\numberwithin{equation}{section}
\newtheoremstyle{plainNoItalics}{}{}{\normalfont}{}{\bfseries}{.}{ }{}
\theoremstyle{plain}
\newtheorem{thm}{Theorem}[section]
\theoremstyle{plainNoItalics}
\newtheorem{lem}[thm]{Lemma}
\newtheorem{exa}[thm]{Example}
\def\SS{\mathbf{S}}
\newcommand{\mD}{{\mathcal D}}
\newcommand{\mtD}{\widetilde{\mathcal D}}
\newcommand{\mP}{{\mathcal P}}
\newcommand{\mO}{{\mathcal O}}
\newcommand{\mttD}{\widetilde{\widetilde{\mathcal D}}}
\newcommand{\be}{\begin{eqnarray}}
\newcommand{\ee}{\end{eqnarray}}
\newcommand{\beno}{\begin{eqnarray*}}
\newcommand{\eeno}{\end{eqnarray*}}
\newcommand{\Rmnum}[1]{\expandafter\@slowromancap\romannumeral #1@}
\newcommand{\rev}[1]{\textcolor{black}{#1}}
\begin{document}

\title{Boundary corrections for kernel approximation to differential operators \thanks{The research of the authors was supported by AFOSR grants FA9550-17-1-0394 and NSF grant DMS-1912183. The first two authors were also supported by AFOSR grants FA9550-24-1-0254, ONR grant N00014-24-1-2242,  and DOE grant DE-SC0023164. The third author was also supported by the National Research Foundation of Korea(NRF) grant funded by the Korea government(MSIT) NRF-2022R1F1A1066389.}}
\date{}
\author{Andrew Christlieb${}^{1,*}$\href{https://orcid.org/0000-0002-5395-5455}{\protect\orcidlogo}, Sining Gong${}^{1,\dagger}$\href{https://orcid.org/0000-0002-8049-2302}{\protect\orcidlogo} and Hyoseon Yang${}^{2}$\href{https://orcid.org/0000-0002-9847-3716}{\protect\orcidlogo}}
\footnotetext[1]{Department of Computational Mathematics, Science and Engineering, Michigan State University, East Lansing, MI, 48824, United States. ${}^*$christli@msu.edu, ${}^\dagger$gongsini@msu.edu; corresponding author.}
\footnotetext[2]{Department of Mathematics, College of Sciences, Kyung Hee University, Seoul 02447, South Korea, hyoseon@khu.ac.kr.}

\maketitle

\begin{abstract}  
 \rev{The kernel-based} approach to operator approximation for partial differential equations has been shown to be  unconditionally stable for linear PDEs and numerically exhibit unconditional stability for non-linear PDEs.  These methods have the same computational cost as an explicit  finite difference scheme but can exhibit order reduction at boundaries.  In previous work on  periodic domains, \cite{christlieb2019kernel, christlieb2020}, order reduction was addressed, yielding high-order accuracy.
 The issue addressed in this work is the elimination of order reduction of the kernel-based approach for a more general set of boundary conditions. 
 Further,  we consider the case of both first and second order operators.  To demonstrate the theory, we provide not only the mathematical proofs but also experimental results by applying various boundary conditions to different types of equations.  The results agree with the theory, demonstrating a systematic path to high order for kernel-based methods on bounded domains. 
\end{abstract}

\noindent
{ \footnotesize{\textbf{Keywords}: Kernel-based method, Boundary conditions, Integral solution, High order accuracy, Unconditionally stable.} }

\pagestyle{myheadings} \thispagestyle{plain}\markboth{}{}
\section{Introduction}

Kernel-based approximation to operators  for partial differential equations has been shown to
be unconditionally stable for linear PDEs and numerically exhibit unconditional stability for non-linear
PDEs \cite{jia2008krylov,causley2014higher,causley2016method,causley2017method,christlieb2020,christlieb2019kernel, christlieb2020kernel}. The potential advantage of such an approach is that they  offer the computational cost of an  explicit finite difference method while providing a path to doing complex geometry with a large time step \cite{causley2014higher,thavappiragasam2020fast,christlieb2020kernel,christlieb2021parallel}.  

% Need to edit
In this paper, we propose a kernel type approximation method to differential operators to solve several types of PDEs with various boundary conditions. In order to develop the kernel-based approximation, it is necessary to understand the Method of Lines Transpose (MOL$^T$) scheme, which is also known as Rothes's method \cite{schemann1998adaptive, salazar2000theoretical, causley2014method}.
Traditional numerical schemes for time dependent PDEs fall under the Method of Lines (MOL) framework, starting with discretization of the spatial operators and solving the resulting initial value problems (IVPs) by integrating with an appropriate time integrator.
Alternatively, the MOL$^T$ approach discretizes the temporal variable first. Then, the resulting boundary value problems (BVPs) are solved at discrete time levels. To address the BVPs, the continuous operator (in space) is analytically inverted using an integral solution. 

Recent advancements in the MOL$^T$ have focused on extending the method to address more general nonlinear partial differential equations (PDEs), such as the nonlinear degenerate convection-diffusion equations \cite{christlieb2020} and the Hamilton-Jacobi equations \cite{christlieb2019kernel, christlieb2020kernel}.  
%The advance to non-linear PDEs came through the recognition that while the PDE may be non-linear, the operators are linear and can be expressed through integral representations.  
%, for which an integral solution is not applicable. 
The main contribution in these papers was to leverage the linearity of a specified \textit{differential operator}, rather than necessitating linearity in the underlying equations. This approach enabled the expression of derivative operators in the problems using kernel representations designed for linear problems.  This perspective  opened up the possibility of \rev{pairing} the unconditionally stable derivative operators for the spatial derivative with any explicit MOL time stepping schema,
%Using the unconditionally stable derivative operators for the spatial derivative, one could still construct the MOL type scheme with an explicit time-stepping method, 
such as the strong-stability-preserving Runge-Kutta (SSP-RK) methods \cite{gottlieb2001strong}. 
By formulating the derivative operators in this manner, the stability of the explicit time marching schemes was enhanced, as a global coupling was introduced via the integral operator.

Starting from the kernel-based high-order approach for simulating the H-J equations with non-periodic boundary treatment \cite{christlieb2019kernel}, this paper constructs generalized differential operators by incorporating appropriate boundary derivative terms, ensuring that the scheme achieves high order for a range of boundary conditions, including periodic, Dirichlet, and Neumann conditions. As part of the generalization introduced in this work, the method is extended to the case of high order kernel-based methods for second derivative operators.
%This paper develops the scheme in the light of the kernel-based high-order approach for simulating the H-J equations with non-periodic boundary treatment \cite{christlieb2019kernel}. Specifically, we construct generalized differential operators by incorporating appropriate boundary derivative terms, ensuring that the scheme achieves high order for a range of boundary conditions, including periodic, Dirichlet, and Neumann conditions. 
The construction of these operators for non-periodic domains is more complex compared to periodic domains, especially when dealing with second derivatives \cite{christlieb2020}.  To achieve high order kernel-based operators for a range of boundary conditions, we extend the ideas in \cite{christlieb2019kernel} and make use of Taylor expansions to derive recurrent relations for general boundary terms that need to be constructed for the method to achieve the desired high order accuracy. 

%However, non-periodic boundaries often result in only first-order accuracy, necessitating modifications to the operators to achieve higher-order accuracy. To address this, we recursively adjust the operators by canceling terms that degrade accuracy and identifying a general pattern for constructing high-order operators.
% the brief summary of how we do the modification: Section 4; 

To the best of our knowledge, this seems to be the first paper to theoretically achieve high-order accuracy for both first and second derivatives under non-periodic boundary conditions for kernel-based methods. Our operators can be applied to a variety of partial differential equations (PDEs), including transport, wave, heat, and convection-diffusion equations.
% the meaning of this paper; the comparison with other papers. 

%% Section numbering was changed!!!
The paper is organized as follows. After we review the kernel-based representations for first and second order derivative operators, %in Section 2, 
we analyze them to obtain the generalized boundary terms for both first and second order derivative operators in Section 2. In Section 3, we present the boundary corrected operators for general boundary conditions and address periodic and non-periodic conditions, such as Dirichlet and Neumann problems. Before we proceed to numerical examples, we demonstrated the implementation of spatial discretization on both uniform and non-uniform grids in Section 4. Then, a collection of numerical examples is presented to demonstrate the performance of the proposed method in Section 5.

%In Section 5, we present our numerical scheme for H-J equations on nonuniform grids with an algorithm flowchart. A collection of numerical examples is presented to demonstrate the performance of the proposed method in Section 4. In Section 5, we demonstrate the proposed scheme by showing a collection of numerical examples.
% Scheme \Rmnum{2} :
\section{Defining Generalized Boundary Terms} \label{sec:newbdyterm}
In this section, we establish that the kernel-based approximation provides a low order approximation to first and second order differential operators on a bounded domain, motivating the rest of the paper.
To begin, we review the structure of the kernel-based approximations, which are derived from the MOL$^T$ framework.

\subsection{Kernel-based differential operators}
We first recall
%We will review 
the kernel-based representation of the first spatial derivative $\partial_x$ and second spatial derivative $\partial_{xx}$, %in this section; see more details in 
following \cite{christlieb2020, causley2014higher}. These representations are derived from the successive convolution of the underlying kernel functions and will play a key role in the proposed scheme. We introduce two function operators $\mathcal{L}_{L}$ and $\mathcal{L}_{R}$ to account for waves traveling in %diffrent
opposite 
directions, downwinding and upwinding, respectively:
\begin{align}\label{def:Lops}
\mathcal{L}_{L} = \mathcal{I} - \frac{1}{\alpha}\partial_{x}, \quad
\mathcal{L}_{R} = \mathcal{I} + \frac{1}{\alpha}\partial_{x},  \quad x \in [a,b],
\end{align} 
where $\mathcal{I}$ is an identity operator and  $\alpha>0$ is a constant. Using an integrating factor, the inversion of these operators can be written by
\begin{align}\label{def:Linvs}
\begin{split}
	& \mathcal{L}_{L}^{-1}[v](x) = I_{L}[v](x) + B_L e^{-\alpha (b - x)}, \\
	& \mathcal{L}_{R}^{-1}[v](x) = I_{R}[v](x) + A_R e^{-\alpha (x - a)},
\end{split}
\end{align}
for a function $v(x)$ defined on $[a,b]$. Here $I_L$ and $I_R$ are the integral operators given by
\begin{align}\label{def:Int_ops}
\begin{split}
	& I_{L}[v](x) = \alpha \int_{x}^{b} e^{-\alpha (s-x)}v(s) \, ds, \\
	& I_{R}[v](x) = \alpha \int_{a}^{x} e^{-\alpha (x-s)}v(s) \, ds,
\end{split}
\end{align}
and $A_{R}$ and $B_{L}$ are the constants determined by the boundary conditions.
Then we introduce the operators
\begin{equation}\label{def:D_ops}
\mathcal{D}_{L} = \mathcal{I} - \mathcal{L}^{-1}_{L}, \quad
\mathcal{D}_{R} = \mathcal{I} - \mathcal{L}^{-1}_{R}, \quad x\in[a,b].
\end{equation}
Each of these can be expanded in a Neumann series:
\begin{align}\label{eq:sum}
\begin{split}
	& \frac{1}{\alpha}\partial_{x}^{+} = \mathcal{I}-\mathcal{L}_{L}
	= \mathcal{L}_{L} (\mathcal{L}^{-1}_{L}-\mathcal{I})
	= -\mathcal{D}_{L}/(\mathcal{I} -\mathcal{D}_{L} )
	= -\sum_{p=1}^{\infty}\mathcal{D}_{L}^{p}, \\
	& \frac{1}{\alpha}\partial_{x}^{-} = \mathcal{L}_{R}-\mathcal{I}
	= \mathcal{L}_{R} (\mathcal{I}-\mathcal{L}_{R}^{-1})
	= \mathcal{D}_{R}/(\mathcal{I} -\mathcal{D}_{R} )
	= \sum_{p=1}^{\infty}\mathcal{D}_{R}^{p}.
\end{split}
	\end{align}
Here $\partial_{x}^{+}$ and $\partial_x^-$ indicate the left-sided and right-sided approximations of the derivative in $x$, respectively, along an interface.

We similarly introduce a function operator $\mathcal{L}_{0}$ for the second derivative case:
\begin{align*}
\mathcal{L}_{0} = \mathcal{I} - \frac{1}{\alpha^2}\partial_{xx}, \quad x \in [a,b],
\end{align*} 
% \label{def:L0_ops}
%where $\mathcal{I}$ is an identity operator and  $\alpha>0$ is a constant. 
which can be viewed as the analogue of $\mathcal{L}_{L}$ and $\mathcal{L}_{R}$ for second order derivatives.
The inverse of $\mathcal{L}_{0}$ can be given by an integral formula with two boundary-associated terms:
\begin{align}\label{def:L0_invs}
\begin{split}
	\mathcal{L}_{0}^{-1}[v](x) = I_{0}[v](x) + B_0 e^{-\alpha (b - x)} + A_0 e^{-\alpha (x - a)},
\end{split}
\end{align}
for a function $v(x)$ defined on $[a,b]$. Here $I_0$ is the integral operator defined as
\begin{align}\label{def:Int0_ops}
\begin{split}
	I_{0}[v](x) = \frac{\alpha}{2} \int_{a}^{b} e^{-\alpha |s-x|}v(s) \, ds, 
\end{split}
\end{align}
and $A_{0}$ and $B_{0}$ are the the constants  determined by boundary conditions imposed.
We then define the operator
\begin{equation}\label{def:D0_ops}
\mathcal{D}_{0} = \mathcal{I} - \mathcal{L}^{-1}_{0}, \quad x\in[a,b],
\end{equation}
and expand it into a Neumann series as well:
	\begin{align}
	\label{eq:D0sum}
	& \frac{1}{\alpha^2}\partial_{xx} = \mathcal{I}-\mathcal{L}_{0}
	= \mathcal{L}_{0} (\mathcal{L}^{-1}_{0}-\mathcal{I})
	= -\mathcal{D}_{0}/(\mathcal{I} -\mathcal{D}_{0} )
	= -\sum_{p=1}^{\infty}\mathcal{D}_{0}^{p}.
	\end{align}
These expansions have been coined successive convolution (or resolvent expansions). In \cite{causley2013method,causley2016method}, we leverage properties of the resolvent expansions to establish stability and convergence for advection, wave and parabolic equations in periodic domains. Here we focus on a generalized approach to consistency for bounded domains and leave stability to later work.
%%%%% Do we need this here?
% Using integration by parts, we can derive the following result:
% \begin{equation}\label{eqn:Iop_relation}
%     \partial_{xx} I_0[v,\alpha](x) = -\alpha^2 v + \alpha^2 I_0[v,\alpha](x).
% \end{equation}

\subsection{Differential operator \texorpdfstring{$\partial_x$}{TEXT} with general boundary terms}
In this subsection, we analyze the accuracy of the kernel-based representations for the first derivative on a bounded domain. 
From the equation \eqref{eq:sum}, the 
differential operator $\partial_x$ can be approximated by
\begin{align}
     \partial_{x}^{+} =-{\alpha} \sum_{p=1}^{\infty}\mathcal{D}_{L}^{p}, \quad \partial_{x}^{-} = \alpha \sum_{p=1}^{\infty}\mathcal{D}_{R}^{p}
\end{align}
using \eqref{def:Linvs} and \eqref{def:D_ops},
\begin{align}\label{eq:D_ops}
\begin{split}
    & \mathcal{D}_{L} = \mathcal{I} - (I_{L}[v](x) + B_L e^{-\alpha (b - x)}), \\
    & \mathcal{D}_{R} = \mathcal{I} - (I_{R}[v](x) + A_R e^{-\alpha (x - a)}), 
\end{split}
\end{align}
where $I_{L}$ and $I_{R}$ are the integral operators in \eqref{def:Int_ops}.
Now, we derive new boundary terms $B_L$ and $A_R$ for generalizing the derivative operators \rev{by revisiting the derivation of kernel-based operators.}

%\begin{comment}
%To do that, 
We firstly consider the linear wave equation
\begin{equation*}%\label{eq:lin}
	\partial_t v - c\partial_x v = 0,
\end{equation*}
and using the backward Euler time discretization, we obtain
\begin{equation}\label{eq:lin_semi}
	(1 - c \Delta t \partial_x ) v^{n+1} = v^n
\end{equation}
where $v^{n} := v(t^n)$ and  $v^{n+1} := v(t^{n+1})$. 
%We define the operator $\mathcal{L}_L$ as 
%\begin{equation}
%	\mathcal{L}_L = \mathcal{I} - \frac{1}{\alpha}\partial_x,
%\end{equation}
Using the operator $\mathcal{L}_L$ from \eqref{def:Lops} with $\alpha := \frac{1}{c\Delta t}$ for $c\geq 0$, %so that 
the equation \eqref{eq:lin_semi} can be compactly stated as 
\begin{equation}\label{eq:L}
	\mathcal{L}_L [v^{n+1}] = v^n.
\end{equation}
Then the numerical solution is updated from
\begin{equation}\label{eq:Linv}
	 v^{n+1}(x) = \alpha \int_x^b e^{-\alpha (y-x)} v^n(y) dy + v^{n+1}(b) e^{-\alpha (b-x)}.
\end{equation}
From the equations \eqref{eq:L} and \eqref{eq:Linv}, we can find inverse of $\mathcal{L}$ operator with general boundary terms:
\begin{align}\label{def:LLinv}
\begin{split}
    \mathcal{L}^{-1}_L[(\cdot)^n] (x)  &=  (\cdot)^{n+1} (x) \\
    &=  \alpha \int_x^b e^{-\alpha (y-x)}(\cdot)^n(y) dy +(\cdot)^{n+1}(b) e^{-\alpha (b-x)}.
\end{split}
\end{align}
Putting $B_L:=B_L[(\cdot)^n]=(\cdot)^{n+1}(b)$ as in \eqref{def:Linvs}, this boundary term can be approximated by a Taylor expansion:
\begin{align}\label{term:B}
\begin{split}
	B_L[v^n] = v^{n+1}(b) &= v^n(b) + \Delta t v^n_t(b) + \mathcal{O}(\Delta t^2) \\
	&= v^n(b) + c \Delta t v^n_x(b) + \mathcal{O}(\Delta t^2) \\
	&= v^n(b) + \frac{1}{\alpha} v^n_x(b) + \mathcal{O}(\Delta t^2).
\end{split}
\end{align}
The operator $\mathcal{L}^{-1}_R$ with a boundary term $A_R$ is derived by a similar process,
\begin{align}\label{def:LRinv}
\begin{split}
    \mathcal{L}^{-1}_R[(\cdot)^n] (x) &=  \alpha \int_a^x e^{-\alpha (x-a)}(\cdot)^n(y) dy + A_R e^{-\alpha (x-a)}
\end{split}
\end{align}
with
\begin{align}\label{term:A}
\begin{split}
	A_R[v^n] = v^{n+1}(a) &= v^n(a) + \Delta t v^n_t(a) + \mathcal{O}(\Delta t^2) \\
	&= v^n(a) - \frac{1}{\alpha} v^n_x(a) + \mathcal{O}(\Delta t^2).
\end{split}
\end{align}

\begin{comment}
and then we can define $\mathcal{D}$ operators in \eqref{def:D_ops}
\begin{align}\label{eq:D_ops}
\begin{split}
    \mathcal{D}_{L}[(\cdot)](x) = (\mathcal{I} - \mathcal{L}^{-1}_{L})[(\cdot)](x) =
    \mathcal{I}[(\cdot)](x) -\alpha \int_x^b e^{-\alpha (y-x)}(\cdot)^n(y) dy - B_L e^{-\alpha (b-x)}, \\
    \mathcal{D}_{R}[(\cdot)](x) = (\mathcal{I} - \mathcal{L}^{-1}_{R})[(\cdot)](x) = 
    \mathcal{I}[(\cdot)](x) -\alpha \int_a^x e^{-\alpha (x-y)}(\cdot)^n(y) dy - A_R e^{-\alpha (x-a)}.
\end{split}
\end{align}
\end{comment}

%In particular, using the definitions of $I_{L}$ and $I_{R}$ in \eqref{def:Int_ops}
Now, we can derive the analysis for the operators $\mathcal{D}_{L}$ and $\mathcal{D}_{R}$ with the general boundary terms. The result is summarized in the following lemma.

\begin{lem}\label{lem_D}
     Suppose $v\in \mathcal{C}^{k+1}[a,b]$ and we set the operator $\mD_{L}$ and $\mD_{R}$ in \eqref{def:D_ops} with general boundary treatment \eqref{term:B} and \eqref{term:A}. 
	Then, we can obtain that
	\begin{align}\label{eq:D_order}
	\begin{split}
		\mathcal{D}_{L}[v](x)
		=& -\sum_{p=1}^{k} \left(\frac{1}{\alpha}\right)^{p}\partial_{x}^{p}v(x) + \sum_{p=2}^{k}\left(\frac{1}{\alpha}\right)^{p}\partial_{x}^{p}v(b) e^{-\alpha(b-x)} -\left(\frac{1}{\alpha}\right)^{k+1}I_{L}[\partial_{x}^{k+1}v](x), \\
		\mathcal{D}_{R}[v](x)
		=& -\sum_{p=1}^{k} \left(-\frac{1}{\alpha}\right)^{p}\partial_{x}^{p}v(x) + \sum_{p=2}^{k}\left(-\frac{1}{\alpha}\right)^{p}\partial_{x}^{p}v(a) e^{-\alpha(x-a)} -\left(-\frac{1}{\alpha}\right)^{k+1}I_{R}[\partial_{x}^{k+1}v](x)
	\end{split}
	\end{align}
\rev{    where $I_{L}$ and $I_{R}$ are the integral operators in \eqref{def:Int_ops}.}
\end{lem}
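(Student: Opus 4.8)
The plan is to reduce both identities to a single integration-by-parts recursion for the integral operators $I_L$ and $I_R$ from \eqref{def:Int_ops}, and then to let the generalized boundary terms \eqref{term:B} and \eqref{term:A} cancel the low-order boundary contributions.

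The first step is to integrate by parts in $I_L[v](x)=\alpha\int_x^b e^{-\alpha(s-x)}v(s)\,ds$, using $-e^{-\alpha(s-x)}$ as the antiderivative of $\alpha e^{-\alpha(s-x)}$, to obtain the one-step relation
\[
I_L[v](x) = v(x) - e^{-\alpha(b-x)}v(b) + \tfrac1\alpha I_L[\partial_x v](x).
\]
Since $v\in\mathcal C^{k+1}[a,b]$, iterating this $k+1$ times (a routine induction on $k$) gives
\[
I_L[v](x) = \sum_{p=0}^{k}\left(\tfrac1\alpha\right)^{p}\!\Big(\partial_x^p v(x) - e^{-\alpha(b-x)}\partial_x^p v(b)\Big) + \left(\tfrac1\alpha\right)^{k+1}\!I_L[\partial_x^{k+1}v](x).
\]
The same computation for $I_R[v](x)=\alpha\int_a^x e^{-\alpha(x-s)}v(s)\,ds$ yields $I_R[v](x) = v(x) - e^{-\alpha(x-a)}v(a) - \tfrac1\alpha I_R[\partial_x v](x)$, and hence the iterated formula with $b\mapsto a$, $e^{-\alpha(b-x)}\mapsto e^{-\alpha(x-a)}$, and $\tfrac1\alpha\mapsto-\tfrac1\alpha$; the extra minus sign, which comes from integrating by parts over $[a,x]$ rather than $[x,b]$, is exactly what produces the alternating powers $(-1/\alpha)^p$ in \eqref{eq:D_order}.

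The second step is substitution. By \eqref{eq:D_ops}, $\mathcal D_L[v](x)=v(x)-I_L[v](x)-B_L e^{-\alpha(b-x)}$ and $\mathcal D_R[v](x)=v(x)-I_R[v](x)-A_R e^{-\alpha(x-a)}$. Inserting the iterated expansions, the $p=0$ interior term is precisely $v(x)$ and cancels the leading $\mathcal I$ contribution; the interior terms $p=1,\dots,k$ assemble into $-\sum_{p=1}^k(\pm1/\alpha)^p\partial_x^p v(\cdot)$ and the tail into $-(\pm1/\alpha)^{k+1}I_{L/R}[\partial_x^{k+1}v]$, matching the first and last terms of \eqref{eq:D_order}. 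What remains is the boundary factor $e^{-\alpha(b-x)}\big(\sum_{p=0}^k\alpha^{-p}\partial_x^p v(b)-B_L\big)$; choosing $B_L = v(b)+\tfrac1\alpha v_x(b)$ as in \eqref{term:B} annihilates precisely the $p=0$ and $p=1$ boundary terms, leaving $\sum_{p=2}^k\alpha^{-p}\partial_x^p v(b)\,e^{-\alpha(b-x)}$, and symmetrically $A_R=v(a)-\tfrac1\alpha v_x(a)$ from \eqref{term:A} clears the $p=0,1$ terms at the left endpoint.

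There is no deep obstacle here — the heart of the argument is the single integration-by-parts identity — so the work is mostly careful bookkeeping. The points needing attention are: getting the sign pattern right for $\mathcal D_R$ (the alternation comes from the direction of the exponential decay relative to the integration interval, not from $\alpha$ itself); being explicit that ``general boundary treatment \eqref{term:B}, \eqref{term:A}'' here means the truncated terms $B_L=v(b)+\tfrac1\alpha v_x(b)$ and $A_R=v(a)-\tfrac1\alpha v_x(a)$, so that \eqref{eq:D_order} is an exact identity rather than only correct up to $\mathcal O(\Delta t^2)$; and checking the base case $k=1$, where the boundary sum is empty and the formula must collapse to $\mathcal D_L[v]=-\tfrac1\alpha\partial_x v-\tfrac1{\alpha^2}I_L[\partial_x^2 v]$.
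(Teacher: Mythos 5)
Your proof is correct and follows essentially the same route as the paper: a single integration-by-parts recursion for $I_L$ (and its mirror for $I_R$), iterated $k$ times, with the truncated boundary terms $B_L=v(b)+\tfrac{1}{\alpha}v_x(b)$ and $A_R=v(a)-\tfrac{1}{\alpha}v_x(a)$ cancelling exactly the $p=0,1$ boundary contributions. As a side remark, your sign in the one-step identity, $I_L[v](x)=v(x)-v(b)e^{-\alpha(b-x)}+\tfrac{1}{\alpha}I_L[v_x](x)$, is the correct one (it is what makes the $k=1$ case collapse to $-\tfrac{1}{\alpha}v_x-\tfrac{1}{\alpha^{2}}I_L[v_{xx}]$), whereas the paper's displayed recursion carries $-\tfrac{1}{\alpha}I_L[v_x]$, which appears to be a typo inconsistent with the stated conclusion.
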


\begin{proof}
	%	\old{Here, we only give the proof for the case of $\mathcal{D}_{0}$. For $\mathcal{D}_{L}$ and $\mathcal{D}_{R}$, the proof can be established by a similar idea.}
	%	\new{
	We consider the case of $\mathcal{D}_{L}$.	Using the definition of $I_L$ and integration by parts, we have
%\begin{align*}
%	I_{L}[v] =& -\frac{1}{\alpha}I_{L}[v_{x}](x) + v(x) - v(b)e^{-\alpha(b-x)},  \\
%	B_{L}[v] = & -\frac{1}{\alpha}B_{L}[v_{x}] + v(b).
%\end{align*} 
%	Therefore, we have
%	\begin{align*}
%	\mathcal{D}_{L}[v](x)
%	= -\sum_{p=1}^{k} \left(\frac{1}{\alpha}\right)^{p}\partial_{x}^{p}v(x) + \sum_{p=2}^{k}\left(\frac{1}{\alpha}\right)^{p}\partial_{x}^{p}v(b) e^{-\alpha(b-x)} -\left(\frac{1}{\alpha}\right)^{k+1}I_{L}[\partial_{x}^{k+1}v](x).
%	\end{align*}
\begin{align*}
\textcolor{blue}{I_L[v](x) = \frac{1}{\alpha} I_L[v'](x) + v(x) - v(b) e^{-\alpha(b - x)}.}
\end{align*}
\rev{This identity allows us to express \( \mD_L[v](x) = v(x) - \mathcal{L}_L^{-1}[v](x) \) in terms of derivatives and boundary contributions.
By repeating this process recursively, we obtain}
\begin{align*}
\mD_L[v](x) = -\sum_{p=1}^k \left( \frac{1}{\alpha} \right)^p \partial_x^p v(x) + \sum_{p=2}^k \left( \frac{1}{\alpha} \right)^p \partial_x^p v(b) e^{-\alpha(b-x)} - \left( \frac{1}{\alpha} \right)^{k+1} I_L[\partial_x^{k+1}v](x).    
\end{align*}

For $\mathcal{D}_{R}$, the proof can be established by a similar process and the lemma is proved.  

\end{proof}

% \end{proof}

\subsection{Differential operator \texorpdfstring{$\partial_{xx}$}{TEXT} with general boundary terms}
Similar to the previous section, from the linear heat equation
\begin{equation}\label{eq:l0in}
	\partial_{t} v - c^2\partial_{xx} v = 0,
\end{equation}
using the backward Euler time discretization, we obtain
\begin{equation}\label{eq:l0in_semi}
	(1 - (c^2\Delta t) \partial_{xx} ) v^{n+1} = v^n
 %(\frac{v^{n+1} + v^{n-1}}{2})  = v^n
\end{equation}
where $v^{n} := v(t^n)$ and  $v^{n+1} := v(t^{n+1})$. 
We define the operator $\mathcal{L}_0$ as 
\begin{equation}
	\mathcal{L}_0 = \mathcal{I} - \frac{1}{\alpha^2}\partial_{xx},
\end{equation}
with $\alpha := \frac{1}{c\sqrt{\Delta t}}$ so that 
the equation \eqref{eq:l0in_semi} can be compactly stated as 
\begin{equation}\label{eq:0}
	\mathcal{L}_0 [v^{n+1}] = v^n.
\end{equation}
By doing integration by parts twice and we have the numerical solution updated from
\[
v^{n+1}(x) = \frac{\alpha}{2} \int_a^b e^{-\alpha |y-x|} v^n(y) dy + \frac{1}{2}(v^{n+1}(b)+\frac{1}{\alpha}v^{n+1}_x(b)) e^{-\alpha (b-x)} + \frac{1}{2}(v^{n+1}(a)-\frac{1}{\alpha}v^{n+1}_x(a)) e^{-\alpha (x-a)},
\]
then we can have the following:
\begin{equation}\label{eq:0inv}
	 v^{n+1}(x) = \frac{\alpha}{2} \int_a^b e^{-\alpha |y-x|} v^n(y) dy + B_0 e^{-\alpha (b-x)} + A_0 e^{-\alpha (x-a)} = I_0[v^{n}](x)+ B_0 e^{-\alpha (b-x)} + A_0 e^{-\alpha (x-a)},
\end{equation}
where the general boundary operator is:
\begin{align}\label{gen_bdy_AB0}
\begin{split}
     B_0 & = \frac{1}{2}(v^{n+1}(b)+\frac{1}{\alpha}v^{n+1}_x(b)) \\
    A_0 & = \frac{1}{2}(v^{n+1}(a)-\frac{1}{\alpha}v^{n+1}_x(a)).
\end{split}
\end{align}
Now depending on the given boundary information we could furthermore provide $A_0$ and $B_0$ in different format. For example, for the Dirichlet boundary condition, we have:
\begin{align}\label{Dir_bdy_AB0}
\begin{split}
   B_0 & =  \frac{1}{1-\mu^2} (v^{n+1}(b) - \mu v^{n+1}(a) + \mu  I_0[v^{n}](a) -  I_0[v^{n}](b) )\\
   A_0 &=  \frac{1}{1-\mu^2} (v^{n+1}(a) - \mu v^{n+1}(b) + \mu I_0[v^{n}](b) -   I_0[v^{n}](a) ),
\end{split}
\end{align}
with $\mu = e^{-\alpha (b-a)}$; while for the Neumann boundary condition, we have:
\begin{align}\label{Neu_bdy_AB0}
\begin{split}
   B_0 & =  \frac{1}{\alpha(\mu^2-1)} (\mu v_x^{n+1}(a) - v_x^{n+1}(b) - \mu \alpha I_0[v^{n}](a) -  \alpha I_0[v^{n}](b) )\\
   A_0 &=  \frac{1}{\alpha(\mu^2-1)} (v_x^{n+1}(a) - \mu v_x^{n+1}(b) - \mu \alpha I_0[v^{n}](b) -  \alpha I_0[v^{n}](a) ).
\end{split}
\end{align}
From the equations \eqref{eq:0} and \eqref{eq:0inv}, we can find definition of $\mathcal{L}$ operator:
\begin{align}\label{def:L0inv}
\begin{split}
    \mathcal{L}^{-1}_0[(\cdot)^n] (x)  &=  (\cdot)^{n+1} (x) \\
    &=  \frac{\alpha}{2}\int_a^b e^{-\alpha |y-x|}(\cdot)^n (y) dy + A_0 e^{-\alpha (x-a)} + B_0 e^{-\alpha (b-x)}
\end{split}
\end{align}
% where $B_0:=B_0[(\cdot)^n]=(\cdot)^{n+1}(b)$ and $A_0:=A_0[(\cdot)^n]=(\cdot)^{n+1}(a)$.
Now the boundary term can be approximated by a Taylor expansion and the equation \eqref{eq:l0in}:
\begin{align}\label{term:B0}
\begin{split}
	v^{n+1}(b) &= v^n(b) + (\Delta t) v^n_{t}(b) + \mathcal{O}(\Delta t^2) \\
	&= v^n(b) + (c^2 \Delta t) v^n_{xx}(b) + \mathcal{O}(\Delta t^2) \\
	&= v^n(b) + \frac{1}{\alpha^2} v^n_{xx}(b) + \mathcal{O}(\Delta t^2),
\end{split}
\end{align}
and we also have similar result for $v^{n+1}(a)$. 
Then we can define $\mathcal{D}$ operators in \eqref{def:D0_ops}
\begin{align}\label{eq:D0_ops}
\begin{split}
    \mathcal{D}_{0}[(\cdot)](x) = (\mathcal{I} - \mathcal{L}^{-1}_{0})[(\cdot)](x) =
    (\cdot)(x) - \frac{\alpha}{2} \int_a^b e^{-\alpha |y-x|}(\cdot)^n(y) dy - B_0 e^{-\alpha (b-x)}- A_0 e^{-\alpha (x-a)}.
\end{split}
\end{align}

\begin{lem}\label{lem_D2}
  Suppose $v\in \mathcal{C}^{2k+2}[a,b]$ and we set the operator $\mD_{0}$ in \eqref{def:D0_ops} with general boundary treatment \eqref{gen_bdy_AB0} with \eqref{term:B0}. 
	Then, we can obtain that
	\begin{align}\label{eq:D0_order} 
	\begin{split}
		\mathcal{D}_{0}[v](x)
        & =  -\sum\limits_{p=1}^{k} (\frac{1}{\alpha})^{2p} \partial_{x}^{2p}v(x) - (\frac{1}{\alpha})^{2k+2} I_{0}[\partial_{x}^{2k+2}v](x) \\
      & \quad + \frac{1}{2}\sum\limits_{p=4}^{2k+1} (-\frac{1}{\alpha})^{p} \partial_{x}^{p}v(a) e^{-\alpha(x-a)}
     + \frac{1}{2}\sum\limits_{p=4}^{2k+1} (\frac{1}{\alpha})^{p} \partial_{x}^{p}v(b) e^{-\alpha(b-x)}
		% =& -\sum_{p=1}^{k} \left(\frac{1}{\alpha}\right)^{2p}\partial_{x}^{2p}v(x) \\
  %       & -\sum_{p=2}^{k}\left(\left(\frac{1}{\alpha}\right)^{2p}\frac{\mu\partial_{x}^{2p}v(a)-\partial_{x}^{2p}v(b)}{1-\mu^2} e^{-\alpha(b-x)} + \left(\frac{1}{\alpha}\right)^{2p}\frac{\mu\partial_{x}^{2p}v(b)-\partial_{x}^{2p}v(a)}{1-\mu^2} e^{-\alpha(x-a)}\right) \\
  %       & -\left(\frac{1}{\alpha}\right)^{2k+2}I_{0}[\partial_{x}^{2k+2}v](x) - \left(\frac{1}{\alpha}\right)^{2k+2}\frac{\mu I_{0}[\partial_{x}^{2k+2}v](a)-I_{0}[\partial_{x}^{2k+2}v](b)}{1-\mu^2} e^{-\alpha(b-x)}\\
  %       & - \left(\frac{1}{\alpha}\right)^{2k+2}\frac{\mu I_{0}[\partial_{x}^{2k+2}v](b)-I_{0}[\partial_{x}^{2k+2}v](a)}{1-\mu^2} e^{-\alpha(x-a)}
	\end{split}
	\end{align}
\rev{where $I_{0}$ is the integral operator in \ref{def:Int0_ops}.}
\end{lem}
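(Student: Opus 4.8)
The plan is to follow the route of Lemma~\ref{lem_D}, replacing the first-order integration-by-parts identities for $I_L,I_R$ by a single second-order identity for $I_0$, and then tracking exactly which boundary orders the general treatment of $A_0,B_0$ already corrects.

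First I would establish the basic identity obtained by integrating by parts twice in $\tfrac{\alpha}{2}\int_a^b e^{-\alpha|s-x|}v(s)\,ds$ (splitting the integral at $s=x$, or equivalently writing $I_0=\tfrac12(I_L+I_R)$ and combining the two first-order identities used for Lemma~\ref{lem_D}):
\begin{align*}
I_0[v](x) &= v(x) + \tfrac{1}{\alpha^2} I_0[\partial_{xx}v](x) \\
&\quad - \tfrac12\big(v(b)+\tfrac1\alpha v_x(b)\big) e^{-\alpha(b-x)} - \tfrac12\big(v(a)-\tfrac1\alpha v_x(a)\big) e^{-\alpha(x-a)}.
\end{align*}
Applying this recursively $k$ times — this is where $v\in\mathcal{C}^{2k+2}[a,b]$ is used — expands $I_0[v]$ as $\sum_{m=0}^{k}(\tfrac1\alpha)^{2m}\partial_x^{2m}v(x)$, plus the remainder $(\tfrac1\alpha)^{2k+2}I_0[\partial_x^{2k+2}v](x)$, plus boundary layers whose coefficients, after pairing $(v^{(2m)},\tfrac1\alpha v^{(2m+1)})$ at each endpoint, collapse into $-\tfrac12\sum_{p=0}^{2k+1}(\tfrac1\alpha)^{p}\partial_x^{p}v(b)\,e^{-\alpha(b-x)}$ and $-\tfrac12\sum_{p=0}^{2k+1}(-\tfrac1\alpha)^{p}\partial_x^{p}v(a)\,e^{-\alpha(x-a)}$.

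Next I would pin down the finite boundary data assigned by \eqref{gen_bdy_AB0} together with \eqref{term:B0}: from $\mathcal{L}_0[v^{n+1}]=v^n$, i.e. $v^{n+1}=v+\tfrac1{\alpha^2}\partial_{xx}v^{n+1}$, and its $x$-derivative, the truncation \eqref{term:B0} replaces $v^{n+1}(b)\mapsto v(b)+\tfrac1{\alpha^2}v_{xx}(b)$ and $v^{n+1}_x(b)\mapsto v_x(b)+\tfrac1{\alpha^2}v_{xxx}(b)$, so that $B_0=\tfrac12\sum_{p=0}^{3}(\tfrac1\alpha)^{p}\partial_x^{p}v(b)$ and $A_0=\tfrac12\sum_{p=0}^{3}(-\tfrac1\alpha)^{p}\partial_x^{p}v(a)$. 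Substituting the iterated identity and these values into $\mathcal{D}_0[v]=v-I_0[v]-B_0 e^{-\alpha(b-x)}-A_0 e^{-\alpha(x-a)}$, the term $v(x)$ is cancelled by the $m=0$ volume term, the $p=0,1,2,3$ boundary layers are cancelled by $-B_0 e^{-\alpha(b-x)}-A_0 e^{-\alpha(x-a)}$, and what survives is precisely \eqref{eq:D0_order}. This also explains why the residual boundary sums begin at $p=4$ (in contrast to the $p=2$ start in Lemma~\ref{lem_D}): here the treatment of $A_0,B_0$ already supplies four orders of correction.

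The routine part is the bookkeeping; the two places that need care are getting the sign in the $I_0$ identity right — the $I_0[\partial_{xx}v]$ term carries $+\tfrac1{\alpha^2}$, not $-\tfrac1{\alpha^2}$, precisely because $I_0$ and $\mathcal{L}^{-1}_0$ differ by the exponential boundary layers — and checking that differentiating $v^{n+1}-\tfrac1{\alpha^2}v^{n+1}_{xx}=v$ in $x$ is compatible with the truncation \eqref{term:B0}, so that the boundary recursion for $B_0,A_0$ closes order by order. The latter is the real content of why the precise residual sums in \eqref{eq:D0_order} come out as stated, and I expect it to be the main obstacle.
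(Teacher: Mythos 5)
Your proposal is correct and follows essentially the same route as the paper: the twice-integrated-by-parts identity $I_0[v]=v+\alpha^{-2}I_0[v_{xx}]-\tfrac12(v(a)-\alpha^{-1}v_x(a))e^{-\alpha(x-a)}-\tfrac12(v(b)+\alpha^{-1}v_x(b))e^{-\alpha(b-x)}$, iterated $k$ times, followed by the observation that \eqref{gen_bdy_AB0} with \eqref{term:B0} gives $B_0=\tfrac12\sum_{p=0}^{3}(\tfrac1\alpha)^p\partial_x^p v(b)$ and $A_0=\tfrac12\sum_{p=0}^{3}(-\tfrac1\alpha)^p\partial_x^p v(a)$, which cancels exactly the $p=0,\dots,3$ boundary layers and leaves the residual sums starting at $p=4$. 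Your sign check on the $+\alpha^{-2}I_0[v_{xx}]$ term and the remark about differentiating the truncated boundary relation match the paper's computation.
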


\begin{proof}
 We know that 
 \begin{align*}
     I_0[v](x) & = \frac{1}{\alpha^2}I_0[v_{xx}](x) + v(x) - \frac{1}{2}(v(a)-\frac{1}{\alpha}v_x(a))e^{-\alpha(x-a)} - \frac{1}{2}(v(b)+\frac{1}{\alpha}v_x(b))e^{-\alpha(b-x)}
     \end{align*}% = ... \\
\rev{and by repeating the integration by parts recursively,}
\begin{align*}
I_0[v](x) 
     & = \sum\limits_{p=0}^{k} (\frac{1}{\alpha})^{2p} \partial_{x}^{2p}v(x)  + (\frac{1}{\alpha})^{2k+2} I_{0}[\partial_{x}^{2k+2}v](x)  \\
     & \quad 
     - \frac{1}{2}\sum\limits_{p=0}^{2k+1} (-\frac{1}{\alpha})^{p} \partial_{x}^{p}v(a) e^{-\alpha(x-a)}
     - \frac{1}{2}\sum\limits_{p=0}^{2k+1} (\frac{1}{\alpha})^{p} \partial_{x}^{p}v(b) e^{-\alpha(b-x)}.
 \end{align*}
 Then, 
 \begin{align*}
     \mD_0[v](x) & = v(x) - I_0[v](x) - A_0e^{-\alpha(x-a)} - B_0e^{-\alpha(b-x)} \\
     & = -\sum\limits_{p=1}^{k} (\frac{1}{\alpha})^{2p} \partial_{x}^{2p}v(x)  - (\frac{1}{\alpha})^{2k+2} I_{0}[\partial_{x}^{2k+2}v](x)  \\
     & \quad + \left(-A_0+ \frac{1}{2}\sum\limits_{p=0}^{2k+1} (-\frac{1}{\alpha})^{p} \partial_{x}^{p}v(a)\right) e^{-\alpha(x-a)}
     + \left(-B_0+ \frac{1}{2}\sum\limits_{p=0}^{2k+1} (\frac{1}{\alpha})^{p} \partial_{x}^{p}v(b) \right)e^{-\alpha(b-x)}.
 \end{align*}
 Note that with $v = v^n(x)$,
 \begin{align*}
     -B_0 & + \frac{1}{2}\sum\limits_{p=0}^{2k+1} (\frac{1}{\alpha})^{p} \partial_{x}^{p}v(b) = -\frac{1}{2}\sum\limits_{p=0}^{3} (\frac{1}{\alpha})^{p} \partial_{x}^{p}v(b) + \frac{1}{2}\sum\limits_{p=0}^{2k+1} (\frac{1}{\alpha})^{p} \partial_{x}^{p}v(b) =  \frac{1}{2}\sum\limits_{p=4}^{2k+1} (\frac{1}{\alpha})^{p} \partial_{x}^{p}v(b), \\
     -A_0 & +  \frac{1}{2}\sum\limits_{p=0}^{2k+1} (-\frac{1}{\alpha})^{p} \partial_{x}^{p}v(a) = \frac{1}{2}\sum\limits_{p=4}^{2k+1} (-\frac{1}{\alpha})^{p} \partial_{x}^{p}v(a).
 \end{align*}
Therefore, we get the representation of the operator:
 \begin{align*}
     \mD_0[v](x) & = v(x) - I_0[v](x) - A_0e^{-\alpha(x-a)} - B_0e^{-\alpha(b-x)} \\
     & = -\sum\limits_{p=1}^{k} (\frac{1}{\alpha})^{2p} \partial_{x}^{2p}v(x) - (\frac{1}{\alpha})^{2k+2} I_{0}[\partial_{x}^{2k+2}v](x) \\
      & \quad + \frac{1}{2}\sum\limits_{p=4}^{2k+1} (-\frac{1}{\alpha})^{p} \partial_{x}^{p}v(a) e^{-\alpha(x-a)}
     + \frac{1}{2}\sum\limits_{p=4}^{2k+1} (\frac{1}{\alpha})^{p} \partial_{x}^{p}v(b) e^{-\alpha(b-x)}.
 \end{align*}
\end{proof}

\section{Generalized Kernel Based Schemes}
In this section, we establish recurrent relations  for correcting the order of the kernel-based approximation.  
These corrections ensure that the kernel-based approximation gives a consistent high order representation on a bounded domain. 
%consistency for the kernel-based approximation to both the first and second derivative operators is established.

\subsection{Main idea of the approach}
Now we examine how to make these operators to be high order. We start by observing that directly using these integral operators as the are initially designed fails, and we need to develop a modification to achieve high order.  The approach taken here starts \rev{from} considering the case $k=2$.  Consider:
	\begin{align*}
	\mathcal{D}_{L}[\phi](x)
	= - \frac{1}{\alpha}\phi'(x)- \frac{1}{\alpha^2}\phi''(x) + \frac{1}{\alpha^2}\phi''(b) e^{-\alpha(b-x)} - \frac{1}{\alpha^{3}}I_{L}[\phi'''](x).
	\end{align*}
Instead of adding successively defined terms $\mathcal{D}_{L}^p=\mathcal{D}_{L}[\mathcal{D}_{L}^{p-1}]$, $p\geq 2$, we introduce a new approximation,
	$$ \mtD_{L}[\phi](x) :=\mathcal{D}_{L}[\phi](x) - \frac{1}{\alpha^2}\phi''(b) e^{-\alpha(b-x)} $$
and apply the $\mathcal{D}_L$ operator to this approximation to get,
\begin{align*} %\widetilde
\mathcal{D}_L[\mtD_{L}[\phi]](x) = \frac{1}{\alpha^2}\phi''(x)+\frac{1}{\alpha^{3}}I_{L}[\phi'''](x)+\frac{1}{\alpha^{3}}I_{L}^2[\phi'''](x).
\end{align*}
We observe that this new compound approximation  gives a  higher order representation for $\phi'(x)$,
%Then we obtain higher order approximation to $\phi'(x)$ with the modified partial sum:
\begin{align*}
\mathcal{D}_L[\phi](x) + \mathcal{D}_L[\mtD_{L}[\phi]](x) - \frac{1}{\alpha^2}\phi''(b) e^{-\alpha(b-x)} = - \frac{1}{\alpha}\phi'(x) + \mathcal{O}(\frac{1}{\alpha^3}).
\end{align*}
We generalize this modification to get a higher order in the following.

\subsection{Modification of derivative operators}%\label{sec:modify}

Using the general boundary treatments, we can modify the partial sums for the first derivative operators.
\begin{subequations}\label{eq:mod_sum}
\begin{align} %\widetilde
\phi_{x}^{+}(x) &\approx {\mP}^{L}_{k}[\phi](x)=
-\alpha \left( \mD_{L}[\phi](x)+ \sum\limits_{p=2}^{k}\mtD_{L}^p[\phi](x) \right), \\
\phi_{x}^{-}(x) &\approx{\mP}^{R}_{k}[\phi](x)=
\alpha \left( \mD_{R}[\phi](x)+\sum\limits_{p=2}^{k}\mtD_{R}^p[\phi](x)  \right) \\
\phi_{xx}(x) &\approx{\mP}^{0}_{k}[\phi](x)=
-\alpha^2  \left( \mD_{0}[\phi](x)+\sum\limits_{p=2}^{k}\mtD_{0}^p[\phi](x) \right)
\label{eq:mod_sum_0}
\end{align}
\end{subequations}
where ${\mtD}_{L}$, ${\mtD}_{R}$ and ${\mtD}_{0}$ are given as
\begin{subequations}\label{eq:expression}
	\begin{align}
	& \left\{\begin{array}{ll}\label{eq:expressionL}
    \mttD_{L}[\phi](x) =\mD_{L}[\phi](x) +  \sum_{m=2}^{k} c_{1,m} \left(\frac{1}{\alpha}\right)^{m}\partial_{x}^{m}\phi(b) e^{-\alpha(b-x)},\\
	\mttD_{L}^p[\phi](x) =\mtD_{L}^p[\phi](x) +  \sum_{m=p}^{k} c_{p,m} \left(\frac{1}{\alpha}\right)^{m}\partial_{x}^{m}\phi(b) e^{-\alpha(b-x)}, \quad p\geq 2,\\
	\displaystyle \mtD_{L}^p[\phi](x) =\mathcal{D}_{L}[\mttD_{L}^{p-1}][\phi](x), \quad p\geq 2,\\
    c_{1,m} = -1, c_{p,m} = -\sum\limits_{i=p-1}^{m-1} c_{p-1,i},
	\end{array}
	\right.\\
    & \left\{\begin{array}{ll}\label{eq:expressionR}
    \mttD_{R}[\phi](x) =\mD_{R}[\phi](x) +  \sum_{m=2}^{k} c_{1,m} \left(-\frac{1}{\alpha}\right)^{m}\partial_{x}^{m}\phi(b) e^{-\alpha(x-a)},\\
	\mttD_{R}^p[\phi](x) =\mtD_{R}^p[\phi](x) +  \sum_{m=p}^{k} c_{p,m} \left(-\frac{1}{\alpha}\right)^{m}\partial_{x}^{m}\phi(a) e^{-\alpha(x-a)},\quad p\geq 2,\\
	\displaystyle \mtD_{R}^p[\phi](x) =\mathcal{D}_{R}[\mttD_{R}^{p-1}][\phi](x), \quad p\geq 2,\\
    c_{1,m} = -1, c_{p,m} = -\sum\limits_{i=p-1}^{m-1} c_{p-1,i},
	\end{array}
	\right.\\
    & \left\{\begin{array}{ll}\label{eq:expression0}
	\mttD_{0}^p[\phi](x) = \mtD_{0}^p[\phi](x) + \frac{1}{2}\sum\limits_{m=p+1}^{k} c_{p,m-1} \left( \left(\frac{1}{\alpha}\right)^{2m} \partial_{x}^{2m}\phi(a) + \left(-\frac{1}{\alpha}\right)^{2m+1} \partial_{x}^{2m+1}\phi(a)\right) e^{-\alpha(x-a)}
      \\ 
      \qquad \qquad  \qquad \qquad \quad + \frac{1}{2} \sum\limits_{m=p+1}^{k} c_{p,m-1} \left( \left(\frac{1}{\alpha}\right)^{2m} \partial_{x}^{2m}\phi(b) + \left(\frac{1}{\alpha}\right)^{2m+1} \partial_{x}^{2m+1}\phi(b)\right) e^{-\alpha(b-x)},\\
     \qquad \qquad  \qquad \qquad \qquad \qquad \qquad \qquad \qquad \qquad \qquad \qquad \qquad \qquad \qquad \qquad \qquad \qquad p\geq 1,\\
	\displaystyle \mtD_{0}^p[\phi](x) =\mathcal{D}_{0}[\mttD_{0}^{p-1}][\phi](x), \quad p\geq 2,\\
    c_{1,m} = -1, c_{p,m} = -\sum\limits_{i=p-1}^{m-1} c_{p-1,i},
	\end{array}
	\right.
	\end{align}
\end{subequations}
In developing the modified sums for these operators, we assume that the derivatives of $\phi$ have been constructed at the boundaries, i.e., $\partial_{x}^{m}\phi(a)$ and $\partial_{x}^{m}\phi(b)$, $m\geq1$. %\SG{
In practice, for different PDEs, using inverse Lax-Wendroff (ILW) method gives us either odd-order derivatives, or even-order derivatives or all the derivatives of $\phi$; see for example \cite[Section 2.2.3]{christlieb2019kernel}. However, there seems to be missing information in the case of $\mtD_0$. If we have only odd-order or even-order derivatives, the difference of those derivatives is a high-order affect so we can make use of the information we have as a replacement of the missing terms. We will provide more details when we discuss the implementation in Sec. \ref{sec:Numerical Integration Strategies}. %}
% \textcolor{red}{
% In practice, with  inverse Lax-Wendroff (ILW) method, we could have odd-order, even-order or all the derivatives of $\phi$; see for example \cite[Section 2.2.3]{christlieb2019kernel}. So based on different information you have, you may need to adjust the general boundary operators. }
% \textcolor{blue}{AJC: I dont know what you are trying to say here... this needs work.}

\subsection{Convergence analysis}%\label{sec:modify}
The modified partial sum \eqref{eq:mod_sum} is constructed so that it agrees with the derivative values at the boundary, to preserve consistency with the boundary condition imposed on $\phi$. 
To verify the accuracy of the modified sum for the approximation to $\partial_x$, we start with the following lemma.
\begin{lem}\label{lem:D_til}
%     We first give another equivalent definition of $\mtD_{L}^{p}[\phi](x)$ for $2 \le p \le k$ as the follows. 
For $2 \le p \le k$,
  \begin{align}\label{eqn:DtLAnotherform}
	 \mtD_{L}^{p}[\phi](x) = & \sum_{m=p}^{k}c_{p,m}\left(\frac{1}{\alpha}\right)^{m} \left(\partial^{m}_{x}\phi(x) -\partial_{x}^{m}\phi(b) e^{-\alpha(b-x)} \right) \nonumber\\
 & + (-1)^p \left(\frac{1}{\alpha}\right)^{k+1}  \sum\limits_{m = 1}^{p} |c_{p-m+1,k-m+1}|(I_L)^m[\partial_{x}^{k+1}\phi](x).
\end{align}
\end{lem}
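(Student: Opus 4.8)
The plan is to prove \eqref{eqn:DtLAnotherform} by induction on $p$, starting from the base case $p=2$ and using the recursive definition $\mtD_{L}^{p}[\phi] = \mathcal{D}_{L}[\mttD_{L}^{p-1}][\phi]$ together with the closed form for $\mathcal{D}_L$ from Lemma \ref{lem_D}. The crucial structural observation is that both the ``bulk'' term $\partial_x^m\phi(x)$ and the boundary term $\partial_x^m\phi(b)e^{-\alpha(b-x)}$ appear with the \emph{same} coefficient $c_{p,m}(1/\alpha)^m$ in the first sum of \eqref{eqn:DtLAnotherform}; this pairing is exactly what the correction terms $c_{p,m}(1/\alpha)^m\partial_x^m\phi(b)e^{-\alpha(b-x)}$ in the definition of $\mttD_L^p$ are designed to produce, and it is what makes the induction close.

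First I would establish the base case. Applying Lemma \ref{lem_D} (with $k$ there replaced by $k$) gives $\mathcal{D}_L[\phi](x)$ explicitly, and then $\mttD_L[\phi](x) = \mathcal{D}_L[\phi](x) + \sum_{m=2}^k c_{1,m}(1/\alpha)^m \partial_x^m\phi(b)e^{-\alpha(b-x)}$ with $c_{1,m}=-1$; substituting this and applying $\mathcal{D}_L$ once more (using linearity of $\mathcal{D}_L$ and the fact that $\mathcal{D}_L$ applied to $e^{-\alpha(b-x)}$ and to $I_L[\cdot]$ can be computed from the same integration-by-parts identities $I_L[v] = -\tfrac1\alpha I_L[v_x] + v(x) - v(b)e^{-\alpha(b-x)}$ used in the proof of Lemma \ref{lem_D}) should reproduce \eqref{eqn:DtLAnotherform} for $p=2$, where one checks that $c_{2,m} = -\sum_{i=1}^{m-1}c_{1,i} = m-1$ and that the $I_L$-iterate coefficients $|c_{2-m+1,k-m+1}|$ for $m=1,2$ come out correctly.

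For the inductive step, assume \eqref{eqn:DtLAnotherform} holds for $p-1$. Then $\mttD_L^{p-1}[\phi](x)$ equals the right side of \eqref{eqn:DtLAnotherform} (at level $p-1$) plus $\sum_{m=p-1}^k c_{p-1,m}(1/\alpha)^m\partial_x^m\phi(b)e^{-\alpha(b-x)}$; the effect of adding this correction is precisely to cancel the ``$-\partial_x^m\phi(b)e^{-\alpha(b-x)}$'' halves of the paired terms, leaving $\mttD_L^{p-1}[\phi](x) = \sum_{m=p-1}^k c_{p-1,m}(1/\alpha)^m\partial_x^m\phi(x) + (-1)^{p-1}(1/\alpha)^{k+1}\sum_{m=1}^{p-1}|c_{p-m,k-m+1}|(I_L)^m[\partial_x^{k+1}\phi](x)$. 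Now I would apply $\mathcal{D}_L$ term by term: $\mathcal{D}_L[\partial_x^m\phi](x) = -\tfrac1\alpha\partial_x^{m+1}\phi(x) - \cdots$, which shifts indices up by one and generates new boundary and $I_L$ terms, and $\mathcal{D}_L[(I_L)^m[\partial_x^{k+1}\phi]] $ produces $(I_L)^{m+1}[\partial_x^{k+1}\phi]$ plus lower-order pieces. Collecting the bulk terms gives $\sum_m \big(-\sum_{i=p-1}^{m-1}c_{p-1,i}\big)(1/\alpha)^m\partial_x^m\phi(x) = \sum_m c_{p,m}(1/\alpha)^m\partial_x^m\phi(x)$ by the defining recursion for $c_{p,m}$, the boundary terms assemble with matching coefficients $c_{p,m}$ (the origin of the pairing), and the $I_L$-iterates reindex to give $\sum_{m=1}^p |c_{p-m+1,k-m+1}|(I_L)^m[\partial_x^{k+1}\phi]$ with the correct sign $(-1)^p$.

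The main obstacle I anticipate is bookkeeping: carefully tracking how each application of $\mathcal{D}_L$ to a term $(1/\alpha)^m\partial_x^m\phi(x)$ spawns a truncation remainder that must be folded into the $I_L$-iterate sum with exactly the right coefficient, and verifying that the telescoping sums over $c_{p-1,i}$ collapse to the closed-form coefficients $c_{p,m}$ and $|c_{p-m+1,k-m+1}|$ claimed in the statement. In particular one must confirm the index ranges in the double sums (the fact that the first sum in \eqref{eqn:DtLAnotherform} starts at $m=p$, consistent with $c_{p,m}=0$ for $m<p$, and that the $I_L^m$ sum runs only up to $m=p$) are preserved under the inductive step. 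None of this requires new ideas beyond repeated use of the integration-by-parts identity and the combinatorial recursion for $c_{p,m}$, but the algebra is the substance of the proof.
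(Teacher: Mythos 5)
Your proposal is correct and follows essentially the same route as the paper's proof: induction on $p$ with base case $p=2$, using the fact that the correction terms in $\mttD_L^{p}$ exactly cancel the boundary-evaluated halves of the paired terms, then applying $\mathcal{D}_L$ via the expansion of Lemma \ref{lem_D} (truncated at the appropriate order) and closing the induction with the recursion $c_{p,m}=-\sum_{i=p-1}^{m-1}c_{p-1,i}$ together with the identity $c_{p,m-1}=c_{p,m}+c_{p-1,m-1}$. The bookkeeping you flag as the main obstacle is indeed the substance of the paper's argument, carried out there by first deriving the closed form for $\partial_x^{\ell}\mttD_L^{p}[\phi]$ and then substituting into Lemma \ref{lem_D}.
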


\begin{proof}
      When $p = 2$, \eqref{eqn:DtLAnotherform} is the same as what we get from \eqref{eq:expressionL}. In fact, with \eqref{eqn:DtLAnotherform}, we have
	\begin{align*}
	\mttD_{L}[\phi](x) =\mtD_{L}[\phi](x) -  \sum_{m=2}^{k}\left(\frac{1}{\alpha}\right)^{m}\partial_{x}^{m}\phi(b) e^{-\alpha(b-x)}
	\end{align*}
	and then we can differentiate it,
	\begin{align*}
	\begin{split}
		\partial_{x}\mttD_L[\phi](x) &= \partial_{x} \left( -\sum_{p=1}^{k} \left(\frac{1}{\alpha}\right)^{p}\partial_{x}^{p}\phi(x) -\left(\frac{1}{\alpha}\right)^{k+1}I_{L}[\partial_{x}^{k+1}\phi](x) \right) \\
		&=	-\sum_{p=1}^{k} \frac{1}{\alpha^{p}}\partial_{x}^{p+1}\phi(x)- \frac{1}{\alpha^{k+1}}\partial_{x}I_L[\partial_{x}^{k+1}\phi](x) \\
		&= 	-\sum_{p=1}^{k-1} \frac{1}{\alpha^{p}}\partial_{x}^{p+1}\phi(x)- \frac{1}{\alpha^{k}}I_L[\partial_{x}^{k+1}\phi](x),
	\end{split}
	\end{align*}
	by using Lemma \ref{lem_D}. Repeating the above process, we can find a general form for $0\leq \ell \leq k$,
	\begin{align*}
		\partial_{x}^{\ell}\mttD_L[\phi](x) = -\sum_{p=1+\ell}^{k} \frac{1}{\alpha^{p-\ell}}\partial_{x}^{p}\phi(x)- \frac{1}{\alpha^{k+1-\ell}}I_L[\partial_{x}^{k+1}\phi](x).
	\end{align*}
	Now again from the equation in Lemma \ref{lem_D} with using $k-1$-th sum instead of $k$-th sum, we apply the operator $\mD_L$ to $\mttD_{L}[\phi]$:
        \begin{align*}
	\mtD_{L}^{2}[\phi](x) & =\mathcal{D}_{L}[\mttD_{L}^{1}][\phi](x)  \\
 & = \sum_{p=2}^{k}(p-1)\left(\frac{1}{\alpha}\right)^{p} \left(\partial^{p}_{x}\phi(x) -\partial_{x}^{p}\phi(b) e^{-\alpha(b-x)} \right) \\
 & \quad + \left(\frac{1}{\alpha}\right)^{k+1} I_L[I_L[\partial_{x}^{k+1}\phi]](x) + (k-1) \left(\frac{1}{\alpha}\right)^{k+1}I_L[\partial_{x}^{k+1}\phi](x),
	\end{align*}
 which is same as \eqref{eqn:DtLAnotherform} with $p = 2$. 

 Now we prove $p > 2$ cases by induction. Assume that for $p \ge 2$, \eqref{eqn:DtLAnotherform} is equivalent from the definition of \eqref{eq:expressionL}, then we consider the $p+1$ case. Since
 \begin{align*}
	\mttD_{L}^{p}[\phi](x)  = & \sum_{m=p}^{k}c_{p,m}\left(\frac{1}{\alpha}\right)^{m} \partial^{m}_{x}\phi(x)+(-1)^p \left(\frac{1}{\alpha}\right)^{k+1}  \sum\limits_{m = 1}^{p} |c_{p-m+1,k-m+1}|(I_L)^m[\partial_{x}^{k+1}\phi](x), 
	\end{align*}
 and then we can differentiate it,
 \begin{align*}
	\begin{split}
		\partial_{x}\mttD_{L}^{p}[\phi](x) &= \partial_{x} \left( \sum_{m=p}^{k}c_{p,m}\left(\frac{1}{\alpha}\right)^{m} \partial^{m}_{x}\phi(x)+(-1)^p \left(\frac{1}{\alpha}\right)^{k+1}  \sum\limits_{m = 1}^{p} |c_{p-m+1,k-m+1}|(I_L)^m[\partial_{x}^{k+1}\phi](x) \right) \\
		&= \sum_{m=p}^{k}c_{p,m}\left(\frac{1}{\alpha}\right)^{m} \partial^{m+1}_{x}\phi(x)+(-1)^p \left(\frac{1}{\alpha}\right)^{k+1}  \sum\limits_{m = 1}^{p} |c_{p-m+1,k-m+1}|\partial_{x}(I_L)^m[\partial_{x}^{k+1}\phi](x)  \\
		&= \sum_{m=p}^{k}c_{p,m}\left(\frac{1}{\alpha}\right)^{m} \partial^{m+1}_{x}\phi(x)+ (-1)^p \left(\frac{1}{\alpha}\right)^{k} |c_{p,m}|(I_L[\partial_{x}^{k+1}\phi](x) - \partial_{x}^{k+1}\phi) \\
        & \quad + (-1)^p \left(\frac{1}{\alpha}\right)^{k}  \sum\limits_{m = 2}^{p} |c_{p-m+1,k-m+1}|\left((I_L)^m[\partial_{x}^{k+1}\phi](x) - (I_L)^{m-1}[\partial_{x}^{k+1}\phi](x)\right) \\ 
        &= \sum_{m=p}^{k-1}c_{p,m}\left(\frac{1}{\alpha}\right)^{m} \partial^{m+1}_{x}\phi(x) + (-1)^p \left(\frac{1}{\alpha}\right)^{k} (I_L)^p [\partial_{x}^{k+1}\phi](x) \\
        & \quad + (-1)^p \left(\frac{1}{\alpha}\right)^{k}  \sum\limits_{m = 1}^{p-1} \left(|c_{p-m+1,k-m+1} + c_{p-m,k-m}|\right)(I_L)^m[\partial_{x}^{k+1}\phi](x) \\
        &= \sum_{m=p}^{k-1}c_{p,m}\left(\frac{1}{\alpha}\right)^{m} \partial^{m+1}_{x}\phi(x) + (-1)^p \left(\frac{1}{\alpha}\right)^{k}  \sum\limits_{m = 1}^{p} |c_{p-m+1,k-m}|(I_L)^m[\partial_{x}^{k+1}\phi](x),
	\end{split}
	\end{align*}
    where we note that if $p$ is even, $|c_{p,m}| = c_{p,m}$ and if $p$ is odd, $|c_{p,m}| = -c_{p,m}$. Note that $c_{p,m-1} = c_{p,m} + C(p-1,m-1)$ and repeating the above process, we can find a general form for $0 \leq \ell \leq k+1-p$,we have   
    \begin{align*}
	\begin{split}
		\partial_{x}^l \mttD_{L}^{p}[\phi](x) &= \sum_{m=p}^{k-\ell}c_{p,m}\left(\frac{1}{\alpha}\right)^{m} \partial^{m+\ell}_{x}\phi(x) \\  
         & \quad + (-1)^p \left(\frac{1}{\alpha}\right)^{k-\ell+1}  \sum\limits_{m = 1}^{p} |c_{p-m+1,k-m-\ell+1}|(I_L)^m[\partial_{x}^{k+1}\phi](x) \\
         & = \sum_{m=p+\ell}^{k}c_{p,m-\ell}\left(\frac{1}{\alpha}\right)^{m-\ell} \partial^{m}_{x}\phi(x) \\  
         & \quad + (-1)^p \left(\frac{1}{\alpha}\right)^{k-\ell+1}  \sum\limits_{m = 1}^{p} |c_{p-m+1,k-m-\ell+1}|(I_L)^m[\partial_{x}^{k+1}\phi](x)
	\end{split}
	\end{align*}
 	Now again from the equation in Lemma \ref{lem_D}, using $k-p$ -th sum instead of $k$-th sum, we apply the operator $\mD_L$ to $\mttD_{L}^p[\phi]$:
        \begin{align*}
	\mtD_{L}^{p+1}[\phi](x) & =\mathcal{D}_{L}[\mttD_{L}^{p}][\phi](x)  \\
 & = -\sum_{\ell=1}^{k-p} \left(\frac{1}{\alpha}\right)^{\ell} \left( \partial_{x}^{\ell} \mttD_{L}^{p}[\phi](x) - \partial_{x}^{\ell}\mttD_{L}^{p}[\phi](b) e^{-\alpha(b-x)} \right) -\left(\frac{1}{\alpha}\right)^{k-p+1}I_{L}[\partial_{x}^{k-p+1} \mttD_{L}^{\ell}[\phi]](x) \\
 & = -\sum_{\ell=1}^{k-p} \left(\frac{1}{\alpha}\right)^{\ell} \sum_{m=p+\ell}^{k}c_{p,m-\ell}\left(\frac{1}{\alpha}\right)^{m-\ell}  \left(\partial^{m}_{x}\phi(x) -  \partial^{m}_{x}\phi(b) e^{-\alpha(b-x)}\right) \\
 & \quad + (-1)^{p+1} \left(\frac{1}{\alpha}\right)^{k+1} \sum_{\ell=1}^{k-p} \sum\limits_{m = 1}^{p} |c_{p-m+1,k-m-\ell+1}|(I_L)^m[\partial_{x}^{k+1}\phi](x)  \\
 & \quad + (-1)^{p+1} \left(\frac{1}{\alpha}\right)^{k+1} \sum\limits_{m = 1}^{p} |c_{p-m+1,p-m}|(I_L)^{m+1}[\partial_{x}^{k+1}\phi](x) \\
 & = \sum_{m=p+1}^{k} c_{p+1,m}  \left(\frac{1}{\alpha}\right)^{m} \left(\partial^{m}_{x}\phi(x) -  \partial^{m}_{x}\phi(b) e^{-\alpha(b-x)}\right) + (-1)^{p+1} \left(\frac{1}{\alpha}\right)^{k+1} (I_L)^{p+1}[\partial_{x}^{k+1}\phi](x)  \\
 & \quad + (-1)^{p+1} \left(\frac{1}{\alpha}\right)^{k+1} \sum_{m=1}^{p} \left|\sum_{i = p-m+1}^{k-m} c_{p-m+1,i} \right| (I_L)^{m}[\partial_{x}^{k+1}\phi](x) \\
 & = \sum_{m=p+1}^{k} c_{p+1,m} \left(\frac{1}{\alpha}\right)^{m} \left(\partial^{m}_{x}\phi(x) -  \partial^{m}_{x}\phi(b) e^{-\alpha(b-x)}\right)  \\
 & \quad + (-1)^{p+1} \left(\frac{1}{\alpha}\right)^{k+1} \left( \sum_{m=1}^{p} |c_{p-m+2,k-m+1}| (I_L)^{m}[\partial_{x}^{k+1}\phi](x) + (I_L)^{p+1}[\partial_{x}^{k+1}\phi](x) \right)\\
 & = \sum_{m=p+1}^{k} c_{p+1,m} \left(\frac{1}{\alpha}\right)^{m} \left(\partial^{m}_{x}\phi(x) -  \partial^{m}_{x}\phi(b) e^{-\alpha(b-x)}\right)  \\
 & \quad + (-1)^{p+1} \left(\frac{1}{\alpha}\right)^{k+1} \left( \sum_{m=1}^{p+1} |c_{p-m+2,k-m+1}| (I_L)^{m}[\partial_{x}^{k+1}\phi](x) \right),
	\end{align*}
 which proves \eqref{eqn:DtLAnotherform}.
\end{proof}

\begin{thm}
 Suppose $\phi\in\mathcal{C}^{k+1}[a,b]$, $k=1, \, 2,\, 3$. Then, the modified partial sums \eqref{eq:mod_sum} satisfy
	\begin{align}
		 \|\partial_{x}\phi(x)-{\mP}^{*}_{k}[\phi](x)\|_{\infty}\leq C \left(\frac{1}{\alpha}\right)^{k} \|\partial_{x}^{k+1}\phi(x)\|_{\infty}
	\end{align}
	where $*$ indicates $L$ and $R$ operators.
\end{thm}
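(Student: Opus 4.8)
The plan is to reduce the theorem to Lemma~\ref{lem:D_til} and then sum a telescoping series. I would treat the $L$ case in detail; the $R$ case is identical after replacing $1/\alpha$ with $-1/\alpha$, the point $b$ with $a$, and $I_L$ with $I_R$. First I would record the $p=1$ term: by Lemma~\ref{lem_D} with the $k$-sum,
\begin{align*}
\mathcal{D}_{L}[\phi](x) = -\sum_{p=1}^{k}\left(\tfrac{1}{\alpha}\right)^{p}\partial_x^{p}\phi(x) + \sum_{p=2}^{k}\left(\tfrac{1}{\alpha}\right)^{p}\partial_x^{p}\phi(b)e^{-\alpha(b-x)} - \left(\tfrac{1}{\alpha}\right)^{k+1}I_L[\partial_x^{k+1}\phi](x),
\end{align*}
so that $\mathcal{D}_L[\phi](x) = -\tfrac{1}{\alpha}\partial_x\phi(x) - \sum_{m=2}^{k}\left(\tfrac1\alpha\right)^m\bigl(\partial_x^m\phi(x) - \partial_x^m\phi(b)e^{-\alpha(b-x)}\bigr) - \left(\tfrac1\alpha\right)^{k+1}I_L[\partial_x^{k+1}\phi](x)$. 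Notice this already has the shape of the $p\ge2$ terms in Lemma~\ref{lem:D_til} with the convention $c_{1,m}=-1$, which is consistent with the recurrence $c_{p,m}=-\sum_{i=p-1}^{m-1}c_{p-1,i}$ and the definition $c_{1,m}=-1$ stated in \eqref{eq:expressionL}.

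Next I would substitute the closed forms into the modified partial sum. Summing $\mathcal{D}_L[\phi] + \sum_{p=2}^{k}\mtD_L^p[\phi]$ and using Lemma~\ref{lem:D_til}, the $\partial_x^m\phi(x) - \partial_x^m\phi(b)e^{-\alpha(b-x)}$ contributions from all $p$ from $1$ to $k$ collect, for each fixed $m$ with $2\le m\le k$, the coefficient $\sum_{p=1}^{m}c_{p,m}\left(\tfrac1\alpha\right)^m$ (the sum stops at $p=m$ since $c_{p,m}=0$ for $p>m$, as Lemma~\ref{lem:D_til} ranges $m\ge p$). The key combinatorial fact is that $\sum_{p=1}^{m}c_{p,m}=0$ for every $m\ge2$: this follows by induction on $m$ from $c_{1,m}=-1$ and $c_{p,m}=-\sum_{i=p-1}^{m-1}c_{p-1,i}$, since $\sum_{p=2}^{m}c_{p,m} = -\sum_{p=2}^{m}\sum_{i=p-1}^{m-1}c_{p-1,i} = -\sum_{q=1}^{m-1}c_{q,m-1}\cdot(\text{number of }p\text{'s})$ rearranges to a telescoping identity forcing the total to cancel the $c_{1,m}=-1$. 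Hence all the interior-derivative terms of order $2,\dots,k$ vanish identically, leaving exactly $-\tfrac1\alpha\partial_x\phi(x)$ plus the accumulated $I_L$-tower terms.

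Then the remaining error is $-\alpha$ times a finite sum of terms of the form $\left(\tfrac1\alpha\right)^{k+1}(I_L)^m[\partial_x^{k+1}\phi](x)$ with bounded integer coefficients (the $|c_{p-m+1,k-m+1}|$ from Lemma~\ref{lem:D_til}, summed over $p$), i.e.\ $\mathcal{O}\!\left(\left(\tfrac1\alpha\right)^{k}\right)$ after the $-\alpha$ prefactor. To finish I would invoke the uniform bound $\|I_L[w]\|_\infty \le \|w\|_\infty$, which is immediate from $I_L[w](x) = \alpha\int_x^b e^{-\alpha(s-x)}w(s)\,ds$ and $\alpha\int_x^b e^{-\alpha(s-x)}ds = 1 - e^{-\alpha(b-x)} \le 1$; iterating gives $\|(I_L)^m[\partial_x^{k+1}\phi]\|_\infty \le \|\partial_x^{k+1}\phi\|_\infty$. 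Collecting constants (which depend only on $k\le3$ through the coefficients $c_{p,m}$) yields $\|\partial_x\phi - \mathcal{P}^{L}_k[\phi]\|_\infty \le C\left(\tfrac1\alpha\right)^{k}\|\partial_x^{k+1}\phi\|_\infty$, and the $R$ case follows verbatim. The main obstacle I anticipate is bookkeeping: verifying the coefficient identity $\sum_{p=1}^{m}c_{p,m}=0$ and confirming that the $I_L$-tower coefficients emerging from summing Lemma~\ref{lem:D_til} over $p$ are exactly the stated $|c_{p-m+1,k-m+1}|$ and remain bounded — a purely algebraic but error-prone induction that must be aligned carefully with the indexing in \eqref{eq:expression}.
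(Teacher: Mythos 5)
Your proposal follows essentially the same route as the paper's proof: apply Lemma \ref{lem:D_til} to each $\mtD_{L}^{p}$, sum over $p=1,\dots,k$, observe that the interior derivative terms of orders $2,\dots,k$ cancel, and bound the remaining $(1/\alpha)^{k+1}(I_L)^m[\partial_x^{k+1}\phi]$ tower using $\|I_L[w]\|_\infty\le\|w\|_\infty$. You in fact make explicit the one point the paper leaves implicit (it only records the collected coefficients as unspecified constants $C_{1,p}$), namely the cancellation $\sum_{p=1}^{m}c_{p,m}=0$ for $m\ge 2$, which is precisely what the claimed order requires; the identity is true (indeed $c_{p,m}=(-1)^p\binom{m-1}{p-1}$, so the sum is $-(1-1)^{m-1}$), but your intermediate manipulation of the double sum is garbled as written --- after substituting $q=p-1$ and swapping the order of summation one gets $\sum_{p=2}^{m}c_{p,m}=-\sum_{i=1}^{m-1}\sum_{q=1}^{i}c_{q,i}=1$, using the inductive hypothesis for $2\le i\le m-1$ together with the base term $c_{1,1}=-1$, and this then cancels $c_{1,m}=-1$. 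With that small repair the argument is complete.
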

\begin{proof}
	It is obvious for the $k=1$ case from the partial sums \eqref{eq:sum} and Lemma \ref{lem_D}. We here consider left-sided first derivative approximations, ${\mP}^{L}_{k}[\phi]$, for $k\geq 2$ cases.

By Lemma \ref{lem:D_til}, using \eqref{eqn:DtLAnotherform} we see that 
\begin{align*}
 \sum\limits_{p=1}^{k}\mtD_{L}^p[\phi](x) & = -\frac{1}{\alpha}\partial_{x}\phi(x) - \sum_{p=2}^{k} \left(\frac{1}{\alpha}\right)^{p} \left(\partial_{x}^{p}\phi(x) -\partial_{x}^{p}\phi(b) e^{-\alpha(b-x)} \right)- \left(\frac{1}{\alpha}\right)^{k+1}I_{L}[\partial_{x}^{k+1}\phi](x)  \\ 
&  \quad + \sum\limits_{p=2}^{k} \sum_{m=p}^{k}c_{p,m}\left(\frac{1}{\alpha}\right)^{m} \left(\partial^{m}_{x}\phi(x) -\partial_{x}^{m}\phi(b) e^{-\alpha(b-x)} \right) \\
& \quad +  \sum\limits_{p=2}^{k} (-1)^p \left(\frac{1}{\alpha}\right)^{k+1}  \sum\limits_{m = 1}^{p} |c_{p-m+1,k-m+1}|(I_L)^m[\partial_{x}^{k+1}\phi](x) \\
& = -\frac{1}{\alpha}\partial_{x}\phi(x) + \sum_{p=2}^{k} C_{1,p} \left(\frac{1}{\alpha}\right)^{p} \left(\partial_{x}^{p}\phi(x) -\partial_{x}^{p}\phi(b) e^{-\alpha(b-x)} \right)+  \left(\frac{1}{\alpha}\right)^{k+1}  \sum\limits_{p=1}^{k} C_{2,p} (I_L)^p[\partial_{x}^{k+1}\phi](x),
\end{align*}
where $C_{1,p}$ and $C_{2,p}$ are two different constants only depending on $p$. This equation proves the case for $k \ge 2$, \eqref{eqn:DtLAnotherform} is a correct approximation of $\phi_x^+$ and so is \eqref{eq:expressionL}. The proof for \eqref{eq:expressionR} is similar to \eqref{eq:expressionL}.
\end{proof}

Now we will verify the accuracy of the modified sum for the approximation to $\partial_{xx}$, as above, we start with the following lemma.
\begin{lem}\label{lem:D0_til}
%    Similar to the first derivative case, we also give another equivalent definition of $\mtD_{0}^{p}[\phi](x)$ for $2 \le p \le k$ as the follows.  
For $2 \le p \le k$,
  \begin{align}\label{eqn:Dt0Anotherform}
  \mtD_{0}^{p}[\phi](x) = & \sum_{m=p}^{k}c_{p,m}\left(\frac{1}{\alpha}\right)^{2m} \partial^{2m}_{x}\phi(x) \nonumber \\ 
     & - \frac{1}{2}\sum\limits_{m=p+1}^{k} c_{p,m-1} \left( \left(\frac{1}{\alpha}\right)^{2m} \partial_{x}^{2m}\phi(a) + \left(-\frac{1}{\alpha}\right)^{2m+1} \partial_{x}^{2m+1}\phi(a)\right) e^{-\alpha(x-a)}
      \nonumber \\ 
     & - \frac{1}{2} \sum\limits_{m=p+1}^{k} c_{p,m-1} \left( \left(\frac{1}{\alpha}\right)^{2m} \partial_{x}^{2m}\phi(b) + \left(\frac{1}{\alpha}\right)^{2m+1} \partial_{x}^{2m+1}\phi(b)\right) e^{-\alpha(b-x)}  \nonumber\\
 & + (-1)^p \left(\frac{1}{\alpha}\right)^{2k+2}  \sum\limits_{m = 1}^{p} |c_{p-m+1,k-m+1}|(I_L)^m[\partial_{x}^{2k+2}\phi](x).
\end{align}
\end{lem}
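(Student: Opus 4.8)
The plan is to mirror the proof of Lemma~\ref{lem:D_til} line by line, with three substitutions: $\mD_L$ is replaced by $\mD_0$, Lemma~\ref{lem_D} is replaced by Lemma~\ref{lem_D2}, and the one-sided operator $I_L$ (which, upon differentiation, only spawns a boundary exponential at $b$) is replaced by $I_0$, which lowers the differentiation order by \emph{two} at each step and spawns boundary exponentials at \emph{both} endpoints $a$ and $b$. As in the first-derivative case, the load-bearing intermediate fact is that $\mttD_0^{p}[\phi]$ has a clean form with no boundary exponentials,
\begin{align*}
\mttD_0^{p}[\phi](x) = \sum_{m=p}^{k} c_{p,m}\left(\frac{1}{\alpha}\right)^{2m}\partial_{x}^{2m}\phi(x) + (-1)^{p}\left(\frac{1}{\alpha}\right)^{2k+2}\sum_{m=1}^{p}\bigl|c_{p-m+1,\,k-m+1}\bigr|\, I_0^{m}\bigl[\partial_{x}^{2k+2}\phi\bigr](x),
\end{align*}
and that applying $\mD_0$ to it reintroduces boundary exponentials --- through the $I_0$ buried in $\mD_0=\mathcal I-\mathcal L_0^{-1}$ and through the identity $\partial_{xx}I_0[w]=\alpha^2\bigl(I_0[w]-w\bigr)$ coming from \eqref{def:Int0_ops} --- which the correction sums in \eqref{eq:expression0} then cancel up to the top orders.

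First I would dispatch the base case $p=2$. The $p=1$ instance of \eqref{eq:expression0}, with $c_{1,m-1}=-1$, adds to $\mD_0[\phi]$ precisely the negatives of the two boundary sums displayed in Lemma~\ref{lem_D2}: the even orders $2m$ and odd orders $2m+1$ for $m=2,\dots,k$ match term by term, and the endpoint sign patterns $(-1/\alpha)^{2m+1}$ at $a$ versus $(1/\alpha)^{2m+1}$ at $b$ are exactly reproduced, so $\mttD_0^{1}[\phi](x)=-\sum_{m=1}^{k}(1/\alpha)^{2m}\partial_{x}^{2m}\phi(x)-(1/\alpha)^{2k+2}I_0[\partial_{x}^{2k+2}\phi](x)$. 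Using $\partial_{xx}I_0[w]=\alpha^2(I_0[w]-w)$ one then obtains each even derivative $\partial_{x}^{2\ell}\mttD_0^{1}[\phi]$ as a shifted interior sum plus a reduced power of $I_0$ applied to $\partial_{x}^{2k+2}\phi$ (the would-be overshoot past $\mathcal C^{2k+2}$ cancels, exactly as in Lemma~\ref{lem:D_til}); feeding this into $\mD_0$ and invoking Lemma~\ref{lem_D2} with $k$ replaced by $k-1$ produces $\mtD_0^{2}[\phi]$, which after $c_{2,m}=-\sum_{i=1}^{m-1}c_{1,i}=m-1$ is exactly \eqref{eqn:Dt0Anotherform} at $p=2$.

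Then I would run the induction $p\to p+1$ exactly as in the proof of Lemma~\ref{lem:D_til}. Assuming \eqref{eqn:Dt0Anotherform} at level $p$, add the correction from \eqref{eq:expression0} to pass to $\mttD_0^{p}[\phi]$ in the boundary-free form above, compute $\partial_{x}^{2\ell}\mttD_0^{p}[\phi]$ for the relevant $\ell$ (again using $\partial_{xx}I_0[w]=\alpha^2(I_0[w]-w)$ and the overshoot cancellation), then apply $\mD_0$ with the sum in Lemma~\ref{lem_D2} truncated at $k-p$. The interior terms recombine, via the defining recurrence $c_{p+1,m}=-\sum_{i=p}^{m-1}c_{p,i}$, into $\sum_{m=p+1}^{k} c_{p+1,m}(1/\alpha)^{2m}\partial_{x}^{2m}\phi$; the powers of $I_0$ recombine, via the telescoping $c_{p,m-1}=c_{p,m}+c_{p-1,m-1}$ (the manipulation with $\bigl|c_{p-m+1,k-m+1}+c_{p-m,k-m}\bigr|$ used in Lemma~\ref{lem:D_til}), so that the $I_0$-index is promoted from $p$ to $p+1$ with coefficients $\bigl|c_{p-m+2,k-m+1}\bigr|$; and the residual boundary exponentials reassemble into the two correction sums of \eqref{eqn:Dt0Anotherform} with the parity-dependent signs $(\pm 1/\alpha)$ at $a$ and $(1/\alpha)$ at $b$.

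The main obstacle is the two-endpoint boundary bookkeeping. Because $I_0$ generates contributions at $a$ and $b$ simultaneously, and with opposite sign conventions on the odd-order derivatives, one must carry two coupled recursions for the boundary coefficients and check at every stage that the $c_{p,m-1}$-weighted corrections in \eqref{eq:expression0} annihilate all boundary exponentials except the highest, for both parities and both endpoints. Once this cancellation is verified, the interior part and the $I_0$-remainder go through by the same term-shifting and integration-by-parts computation already carried out for $\mttD_L^{p}$ in Lemma~\ref{lem:D_til}, which completes the proof.
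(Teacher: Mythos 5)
Your proposal is correct and follows essentially the same route as the paper's proof: establish the boundary-exponential-free form of $\mttD_0^{p}[\phi]$, differentiate it (even and odd orders, using $\partial_{xx}I_0[w]=\alpha^2(I_0[w]-w)$ and the endpoint sign relations for $\partial_x I_0$ at $a$ and $b$), apply $\mD_0$ via Lemma~\ref{lem_D2} with the truncated sum, and recombine coefficients through $c_{p,m-1}=c_{p,m}+c_{p-1,m-1}$ and $c_{p+1,m}=-\sum_{i=p}^{m-1}c_{p,i}$, handling the base case $p=2$ and the induction step exactly as the paper does. The only difference is that you leave the two-endpoint bookkeeping as a flagged verification rather than writing it out, but you identify it correctly and the computation goes through as you describe.
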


\begin{proof}
  When $p = 2$, \eqref{eqn:Dt0Anotherform} is same as what we get from \eqref{eq:expression0}. In fact, with \eqref{eqn:Dt0Anotherform}, we have
	\begin{align*}
	\mttD_{0}[\phi](x) = \mtD_{0}[\phi](x) & - \frac{1}{2}\sum\limits_{m=2}^{k} \left( \left(\frac{1}{\alpha}\right)^{2m} \partial_{x}^{2m}\phi(a) + \left(-\frac{1}{\alpha}\right)^{2m+1} \partial_{x}^{2m+1}\phi(a)\right) e^{-\alpha(x-a)}
      \\ 
      & - \frac{1}{2} \sum\limits_{m=2}^{k} \left( \left(\frac{1}{\alpha}\right)^{2m} \partial_{x}^{2m}\phi(b) + \left(\frac{1}{\alpha}\right)^{2m+1} \partial_{x}^{2m+1}\phi(b)\right) e^{-\alpha(b-x)}.
	\end{align*}
	Then similar to $\mttD_L[\phi](x)$, we can differentiate it,and by using Lemma \ref{lem_D},
	\begin{align*}
	\begin{split}
		\partial_{xx} \mttD_0[\phi](x) &= \partial_{xx} \left( -\sum\limits_{p=1}^{k} \left(\frac{1}{\alpha}\right)^{2p} \partial_{x}^{2p}\phi(x) - \left(\frac{1}{\alpha}\right)^{2k+2} I_{0}[\partial_{x}^{2k+2}\phi](x) \right) \\
		&=	-\sum\limits_{p=2}^{k} \left(\frac{1}{\alpha}\right)^{2p-2} \partial_{x}^{2p}\phi(x) - \left(\frac{1}{\alpha}\right)^{2k} I_{0}[\partial_{x}^{2k+2}\phi](x).
	\end{split}
	\end{align*}
	Repeating the above process, we can find a general form for $0\leq \ell \leq k$,
	\begin{align*}
		\partial_{x}^{2\ell}\mttD_0[\phi](x) = -\sum\limits_{p=1+\ell}^{k} \left(\frac{1}{\alpha}\right)^{2p-2\ell} \partial_{x}^{2p}\phi(x) - \left(\frac{1}{\alpha}\right)^{2k+2-2\ell} I_{0}[\partial_{x}^{2k+2}\phi](x).
	\end{align*}
%\SG{
We can also derive the following expression for $0\leq \ell \leq k$, which we will need later,%}
    \begin{align*}
		\partial_{x}^{2\ell+1}\mttD_0[\phi](x) = -\sum\limits_{p=1+\ell}^{k} \left(\frac{1}{\alpha}\right)^{2p-2\ell} \partial_{x}^{2p+1}\phi(x) - \left(\frac{1}{\alpha}\right)^{2k+2-2\ell} \partial_x I_{0}[\partial_{x}^{2k+2}\phi](x),
	\end{align*}
    where we use the property $\partial_x I_{0}[\partial_{x}^{2k+2}\phi](a) = \alpha I_{0}[\partial_{x}^{2k+2}\phi](a)$ and $\partial_x I_{0}[\partial_{x}^{2k+2}\phi](b) = -\alpha I_{0}[\partial_{x}^{2k+2}\phi](b)$.
	Now again from the equation in Lemma \ref{lem_D} with using $k-1$-th sum instead of $k$-th sum, we apply the operator $\mD_0$ to $\mttD_{0}[\phi]$:
        \begin{align*}
	\mtD_{0}^{2}[\phi](x) 
 &=\mathcal{D}_{0}[\mttD_{0}^{1}][\phi](x)  \\
 &= \sum_{m=2}^{k}(m-1)\left(\frac{1}{\alpha}\right)^{2m} \partial^{2m}_{x}\phi(x) \\
 & \quad - \frac{1}{2}\sum\limits_{m=3}^{k} (m-2) \left( \left(\frac{1}{\alpha}\right)^{2m} \partial_{x}^{2m}\phi(a) + \left(-\frac{1}{\alpha}\right)^{2m+1} \partial_{x}^{2m+1}\phi(a)\right) e^{-\alpha(x-a)}  \\ 
     & \quad - \frac{1}{2} \sum\limits_{m=3}^{k} (m-2) \left( \left(\frac{1}{\alpha}\right)^{2m} \partial_{x}^{2m}\phi(b) + \left(\frac{1}{\alpha}\right)^{2m+1} \partial_{x}^{2m+1}\phi(b)\right) e^{-\alpha(b-x)} \\
 & \quad + \left(\frac{1}{\alpha}\right)^{2k+2} I_0[I_0[\partial_{x}^{2k+2}\phi]](x) + (k-1) \left(\frac{1}{\alpha}\right)^{2k+2}I_0[\partial_{x}^{2k+2}\phi](x),
	\end{align*}
 which is the same as \eqref{eqn:DtLAnotherform} with $p = 2$. 
 Now we prove $p > 2$ cases by induction. Assume that for $p \ge 2$, and note that \eqref{eqn:DtLAnotherform} is equivalent from the definition of \eqref{eq:expressionL}, and now consider $p+1$ case, 
 \begin{align*}
	\mttD_{0}^{p}[\phi](x)  = & \sum_{m=p}^{k}c_{p,m}\left(\frac{1}{\alpha}\right)^{2m} \partial^{2m}_{x}\phi(x) + (-1)^p \left(\frac{1}{\alpha}\right)^{k+1}  \sum\limits_{m = 1}^{p} |c_{p-m+1,k-m+1}|(I_0)^m[\partial_{x}^{k+1}\phi](x),
	\end{align*}
 and differentiating it gives,
 \begin{align*}
	\begin{split}
		\partial_{xx}\mttD_{0}^{p}[\phi](x) &= \partial_{xx} \left( \sum_{m=p}^{k}c_{p,m}\left(\frac{1}{\alpha}\right)^{2m} \partial^{2m}_{x}\phi(x) + (-1)^p \left(\frac{1}{\alpha}\right)^{2k+2}  \sum\limits_{m = 1}^{p} |c_{p-m+1,k-m+1}|(I_0)^m[\partial_{x}^{2k+2}\phi](x) \right) \\
		&= \sum_{m=p}^{k}c_{p,m}\left(\frac{1}{\alpha}\right)^{2m} \partial^{2m+2}_{x}\phi(x) + (-1)^p \left(\frac{1}{\alpha}\right)^{2k+2}  \sum\limits_{m = 1}^{p} |c_{p-m+1,k-m+1}| \partial_{xx}(I_0)^m[\partial_{x}^{2k+2}\phi](x)  \\
		&= \sum_{m=p}^{k}c_{p,m}\left(\frac{1}{\alpha}\right)^{2m} \partial^{2m+2}_{x}\phi(x) + (-1)^p \left(\frac{1}{\alpha}\right)^{2k} |c_{p,2k}|(I_0[\partial_{x}^{2k+2}\phi](x) - \partial_{x}^{2k+2}\phi) \\
        & \quad + (-1)^p \left(\frac{1}{\alpha}\right)^{2k}  \sum\limits_{m = 2}^{p} |c_{p-m+1,k-m+1}|\left((I_0)^m[\partial_{x}^{2k+2}\phi](x) - (I_0)^{m-1}[\partial_{x}^{2k+2}\phi](x)\right) \\ 
        &= \sum_{m=p}^{k-1}c_{p,m}\left(\frac{1}{\alpha}\right)^{2m} \partial^{2m+2}_{x}\phi(x) + (-1)^p \left(\frac{1}{\alpha}\right)^{2k} (I_0)^p [\partial_{x}^{2k+2}\phi](x) \\
        & \quad + (-1)^p \left(\frac{1}{\alpha}\right)^{2k}  \sum\limits_{m = 1}^{p-1} \left(|c_{p-m+1,k-m+1} + c_{p-m,k-m}|\right)(I_0)^m[\partial_{x}^{2k+2}\phi](x) \\
        &= \sum_{m=p}^{k-1}c_{p,m}\left(\frac{1}{\alpha}\right)^{2m} \partial^{2m+2}_{x}\phi(x) + (-1)^p \left(\frac{1}{\alpha}\right)^{2k}  \sum\limits_{m = 1}^{p} |c_{p-m+1,k-m}|(I_0)^m[\partial_{x}^{2k+2}\phi](x),
	\end{split}
	\end{align*}
    where note that if $p$ is even, $|c_{p,m}| = c_{p,m}$ and if $p$ is odd, $|c_{p,m}| = -c_{p,m}$. Note that $c_{p,m-1} = c_{p,m} + c_{p-1,m-1}$ and repeating the above process, we can find a general form for $0 \leq \ell \leq k+1-p$, leading to  
    \begin{align*}
	\begin{split}
		\partial_{x}^{2\ell} \mttD_{0}^{p}[\phi](x) &= \sum_{m=p}^{k-\ell}c_{p,m}\left(\frac{1}{\alpha}\right)^{2m} \partial^{2m+2\ell}_{x}\phi(x) \\  
         & \quad + (-1)^p \left(\frac{1}{\alpha}\right)^{2k-2\ell+2}  \sum\limits_{m = 1}^{p} |c_{p-m+1,k-m-\ell+1}|(I_0)^m[\partial_{x}^{2k+2}\phi](x) \\
         & = \sum_{m=p+\ell}^{k}c_{p,m-\ell}\left(\frac{1}{\alpha}\right)^{2m-2\ell} \partial^{2m}_{x}\phi(x) \\  
         & \quad + (-1)^p \left(\frac{1}{\alpha}\right)^{2k-2\ell+2}  \sum\limits_{m = 1}^{p} |c_{p-m+1,k-m-\ell+1}|(I_0)^m[\partial_{x}^{2k+2}\phi](x).
	\end{split}
	\end{align*}
    Additionally, for $0 \leq \ell \leq k+1-p$, we have,
    \begin{align*}
	\begin{split}
		\partial_{x}^{2\ell+1} \mttD_{0}^{p}[\phi](x) &= 
         \sum_{m=p+\ell}^{k}c_{p,m-\ell}\left(\frac{1}{\alpha}\right)^{2m-2\ell} \partial^{2m+1}_{x}\phi(x) \\  
         & \quad + (-1)^p \left(\frac{1}{\alpha}\right)^{2k-2\ell+2}  \sum\limits_{m = 1}^{p} |c_{p-m+1,k-m-\ell+1}|\partial_x(I_0)^m[\partial_{x}^{2k+2}\phi](x),
	\end{split}
	\end{align*}
    where we have used the properties  $\partial_x(I_0)^m[\partial_{x}^{2k+2}\phi](a) = \alpha (I_0)^m[\partial_{x}^{2k+2}\phi](a)$ and $\partial_x(I_0)^m[\partial_{x}^{2k+2}\phi](b) = -\alpha (I_0)^m[\partial_{x}^{2k+2}\phi](b)$.
 	Now again from the equation in Lemma \ref{lem_D} with using $k-p$ -th sum instead of $k$-th sum, we apply the operator $\mD_0$ to $\mttD_{0}^p[\phi]$:
        \begin{align*}
	\mtD_{0}^{p+1}[\phi](x) & =\mathcal{D}_{0}[\mttD_{0}^{p}][\phi](x)  \\
 & = -\sum\limits_{\ell=1}^{k-p} \left(\frac{1}{\alpha}\right)^{2\ell} \partial_{x}^{2\ell}\mttD_{0}^{p}[\phi](x) - \left(\frac{1}{\alpha}\right)^{2k+2-2p} I_{0}[\partial_{x}^{2k+2-2p}\mttD_{0}^{p}[\phi]](x) \\
    & \quad \, + \frac{1}{2}\sum\limits_{\ell=4}^{2k+1-2p} \left(-\frac{1}{\alpha}\right)^{\ell} \partial_{x}^{\ell}\mttD_{0}^{p}[\phi](a) e^{-\alpha(x-a)}  + \frac{1}{2}\sum\limits_{\ell=4}^{2k+1-2p} \left(\frac{1}{\alpha}\right)^{\ell} \partial_{x}^{\ell}\mttD_{0}^{p}[\phi](b) e^{-\alpha(b-x)} \\
 & = -\sum_{\ell=1}^{k-p} \sum_{m=p+\ell}^{k}c_{p,m-\ell}\left(\frac{1}{\alpha}\right)^{2m} \partial^{2m}_{x}\phi(x) \\
 & \quad + \sum_{\ell=1}^{k-p}\sum\limits_{m = 1}^{p} (-1)^{p+1}  \left(\frac{1}{\alpha}\right)^{2k+2}   |c_{p-m+1,k-m-\ell+1}|(I_0)^m[\partial_{x}^{2k+2}\phi](x) \\
 & \quad + (-1)^{p+1} \left(\frac{1}{\alpha}\right)^{2k+2} \sum\limits_{m = 1}^{p} |c_{p-m+1,p-m}|(I_L)^{m+1}[\partial_{x}^{2k+2}\phi](x) \\
 & \quad + \frac{1}{2}\sum_{\ell=2}^{k-p} \sum_{m=p+\ell}^{k}c_{p,m-\ell}\left(\frac{1}{\alpha}\right)^{2m} \left( \partial^{2m}_{x}\phi(a)e^{-\alpha(x-a)} + \partial^{2m}_{x}\phi(b)e^{-\alpha(b-x)} \right)\\
 & \quad + \frac{1}{2}\sum_{\ell=2}^{k-p} \sum_{m=p+\ell}^{k}c_{p,m-\ell}\left(\frac{1}{\alpha}\right)^{2m} \left( - \partial^{2m}_{x}\phi(a)e^{-\alpha(x-a)} + \partial^{2m}_{x}\phi(b)e^{-\alpha(b-x)} \right)\\
 & = \sum_{m=p+1}^{k} c_{p+1,m} \left(\frac{1}{\alpha}\right)^{2m} \partial^{2m}_{x}\phi(x) + (-1)^{p+1} \left(\frac{1}{\alpha}\right)^{2k+2} (I_0)^{p+1}[\partial_{x}^{2k+2}\phi](x)  \\
& \quad + (-1)^{p+1} \left(\frac{1}{\alpha}\right)^{2k+2} \sum_{m=1}^{p} \left|\sum_{i = p-m+1}^{k-m} c_{p-m+1,i} \right| (I_0)^{m}[\partial_{x}^{2k+2}\phi](x) \\
 % & \quad + \sum_{\ell=1}^{k-p}\sum\limits_{m = 1}^{p} (-1)^{p+1}  \left(\frac{1}{\alpha}\right)^{2k+2}   |c_{p-m+1,k-m-\ell+1}|(I_0)^m[\partial_{x}^{2k+2}\phi](x) \\
 % & \quad + (-1)^{p+1} \left(\frac{1}{\alpha}\right)^{2k+2} \sum\limits_{m = 1}^{p} |c_{p-m+1,p-m}|(I_L)^{m+1}[\partial_{x}^{2k+2}\phi](x) \\
 & \quad - \frac{1}{2} \sum_{m=p+2}^{k} c_{p+1,m-1} \left(\frac{1}{\alpha}\right)^{2m} \left( \partial^{2m}_{x}\phi(a)e^{-\alpha(x-a)} + \partial^{2m}_{x}\phi(b)e^{-\alpha(b-x)} \right)\\
 & \quad - \frac{1}{2}\sum_{m=p+2}^{k} c_{p+1,m-1} \left(\frac{1}{\alpha}\right)^{2m} \left( - \partial^{2m}_{x}\phi(a)e^{-\alpha(x-a)} + \partial^{2m}_{x}\phi(b)e^{-\alpha(b-x)} \right)\\
 & = \sum_{m=p+1}^{k} c_{p+1,m} \left(\frac{1}{\alpha}\right)^{2m} \partial^{2m}_{x}\phi(x)  \\
& \quad + (-1)^{p+1} \left(\frac{1}{\alpha}\right)^{2k+2} \left( \sum_{m=1}^{p+1} |c_{p-m+2,k-m+1}| (I_0)^{m}[\partial_{x}^{2k+2}\phi](x) \right)\\
 & \quad - \frac{1}{2} \sum_{m=p+2}^{k} c_{p+1,m-1} \left(\frac{1}{\alpha}\right)^{2m} \left( \partial^{2m}_{x}\phi(a)e^{-\alpha(x-a)} + \partial^{2m}_{x}\phi(b)e^{-\alpha(b-x)} \right)\\
 & \quad - \frac{1}{2}\sum_{m=p+2}^{k} c_{p+1,m-1} \left(\frac{1}{\alpha}\right)^{2m} \left( - \partial^{2m}_{x}\phi(a)e^{-\alpha(x-a)} + \partial^{2m}_{x}\phi(b)e^{-\alpha(b-x)} \right)\\
\end{align*}
 which proves \eqref{eqn:Dt0Anotherform}.
\end{proof}

\begin{thm}
 Suppose $\phi\in\mathcal{C}^{2k+2}[a,b]$, $k=1, \, 2,\, 3$. Then, the modified partial sums \eqref{eq:mod_sum_0} satisfy
	\begin{align}
		 \|\partial_{xx}\phi(x)-{\mP}^{0}_{k}[\phi](x)\|_{\infty}\leq C \left(\frac{1}{\alpha}\right)^{2k} \|\partial_{x}^{2k+2}\phi(x)\|_{\infty}.
	\end{align}
\end{thm}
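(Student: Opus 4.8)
The plan is to mirror the proof of the preceding theorem for $\partial_x$, substituting the closed form of Lemma~\ref{lem:D0_til} for that of Lemma~\ref{lem:D_til} and carrying an extra set of boundary terms. The base case $k=1$ is immediate: by Lemma~\ref{lem_D2} with $k=1$ the boundary sums (which start at $p=4>2k+1=3$) are empty, so $\mD_0[\phi](x)=-(1/\alpha)^2\partial_{xx}\phi(x)-(1/\alpha)^4 I_0[\partial_x^4\phi](x)$, whence $\mP^0_1[\phi](x)=\partial_{xx}\phi(x)+(1/\alpha)^2 I_0[\partial_x^4\phi](x)$ and the estimate follows from $\|I_0[\psi]\|_\infty\le\|\psi\|_\infty$. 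For $k\ge 2$ I would insert the closed form \eqref{eqn:Dt0Anotherform} for each $\mtD_0^p[\phi]$, $2\le p\le k$, together with the $p=1$ term (taken as in \eqref{eq:mod_sum_0}, equivalently its tilded form $\mttD_0^1[\phi](x)=-\sum_{p=1}^k(1/\alpha)^{2p}\partial_x^{2p}\phi(x)-(1/\alpha)^{2k+2}I_0[\partial_x^{2k+2}\phi](x)$, which is exactly what the boundary correction built into \eqref{eq:expression0} produces), into the sum $\mD_0[\phi](x)+\sum_{p=2}^k\mtD_0^p[\phi](x)$ that defines $\mP^0_k[\phi](x)/(-\alpha^2)$.

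Having assembled this sum, I would split it into three groups: (i) the interior polynomial terms $(1/\alpha)^{2m}\partial_x^{2m}\phi(x)$; (ii) the boundary terms carrying $e^{-\alpha(x-a)}$ or $e^{-\alpha(b-x)}$; and (iii) the iterated-integral remainders, which come with a common prefactor $(1/\alpha)^{2k+2}$ and are finite sums of $(I_0)^m[\partial_x^{2k+2}\phi](x)$ with coefficients built from the $|c_{p,m}|$. For group (i), interchanging the order of summation, the coefficient of $(1/\alpha)^{2m}\partial_x^{2m}\phi(x)$ is $-1$ for $m=1$ and $-1+\sum_{p=2}^m c_{p,m}=\sum_{p=1}^m c_{p,m}$ for $2\le m\le k$; the key combinatorial fact — provable by a one-line induction on $m$ from $c_{1,m}=-1$ and $c_{p,m}=-\sum_{i=p-1}^{m-1}c_{p-1,i}$ — is that $\sum_{p=1}^m c_{p,m}=0$ for all $m\ge 2$, so group (i) collapses to exactly $-(1/\alpha)^2\partial_{xx}\phi(x)$. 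For group (iii), since $I_0$ is an averaging operator (because $\tfrac{\alpha}{2}\int_a^b e^{-\alpha|s-x|}\,ds=1-\tfrac12(e^{-\alpha(x-a)}+e^{-\alpha(b-x)})\le 1$) one has $\|(I_0)^m[\psi]\|_\infty\le\|\psi\|_\infty$, so group (iii) is bounded by $C(1/\alpha)^{2k+2}\|\partial_x^{2k+2}\phi\|_\infty$ with $C$ depending only on the finitely many coefficients involved (finiteness is where $k\le 3$ is used). Multiplying through by $-\alpha^2$ then yields $\mP^0_k[\phi](x)=\partial_{xx}\phi(x)+O\!\big((1/\alpha)^{2k}\big)\|\partial_x^{2k+2}\phi\|_\infty$, provided group (ii) vanishes.

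The main obstacle is precisely group (ii): proving that all boundary contributions cancel. This is harder than in the $\partial_x$ theorem, where interior and boundary pieces of each $\mtD_L^p$ come bundled into a single factor $(\partial_x^m\phi(x)-\partial_x^m\phi(b)e^{-\alpha(b-x)})$ so that one identity kills both. Here there are two boundaries, the surviving boundary pieces mix $\partial_x^{2m}\phi$ with $\partial_x^{2m\pm1}\phi$ at $a$ and $b$, and — crucially — the relevant coefficient is the \emph{shifted} $c_{p,m-1}$ rather than $c_{p,m}$ (this is exactly why the correction in \eqref{eq:expression0} is written with $c_{p,m-1}$). I would organize this by tracking, for each fixed $m$, the boundary coefficient from $\mttD_0^1$ against those generated when $\mD_0$ is applied to $\mttD_0^{p-1}$ in forming $\mtD_0^p=\mD_0[\mttD_0^{p-1}]$; the telescoping reduces the total $e^{-\alpha(x-a)}$- and $e^{-\alpha(b-x)}$-coefficients to $-\tfrac12\sum_{p=1}^{m-1}c_{p,m-1}$ and its $b$-analogue, which vanish by the same identity applied at index $m-1$. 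The lowest indices (where the sums in \eqref{eqn:Dt0Anotherform} are short or empty) and the initialization $\mttD_0^1$ must be checked by hand, but this is only finitely many terms for $k\le 3$. Finally, the even/odd parity mismatch flagged after \eqref{eq:expression0} (only one parity of boundary derivatives being available from ILW) is absorbed into $C$: substituting the available data for a missing $\partial_x^{2m+1}\phi$-type boundary value changes the correction by a quantity already of the target order.
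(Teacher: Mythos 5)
Your strategy is the same as the paper's: take the closed forms of Lemma \ref{lem_D2} and Lemma \ref{lem:D0_til}, substitute them into the partial sum, and collect interior, boundary, and iterated-integral terms. You are in fact more explicit than the paper, which simply absorbs the collected coefficients into unexamined constants $C_{1,p}$, $C_{2,p}$, $C_{3,p}$ and asserts the conclusion; your identification of the identity $\sum_{p=1}^{m}c_{p,m}=0$ (valid for $m\ge 2$) as the mechanism collapsing group (i), and your bound $\|(I_0)^m[\psi]\|_\infty\le\|\psi\|_\infty$ for group (iii), supply exactly what the paper leaves implicit.

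However, the step you flag as the main obstacle — group (ii) — does not close at the lowest index, and this is a genuine gap. With the partial sum as written in \eqref{eq:mod_sum_0}, the first summand is $\mD_0[\phi]$ itself, whose boundary sums in Lemma \ref{lem_D2} start at $p=4$, i.e.\ at $m=2$; the corrections generated by the $\mtD_0^p$ with $p\ge 2$ only begin at $m=p+1\ge 3$. Your own telescoped coefficient $-\tfrac12\sum_{p=1}^{m-1}c_{p,m-1}$ is correct, but the identity you invoke requires the upper index $m-1\ge 2$: at $m=2$ the sum is $c_{1,1}=-1$, so the coefficient is $+\tfrac12$, not $0$. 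Concretely, for $k=2$ one is left with $\tfrac12\alpha^{-4}\bigl(\partial_x^4\phi(b)+\alpha^{-1}\partial_x^5\phi(b)\bigr)e^{-\alpha(b-x)}$ plus its analogue at $a$, which after multiplication by $-\alpha^2$ is an $O(\alpha^{-2})$ contribution to $\mP^0_k[\phi]$ — larger than the claimed $O(\alpha^{-2k})$ for $k\ge 2$. The cancellation does go through if one sums the corrected operators $\mttD_0^p$ (whose boundary terms vanish identically for every $p\ge 1$) rather than $\mD_0+\sum_{p\ge2}\mtD_0^p$; that appears to be what the algorithm actually implements, and your proof should be written against that form of the partial sum. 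Note the paper's own proof shares this gap — the surviving $m=2$ term is hidden inside its $C_{3,p}$ — so the hand-check of low indices that you defer is not a formality but the place where the argument, as currently set up, fails.
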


\begin{proof}
It is obvious for the $k=1$ case from the partial sums \eqref{eq:D0sum} and Lemma \ref{lem_D}. We now consider second derivative approximations, ${\mP}^{0}_{k}[\phi]$, for $k \geq 2$ cases. 

By Lemma \ref{lem:D0_til}, using the format \eqref{eqn:Dt0Anotherform}, we see that 
\begin{align*}
 \sum\limits_{p=1}^{k}\mtD_{0}^p[\phi](x) & = -\frac{1}{\alpha}^2\partial_{xx}\phi(x) -\sum\limits_{p=2}^{k} \left(\frac{1}{\alpha}\right)^{2p} \partial_{x}^{2p}\phi(x) - \left(\frac{1}{\alpha}\right)^{2k+2} I_{0}[\partial_{x}^{2k+2}\phi](x) \\
      & \quad + \frac{1}{2}\sum\limits_{p=4}^{2k+1} \left(-\frac{1}{\alpha}\right)^{p} \partial_{x}^{p}\phi(a) e^{-\alpha(x-a)}
     + \frac{1}{2}\sum\limits_{p=4}^{2k+1} \left(\frac{1}{\alpha}\right)^{p} \partial_{x}^{p}\phi(b) e^{-\alpha(b-x)} \\
    &  \quad + \sum_{p=2}^{k} \sum_{m=p}^{k}c_{p,m}\left(\frac{1}{\alpha}\right)^{2m} \partial^{2m}_{x}\phi(x) \nonumber \\ 
     & - \frac{1}{2}\sum_{p=2}^{k} \sum\limits_{m=p+1}^{k} c_{p,m-1} \left( \left(\frac{1}{\alpha}\right)^{2m} \partial_{x}^{2m}\phi(a) + \left(-\frac{1}{\alpha}\right)^{2m+1} \partial_{x}^{2m+1}\phi(a)\right) e^{-\alpha(x-a)}
      \nonumber \\ 
     & - \frac{1}{2} \sum_{p=2}^{k} \sum\limits_{m=p+1}^{k} c_{p,m-1} \left( \left(\frac{1}{\alpha}\right)^{2m} \partial_{x}^{2m}\phi(b) + \left(\frac{1}{\alpha}\right)^{2m+1} \partial_{x}^{2m+1}\phi(b)\right) e^{-\alpha(b-x)}  \nonumber\\
 & + (-1)^p \left(\frac{1}{\alpha}\right)^{2k+2} \sum_{p=2}^{k} \sum\limits_{m = 1}^{p} |c_{p-m+1,k-m+1}|(I_0)^m[\partial_{x}^{2k+2}\phi](x) \\
 & = -\frac{1}{\alpha}^2\partial_{xx}\phi(x) + \sum\limits_{p=2}^{k} C_{1,p} \left(\frac{1}{\alpha}\right)^{2p} \partial_{x}^{2p}\phi(x) + \left(\frac{1}{\alpha}\right)^{2k+2} \sum_{m=1}^{k} C_{2,p}(I_0)^m[\partial_{x}^{2k+2}\phi](x) \\
     & + \sum_{m=2}^{k} C_{3,p} \left( \left(\frac{1}{\alpha}\right)^{2m} \partial_{x}^{2m}\phi(a) + \left(-\frac{1}{\alpha}\right)^{2m+1} \partial_{x}^{2m+1}\phi(a)\right) e^{-\alpha(x-a)} \\ 
     & + \sum_{m=2}^{k} C_{3,p} \left( \left(\frac{1}{\alpha}\right)^{2m} \partial_{x}^{2m}\phi(b) + \left(\frac{1}{\alpha}\right)^{2m+1} \partial_{x}^{2m+1}\phi(b)\right) e^{-\alpha(b-x)}
\end{align*}
where $C_{1,p}$, $C_{2,p}$ and $C_{3,p}$ are three constants only depending on $p$. This equation proves the case for $k \ge 2$, \eqref{eqn:Dt0Anotherform} is a correct approximation of $\phi_{xx}$ and so is \eqref{eq:expression0}. 

\end{proof}

\section{Numerical Integration Strategies}\label{sec:Numerical Integration Strategies}

In this section, we present the approximation for 
the differential operators \eqref{eq:D_ops}
$\mathcal{D}_{L}$ and $\mathcal{D}_{R}$ and \eqref{eq:D0_ops} $\mathcal{D}_0$.
To simplify matters, let us consider the case where the spatial domain is limited to one dimension, say  $[a,b]$, and partition it with
\begin{align*}
a=x_{0}<x_{1}<\cdots<x_{N-1} <x_{N} =b,
\end{align*}
where $\Delta x_i=x_{i+1}-x_{i}$ for $i=0,\cdots,N-1$.
In what follows, we will address both the cases of uniform and nonuniform grid spacing. 

From the equations \eqref{def:Int_ops} and \eqref{def:Int0_ops}, by dividing the integral range into cell units for $i=0,\cdots,N-1$,
\begin{align*}
	I_{L,i} := \alpha \int_{x_i}^{x_{i+1}} e^{-\alpha (s-x)}v(s) \, ds 
 \qquad \text{and}\qquad
 I_{R,i} := \alpha \int_{x_i}^{x_{i+1}} e^{-\alpha (x-s)}v(s) \, ds, 
\end{align*}
we have the recursive relations
\begin{align}\label{eq:recursive}
    \begin{split}	
	I_{L,i} &= e^{-\alpha\Delta x_{i+1}} I_{L,i+1} + J_{L,i},\,\quad i=0,\ldots,N-1, \\
	I_{R,i} &= e^{-\alpha\Delta x_{i}} I_{R,i-1} + J_{R,i},\qquad i=1,\ldots,N,
    \end{split}
\end{align}
where $I_{L,0} = 0$ and $I_{R,N} = 0$
with
\begin{align}\label{eq:JLR}
J_{L,i} =  \alpha \int_{x_{i}}^{x_{i+1}} e^{-\alpha (s-x_{i})} v(s) ds
\qquad \text{and}\qquad
J_{R,i} =  \alpha \int_{x_{i-1}}^{x_{i}} e^{-\alpha (x_{i}-s)} v(s) ds.
\end{align}

The remaining task is to approximate $J_{L,i} $ and $J_{R,i}$. On a uniformly distributed spatial domain, 
a high order methodologies to approximate $J_{L,i} $ and $J_{R,i}$ is suggested in \cite{christlieb2020, christlieb2019kernel} making use of classical WENO integration \cite{jiang2000weighted}. 
On a nonuniformly distributed domain, a high order methodology is suggested in \cite{christlieb2020kernel}, where an exponential based form WENO \cite{HKYY} on a uniform domain under a coordinate transformation is introduced. In this paper, we address the ENO-type approach rather than WENO for the nonuniform grid case. In Section 5.1, we will briefly review the development method for the uniform case, and in Section 5.2, an integration method utilizing the ENO \cite{shu1988eno, shu1989eno} concept will be introduced in the context of nonuniform boundaries.

\subsection{WENO on uniform grids}\label{subsec:weno}

To approximate $J_{L,i}$ and $J_{R,i}$ with fifth order of accuracy,
we choose a six-point stencil $\SS:=\SS(i)=\{x_{i-2},\ldots, x_{i+3}\}$ and three sub-stencils $S_0, S_1, S_2$ defined by $S_r:=S_{r}(i) =\{ x_{i-2+r},\ldots, x_{i+1+r}\}$ for $r = 0, 1, 2$.
We focus on computing $J^L_{i}$ here and note that the approximation for $J^R_{i}$ can be computed in a mirror symmetric way.

We first obtain the interpolating polynomial $p$ of degree less than or equal to 5 to $v$ on the big stencil $\SS$ and 
interpolants $p_r$ to $v$ of degree at most 3 using the substencils $S_r$ for $r=0,1,2$. Then, we define $J^{L}_{i}$ and $J^{L}_{i,r}$ approximating $J_{L,i}$ from $p$ and $p_r$, respectively:
\begin{align}\label{eq:gloJR}
    J^{L}_{i} :=\alpha \int_{x_{i}}^{x_{i+1}}e^{-\alpha(x_{i} - s)} p(s) \,ds  = J_{L,i}+ \mO(\Delta x^6),
\end{align}
and 
\begin{align}\label{eq:localJR}
    J^{L}_{i,r} :=\alpha \int_{x_{i}}^{x_{i+1}}e^{-\alpha(x_{i} - s)} p_r(s)  \,ds   =J_{L,i}+ \mO(\Delta x^4), %\quad \text{for }r=0,1,2,
\end{align}
for $r=0,1,2$, where the function $v$ is smooth enough.

Secondly, WENO strategies address the linear and nonlinear weights to combine local approximations $J^{L}_{i,r}$ on the sub-stencils for the construction of the final approximation to $J_{L,i}$.
With linear weights $d_k$, we want the combination of local solutions $J^{L}_{i,r}$ is consistent with the global solution $J^{L}_{i}$, i.e.,
\begin{equation}\label{weno:dr}
	J^{L}_{i} =\sum_{r=0}^{2}d_{r}J^{L}_{i,r}	
\end{equation}
such that a high order method is obtained. 
For the essentially non-oscillatory property, we introduce the nonlinear weights $\omega_r$ by constructing smoothness indicators, which are factors that assess the characteristics of functions within local stencils. For the details, we recommend that the reader refer to \cite{jiang1996efficient, jiang2000weighted}.
We note that both linear and nonlinear coefficients $d_{r}$ and $\omega_r$ are defined to satisfy the partition of unity, i.e., the sum of each should be equal to unity.

\subsection{ENO on nonuniform grids}\label{subsec:ENO}

When the domain is a bit complicated, for example, a circular domain, our scheme can still be applied to solve corresponding equations, however  a numerical integration strategy for a non-uniform mesh needs to be adopted. In this case we have chosen to make use of a  ``nonuniform" fourth order ENO method to do the spatial discretization which is efficient and accurate enough for smooth test problems and a good choice if we are  considering a non-smooth problem.  It should be noted that one can use other appropriate spatial discretizations, such as ``nonuniform" WENO \cite{marti2024efficient}. Let us take a circular domain as an example to illustrate how we do the implementation. 

\begin{figure}[ht]
    \centering
    \includegraphics[width=0.4\textwidth]{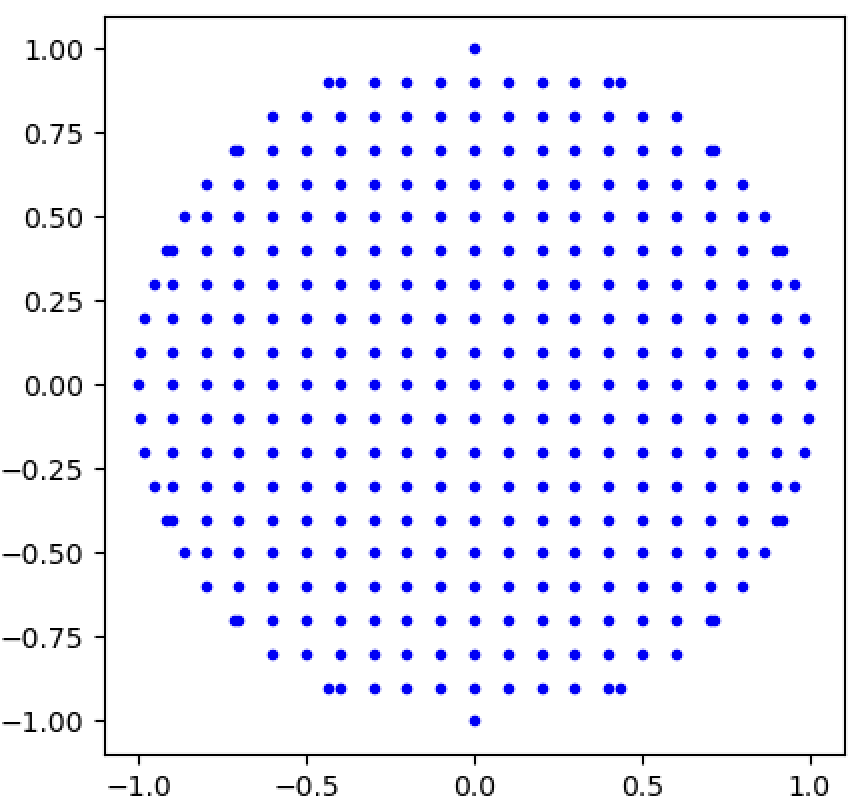}
    \includegraphics[width=0.4\textwidth]{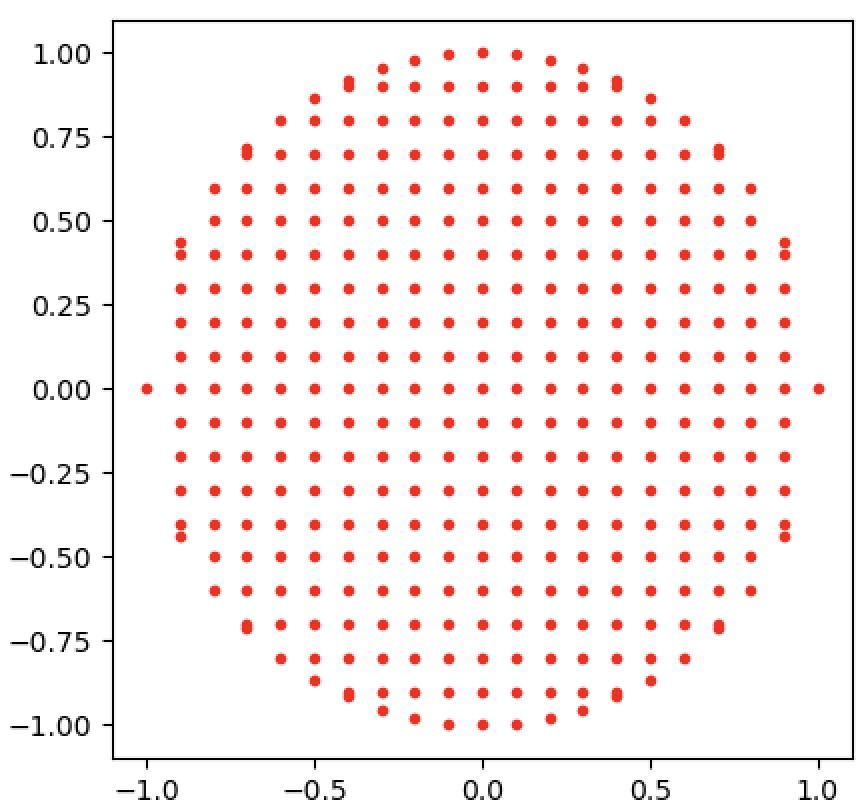}
    \caption{Circular mesh along $x$-axis (blue) and $y$-axis (red).}
    \label{fig:nonunifrom mesh}
\end{figure}

We firstly discretize the domain by embedding it in a regular Cartesian mesh of say $N_y$ along the $x$-axis and $N_x$ along the $y$-axis, and additionally incorporating the termination points of each line, which will lie on the boundary (boundary points); see Figure \ref{fig:nonunifrom mesh}. Thus in this example,  for each line, the interior grids are uniform and thus we only need to care about the nonuniform mesh at the boundary. In Figure \ref{fig: nonuniform ENO method}, let ``$h$" be the length of uniform cells and ``$a$" is one of the boundary points of this line, with the cell length $\lambda h$. 
In order to get the integration over the cells around the boundary, such as $J_1^L$ and $J_0^R$, we first extend the boundary with several cells with length $h$  and then make use of extrapolation to approximate the function values at those points. For example, if we extend two additional points with fourth order extrapolation, then the coefficients are given by the following:
\begin{align}\label{eq:nonuniformENO_extension}
    v_{-2} & = \frac{(5+\lambda)(4+\lambda)}{\lambda(1+\lambda)}v_0 - \frac{(5+\lambda)(4+\lambda)(3+\lambda)}{3\lambda}v_1 + \frac{(5+\lambda)(4+\lambda)(2+\lambda)}{(1+\lambda)}v_2 
    \nonumber \\ & \qquad 
    - (5+\lambda)(3+\lambda)v_3 + \frac{(4+\lambda)(2+\lambda)}{3}v_4 \\ 
    v_{-1} & = \frac{4+\lambda}{\lambda}v_0 - \frac{(4+\lambda)(3+\lambda)(2+\lambda)}{6\lambda}v_1 + \frac{(4+\lambda)(3+\lambda)}{2}v_2 
    \nonumber \\ & \qquad 
    - \frac{(4+\lambda)(1+\lambda)}{2}v_3 + \frac{(2+\lambda)(1+\lambda)}{6}v_4,
\end{align}
where $v_0, v_1, v_2, v_3, v_4, v_{-1}, v_{-2}$ are the function values at grid points $x_0=a,x_1,x_2,x_3,x_4$ and the extended two points outside the domain $x_{-1},x_{-2}$, respectively.
%\SG{
With this extrapolation, we approximate $J_L$ and $J_R$ using ENO method, whose construction falls a similar process to that in WENO in the uniform case shown in \eqref{eq:localJR}, but with the mesh length parameter $\lambda$.%} 
There are three different stencils used in \eqref{eq:localJR}, and while any of them can be chosen, it is important to use the same stencil for both $J_L$ and $J_R$ in the implementation.  

\begin{figure}[h]
    \centering
    \includegraphics[width=0.6\textwidth]{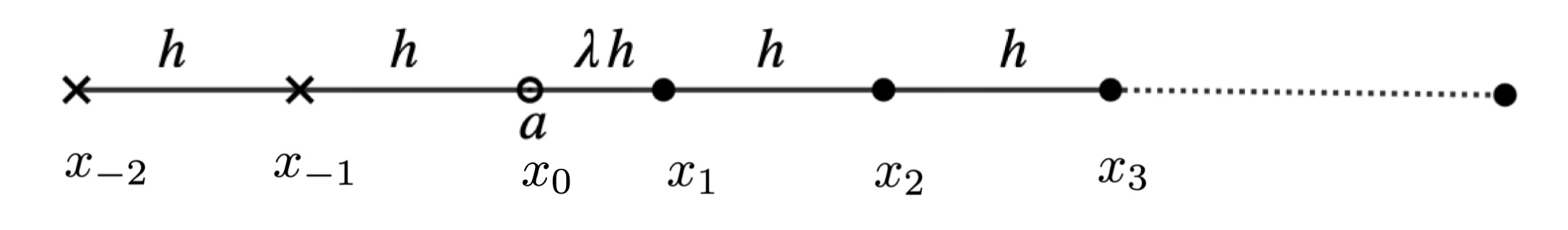}
    \caption{Nonuniform third order ENO method.}
    \label{fig: nonuniform ENO method}
\end{figure}

\subsection{Time integration using SSP-RK methods}\label{sec:rev-time}

To achieve high-order temporal accuracy, we employ the strong-stability-preserving Runge–Kutta (SSP-RK) framework for time integration. Specifically, for all numerical examples presented in Section \ref{sec:result}, we adopt the third-order, three-stage SSP-RK method. At each intermediate Runge–Kutta stage, the spatial kernel-based operator is applied to the evolving solution. This integration strategy ensures that the combined scheme maintains both high-order accuracy and desirable stability properties.

This approach follows standard practice in the method-of-lines setting and has been shown to be effective in previous kernel-based schemes \cite{gottlieb2001strong}. We note that in earlier sections, backward Euler was used for deriving the analytic form of the kernel operators, but all numerical results use SSP-RK unless otherwise stated. The general form of the SSP-RK(3,3) scheme is given by:
\begin{align*}
u^{(1)} &= u^n + \Delta t \, \mathcal{L}(u^n), \\
u^{(2)} &= \frac{3}{4} u^n + \frac{1}{4} \left( u^{(1)} + \Delta t \, \mathcal{L}(u^{(1)}) \right), \\
u^{n+1} &= \frac{1}{3} u^n + \frac{2}{3} \left( u^{(2)} + \Delta t \, \mathcal{L}(u^{(2)}) \right),
\end{align*}
where \( \mathcal{L}(u) \) denotes the spatial discretization using the kernel-based operator.

\subsection{Algorithm}

In this section, as an example, we outline the algorithm for solving one-dimensional linear diffusion equation with Dirichlet boundary condition; see Example \ref{ex:diffusive} in Sec. \ref{sec:result} and Algorithm \ref{Algorithm1}. \rev{The overall algorithm is as follows:}
   \RestyleAlgo{ruled}
    \begin{algorithm}[h!]
        \caption{Outline of the MOL$^T$ - general operator scheme for solving one-dimensional linear diffusion equation with Dirichlet boundary condition. }\label{Algorithm1}

            \textbf{1.} Compute $\alpha := \frac{\beta}{q \Delta t}$, where $\beta$ is chosen based on \rev{Table \ref{tab:beta_max} and to be more specific here, we use $\beta_{2,k,max}$ for different $k-$order scheme.} 
            
    \If{the mesh is uniform}{
        \textbf{2.} Approximate the integrals $J_L$ and $J_R$ using WENO quadrature; see Section \ref{subsec:weno}\;
    }
    \Else{
        \textbf{2.} Approximate the integrals $J_L$ and $J_R$ using ENO quadrature; see Section \ref{subsec:ENO}\;
    }
    \textbf{3.} Using the previously computed approximations $J_L$ and $J_R$, we apply the recurrence relations \eqref{eq:recursive} to obtain the convolution integrals. Then, with Dirichlet boundary condition \eqref{Dir_bdy_AB0}, we can compute $\mD_0$ by \eqref{eq:D0_ops}, which is similar to \cite{christlieb2020,christlieb2019kernel}. 

    \textbf{4.} To get $\mtD_0$ in \eqref{eq:expression0}, we use ILW to get all the even-order derivatives needed. To be more specific, we use the following formula to get $\mttD_0$:
    \begin{align*}
        \mttD_{0}^p[\phi](x) = \mtD_{0}^p[\phi](x) + \sum\limits_{m=p+1}^{k} c_{p,m-1} \left( \left(\frac{1}{\alpha}\right)^{2m} \partial_{x}^{2m}\phi(a) - \mu  \left(\frac{1}{\alpha}\right)^{2m} \partial_{x}^{2m}\phi(b)\right) e^{-\alpha(x-a)}  \\
        +  \sum\limits_{m=p+1}^{k} c_{p,m-1} \left( \left(\frac{1}{\alpha}\right)^{2m} \partial_{x}^{2m}\phi(b) - \mu \left(\frac{1}{\alpha}\right)^{2m}\partial_{x}^{2m}\phi(a)\right) e^{-\alpha(b-x)},  \quad p \ge 1, 
    \end{align*}
    and then with \eqref{eq:expression0} to get $\mtD_0$.

    \textbf{5.} Repeat Step \textbf{2--4} $k-$times to approximate the second derivative $u_{xx}$ with $k$ partial sums of $\mtD$ and when $k \ge 2$, we only need to apply WENO or ENO quadrature to the first terms; see more details in \cite[Section 5]{christlieb2020}. This leads to the partial sum approximations of $u_{xx}$ with \eqref{eq:mod_sum_0}.  

    \textbf{6.} Let the partial sum approximation from Step \textbf{5} be the right-hand side of 
    $u_t = f(u,t)$, then apply $k-$order SSP RK method to solve the ODE to get the final solution.         
    \end{algorithm}

\rev{The following remarks are meant to charily Algorithm \ref{Algorithm1}:}
\begin{itemize}
   \item \textbf{Remark about nonlinear problems:} \rev{Although Algorithm \ref{Algorithm1} only shows the process for a linear problem, it also applies to problems such as
   \[
        u_t  = g(u)_{xx}, 
    \]
        where $g$ can be linear or nonlinear operators. We let the operators $\mtD_0$ apply to $g(u^n)$ to get $\mtD_0[g(u^n)](x)$ with the wave speed $q$ chosen by $q:= \max|g'(u)|$. For the case of first derivative including Hamilton-Jacobian case, the process is similar; see more details in \cite{christlieb2019kernel, christlieb2020}. }
    \item \textbf{Remark about step 1:} \rev{We first need to choose $\alpha$, which is defined by $\alpha := \frac{\beta}{q \Delta t}$. $q$ is coming from the problem and the parameter $\beta$ is determined by the stability analysis. To be self-contained, we present Theorem \ref{thm:stability_beta} and Table \ref{tab:beta_max} to show how we choose beta; see more details including the proof of Theorem \ref{thm:stability_beta} in \cite{christlieb2020}.}
\begin{table}[h]
\centering
\begin{tabular}{c|c|c|c}
\hline
$k$ & $\beta_{1,k,\max}$ & $\beta_{2,k,\max}$ & $\beta_{k,\max}$\\
\hline
1 & 2 & 2 & 1\\
2 & 1 & 1 & 0.5 \\
3 & 1.243 & 0.8375 & 0.4167\\
\hline
\end{tabular}
\caption{ $\beta_{\max}$ in Theorem \ref{thm:stability_beta} for $k=1,2,3$}
\label{tab:beta_max}
\end{table}
\rev{
\begin{thm}\label{thm:stability_beta}
(\cite[Table 1]{christlieb2020}) \begin{itemize}
    \item[(a)] \textit{For the linear advection equation $u_t + c u_x = 0$ with periodic boundary conditions, there exists constant $\beta_{1,k,\max} > 0$ for $k=1,2$, such that the scheme is A-stable provided $0 < \beta \leq \beta_{1,k,\max}$.}
    \item[(b)] \textit{For the linear diffusion equation $u_t = q u_{xx}$ with $q>0$ and periodic boundary conditions, there exists constant $\beta_{2,k,\max} > 0$ for $k=1,2,3$, such that the scheme is A-stable provided $0 < \beta \leq \beta_{2,k,\max}$.}
\end{itemize}
\textit{The constants $\beta_{1,k,\max}$ and $\beta_{2,k,\max}$ are summarized in Table~\ref{tab:beta_max}.}
\end{thm}}
\rev{Furthermore, for the advection-diffusion case, the appropriate choice of $\beta$ is listed under the column $\beta_{k,\max}$ in Table~\ref{tab:beta_max}.}
\item \textbf{Remark about step 3:} \rev{For the periodic case, the operators do not need additional information for boundary derivatives. For other cases like Dirichlet boundary conditions, inverse Lax-Wendroff (ILW) method is used to get boundary derivatives (if applicable) or analytic boundary derivatives are used for the purpose of verifying the operators. Take the following parabolic equations as an example,  
\[
u_t  = g(u)_{xx}. 
\]
To get the even derivatives required for $\mtD_0[g(u)](x)$ in \eqref{eq:expression0}, we use boundary derivatives to get: 
\[
\partial_x^{2m} g(u)|_{\partial\Omega} =   \partial_t^{m} u|_{\partial\Omega}.
\]
Similar process applies to (nonlinear) advection problems and Hamilton-Jacobian problems; see more details in \cite{christlieb2020kernel, christlieb2019kernel,christlieb2020}}
\end{itemize}
\section{Numerical results}\label{sec:result}
In this section, we present several numerical results of the proposed scheme.  While we have demonstrated the method for a range of non-linear problems in the past \cite{christlieb2019kernel,christlieb2020}, here we wanted to focus on the theory as it relates boundary conditions for a range of operators.  thus we focus on linear problems here.   The linear test problems considered include the: 1D diffusion equation; 1D wave equation; 1D convection-diffusion; 2D convection; 2D diffusion on a non-uniform mesh; 2D wave equation on a circular domain.  These are considered for a range of boundary conditions, including: Periodic, Dirichlet; time-independent Dirichlet; Inflow; and Neumann.  We present data that supports the theory in this paper, demonstrating that the methods can achieve the desired order of accuracy.  

\paragraph{1D examples} We start with our 1D examples.  
\begin{exa}\label{ex:diffusive}
    We test the accuracy of the scheme for the one-dimensional linear diffusion problem:
\begin{align} 
\left\{\begin{array}{ll}
u_{t} = q \,u_{xx}, & a < x < b,\\
u(x,0)= f(x).
\end{array}
\right.
\end{align}
Here, $q\geq0$ is a given constant. We can test this problem with different boundary conditions and initial conditions.

(1) Periodic boundary condition: Let $a = -\pi, b = \pi$, $f(x) = \sin(x)$, with periodic boundary imposed and then the exact solution $u^e(x,t) = \sin(x-t)$.  

% Our general operator can still be applied to periodic boundary conditions. In  Table \ref{tab:ex_diff_periodic}, we present the convergence study with the $L_\infty$-errors and the associated orders of accuracy and it is observed that $k$-th order accuracy is achieved for $k = 2$ and $k = 3$ and second order accuracy is observed for the case $k = 1$,  which are consistent with the results of the previous kernel-based operators.  

(2) Time-independent Dirichlet boundary condition: Let $a = 0, b = 1$ $u(0,t) = 0, u(1,t) = 1$ and then the exact solution
$u^{e}(x,t)= \sum\limits_{n = 1}^{\infty}b_n \sin(n \pi x)e^{-n^2\pi^2t} + x$, where $\sum\limits_{n = 1}^{\infty}b_n \sin(n \pi x) + x = f(x)$. We test the case when $q = 1$ and $f(x) = \sin(\pi x) + x$ at final time $T = 2$. 

% In Table \ref{tab:ex_diff_dir}, we present the convergence study with the $L_\infty$-errors and the associated orders of accuracy and it is observed that $k$-th order accuracy is achieved for $k = 2$ and $k = 3$ and second order accuracy is observed for the case $k = 1$. The superconvergence for the first order scheme, with $k = 1$, is expected since the first order scheme for the general operator is exact same as the previous kernel-base operators and thus, when applied to the linear problem, is equivalent to the second order Crank-Nicolson scheme \cite{christlieb2016weno}. 

(3) Inflow Dirichlet boundary condition: Let $a = -1, b = 1$, $u(-1,t) = - e^{-\frac{1}{2}-\frac{1}{4}t}$ and $u(1,t) = e^{\frac{1}{2}-\frac{1}{4}t}$, $f(x) = (\sin(\pi x) + x)e^{0.5x}$, and then the exact solution
$u^{e}(x,t)=e^{-\frac{1}{4}t - \pi^2 t +\frac{1}{2}x}\sin(\pi x) + xe^{\frac{1}{2}x-\frac{1}{4}t}$. The numerical test is done at final time $T = 0.5$. 

% In Table \ref{tab:ex_diff_inflow}, we summarize the convergence study at final time $T = 0.5$, and the $L_\infty$-errors and the associated orders of
% accuracy are provided. 

(4) Neumann boundary condition: Let $a = 0, b = 1$, $u_x(0,t) = \pi e^{-\pi^2 t} + e^{t}$ and $u_x(1,t) = -\pi e^{-\pi^2 t} + e^{1+t}$, $f(x) = \sin(\pi x) + e^{x}$, and then the exact solution
$u^{e}(x,t)=\sin(\pi x)e^{-\pi^2 t} + e^{x+t}$. The numerical test is done at final time $T = 0.5$. 

% In Tables \ref{tab:ex_diff_Neu}, we summarize the convergence study at final time $T = 0.5$, and the $L_\infty$-errors and the associated orders of
% accuracy are provided. 
\end{exa} 

In Table \ref{tab:ex_diff_periodic}, Table \ref{tab:ex_diff_dir}, Table \ref{tab:ex_diff_inflow}, and Table \ref{tab:ex_diff_Neu}, we summarize the convergence study for the periodic boundary condition, time-independent Dirichlet boundary condition, inflow Dirichlet boundary condition, and Neumann boundary condition, respectively. The $L_\infty$-errors and corresponding orders of accuracy are also presented. It is observed that the use of the $k$-th partial sum yields $k$-th order
accuracy, thereby verifying the analysis presented in Sect.\ref{sec:newbdyterm}. In particular, for the case of periodic boundary conditions and time-independent Dirichlet boundary conditions, second order accuracy is observed for the case $k = 1$. The superconvergence for the first order scheme, with $k = 1$, is expected since the first order scheme for the general operator is exact same as the previous kernel-base operators and thus, when applied to the linear problem, is equivalent to the second order Crank-Nicolson scheme \cite{christlieb2016weno}.  Moreover, the scheme allows
for large CFL numbers due to its unconditional stability.

%%% Linear Example with Dirichlet Bdy
\begin{table}[htbp]
	\centering
\vspace{2mm}
	\begin{small}
		\begin{tabular}{|c|c|cc|cc|cc|}
			\hline
			\multirow{2}{*}{CFL} &  \multirow{2}{*}{$N$} & \multicolumn{2}{c|}{$k=1$.} & \multicolumn{2}{c|}{$k=2$.} & \multicolumn{2}{c|}{$k=3$.}\\
			\cline{3-8}
			& &  error &   order  &  error &  order  &  error  & order  \\\hline
			\multirow{5}{*}{0.5} 
            % &  10  &                                              6.14e-03 &   --  \\
            &  20  & 3.97e-03 &   --& 5.44e-02 &   --& 1.34e-02 &   --\\
			&  40  & 1.01e-03 & 1.972& 1.41e-02 & 1.951& 1.79e-03 & 2.905\\
			&  80  & 2.56e-04 & 1.981& 3.58e-03 & 1.973& 2.30e-04 & 2.965\\
			& 160  & 6.41e-05 & 1.998& 8.97e-04 & 1.998 & 2.88e-05 & 2.994\\
			& 320  & 1.60e-05 & 1.999& 2.24e-04 & 1.999& 3.61e-06 & 2.998\\
            \hline   
			\multirow{5}{*}{1} 
            &  20  & 1.53e-02 &   -- &1.92e-01 &   -- & 9.36e-02 &   --\\
			&  40  & 3.98e-03 & 1.943 & 5.43e-02 & 1.823& 1.36e-02 & 2.781 \\
            &  80  &1.01e-03 & 1.977 & 1.41e-02 & 1.946& 1.80e-03 & 2.920\\
            & 160  & 2.56e-04 & 1.981 & 3.58e-03 & 1.976&2.30e-04 & 2.968 \\
            & 320  & 6.41e-05 & 1.999 & 8.97e-04 & 1.998& 2.88e-05 & 2.996\\
            \hline    
			\multirow{5}{*}{2}  
            &  20  & 5.85e-02 &   --& 5.21e-01 &   --& 4.19e-01 &   --\\
			&  40  & 1.54e-02 & 1.927& 1.92e-01 & 1.438&  9.36e-02 & 2.160\\
            &  80  & 3.99e-03 & 1.949 &5.45e-02 & 1.819 & 1.37e-02 & 2.778\\
            & 160  & 1.01e-03 & 1.977&1.41e-02 & 1.950 & 1.80e-03 & 2.924\\
             & 320 & 2.56e-04 & 1.982&3.58e-03 & 1.976 &  2.30e-04 & 2.968\\
            \hline   
		\end{tabular}
	\end{small}	
	\caption{\label{tab:ex_diff_periodic} $L_{\infty}$-errors and orders of accuracy for Example \ref{ex:diffusive} (1) with periodic boundary conditions at $T=2$. }
\end{table}

\begin{table}[htbp]
	\centering
\vspace{2mm}
	\begin{small}
		\begin{tabular}{|c|c|cc|cc|cc|}
			\hline
			\multirow{2}{*}{CFL} &  \multirow{2}{*}{$N$} & \multicolumn{2}{c|}{$k=1$.} & \multicolumn{2}{c|}{$k=2$.} & \multicolumn{2}{c|}{$k=3$.}\\
			\cline{3-8}
			& &  error &   order  &  error &  order  &  error  & order  \\\hline
			\multirow{5}{*}{0.5} 
            % &  10  &                                              6.14e-03 &   --  \\
            % &  20  & 7.11e-04 & --  &        2.01e-03 &   -- &   2.00e-03 & -- \\
			&  40  &  1.79e-04 & --  &     5.43e-04 & -- &  4.53e-04 & -- \\
			&  80  &  4.50e-05 & 1.996  &     1.45e-04 & 1.908 &  8.21e-05 & 2.464 \\
			& 160  &  1.13e-05 & 1.999  &     3.76e-05 & 1.943 &  1.27e-05 & 2.696 \\
			& 320  &  2.81e-06 & 2.000 &     9.61e-06 & 1.968 &  1.77e-06 & 2.838 \\
			& 640  &  7.04e-07 & 1.998 &     2.43e-06 & 1.984 &  2.35e-07 & 2.914 \\
            & 1280 &  1.79e-07 & 1.977  &     5.91e-07 & 2.041 &  2.66e-08 & 3.143 \\
            % & 2560 &  1.72e-04 & 1.000  &   3.21e-07 & 0.878 &  2.97e-08 & -0.161              \\
            % & 5120 &  8.61e-05 & 1.000 &   7.63e-07 & -1.247 &  2.59e-07 & -3.123\\
            % & 10240 & 4.31e-05 & 1.000  &  & & &\\
            % & 20480 & 9.54e-05 & -1.148 &  & & &    \\
            \hline   
			\multirow{5}{*}{1} 
   %          &  20  &  2.71e-03 & --  &   3.58e-02 &   --  & 6.50e-03 &   --\\
			&  40  &  7.10e-04 & --  &  7.80e-03 & -- &  2.00e-03 & --\\
            &  80  &  1.79e-04 & 1.985  & 2.01e-03 & 1.957 &  4.53e-04 & 2.143\\
            & 160  &  4.50e-05 & 1.996  &  5.43e-04 & 1.888  & 8.21e-05 & 2.464 \\
            & 320  &  1.13e-05 & 1.999  &  1.45e-04 & 1.908  &  1.27e-05 & 2.696\\
            & 640  &  2.81e-06 & 1.999  &  3.76e-05 & 1.943 & 1.77e-06 & 2.839\\
            & 1280 &  7.07e-07 & 1.993  &  9.61e-06 & 1.969 &  2.41e-07 & 2.878\\
            %& 2560 &  &  &  2.47e-06 & 1.958 &  3.60e-08 & 2.742\\
            % & 5120 &  &  &  7.89e-07 & 1.649 &  2.60e-07 & -2.849 \\
            \hline    
			\multirow{5}{*}{2}  
            % &  20  & 7.18e-03 & -- &   1.29e-01 &   --  & 9.59e-03 &   --\\
			&  40  &  2.70e-03 &   --&  3.58e-02 &   --  & 6.50e-03 & -- \\
            &  80  &   7.10e-04 & 1.929 &  7.80e-03 & 2.198  & 2.00e-03 & 1.701 \\
            & 160  &   1.79e-04 & 1.985 &  2.01e-03 & 1.957  &4.53e-04 & 2.143 \\
             & 320  &  4.50e-05 & 1.996  &  5.43e-04 & 1.888  & 8.21e-05 & 2.464 \\
            & 640  &  1.13e-05 & 1.999 &  1.45e-04 & 1.908  & 1.27e-05 & 2.696 \\
            & 1280 &  2.83e-06 & 1.991 &   3.76e-05 & 1.942  & 1.76e-06 & 2.851 \\
            % & 2560 &  6.89e-04 & 1.000  & 9.66e-06 & 1.962  & 2.98e-07 & 2.558 \\
            % & 5120 &  3.44e-04 & 1.000  &  2.16e-06 & 2.162  & 1.70e-07 & 0.809 \\
            % & 10240 & 1.82e-04 & 0.918  &  3.51e-06 & -0.701 & &   7.98e-04 & -2.130 \\
            % & 20480 & 7.98e-04 & -2.130 &  & & &    \\
            \hline   
		\end{tabular}
	\end{small}	
	\caption{\label{tab:ex_diff_dir} $L_{\infty}$-errors and orders of accuracy for Example \ref{ex:diffusive} (2) with non-homogeneous {\bf time-independent} Dirichlet boundary conditions at $T=2$. }
\end{table}

\begin{table}[htbp] %update 07.18
	\centering
\vspace{2mm}
	\begin{small}
		\begin{tabular}{|c|c|cc|cc|cc|}
			\hline
			\multirow{2}{*}{CFL} &  \multirow{2}{*}{$N$} & \multicolumn{2}{c|}{$k=1$.} & \multicolumn{2}{c|}{$k=2$.} & \multicolumn{2}{c|}{$k=3$.}\\
			\cline{3-8}
			& &  error &   order  &  error &  order  &  error  & order  \\\hline
			\multirow{5}{*}{0.5}  
            % &  10  &  4.37e-02 &   -- & 7.41e-03 &   -- & 2.05e-03 &   -- \\
            &  20  &  2.20e-02 & --  & 1.88e-03 & -- & 4.61e-04 & -- \\
			&  40  &  1.10e-02 & 0.997  & 5.05e-04 & 1.898 & 8.33e-05 & 2.469 \\
			&  80  &  5.50e-03 & 0.998  &   1.34e-04 & 1.912  &  1.28e-05 & 2.697  \\
			& 160  &  2.75e-03 & 0.999  &  3.49e-05 & 1.944 & 1.79e-06 & 2.839  \\
			& 320  &  1.38e-03 & 1.000  &   8.92e-06 & 1.968 & 2.38e-07 & 2.917 \\
			& 640  &  6.89e-04 & 1.000  &  2.25e-06 & 1.987 &  3.06e-08 & 2.958\\
            \hline
			\multirow{5}{*}{1} 
            % &  10  &  8.67e-02 &   -- & 3.47e-02 &   -- & 6.86e-03 &   --  \\
            &  20  &  4.37e-02 & --  & 7.41e-03 & -- & 2.05e-03 & -- \\
			&  40  &  2.20e-02 & 0.994  &   1.88e-03 & 1.977& 4.61e-04 & 2.155 \\
            &  80  &1.10e-02 & 0.997 & 5.05e-04 & 1.898 & 8.33e-05 & 2.469 \\
            & 160  &  5.50e-03 & 0.998 &   1.34e-04 & 1.912 & 1.28e-05 & 2.697 \\
            & 320  &  2.75e-03 & 0.999  &   3.49e-05 & 1.944 & 1.79e-06 & 2.842 \\
            & 640  &  1.38e-03 & 1.000 &  8.89e-06 & 1.972  & 2.68e-07 & 2.741 \\ 
            \hline
			\multirow{5}{*}{2}  
			% &  10  &  1.50e-01 &   --& 1.28e-01 &   -- & 1.07e-02 &   -- \\
            &  20  &  8.67e-02 & --  &3.47e-02 & --  & 6.86e-03 & -- \\
			&  40  &  4.37e-02 & 0.988  &7.41e-03 & 2.225  &2.05e-03 & 1.741 \\
			&  80  &  2.20e-02 & 0.994  & 1.88e-03 & 1.977     &  4.61e-04 & 2.155  \\
			& 160  &  1.10e-02 & 0.997  &  5.05e-04 & 1.898 & 8.33e-05 & 2.469 \\
			& 320  &  5.50e-03 & 0.998  &   1.34e-04 & 1.912 & 1.28e-05 & 2.697 \\
			& 640  &  2.75e-03 & 0.999  &  3.49e-05 & 1.942 &  1.72e-06 & 2.901\\
            \hline 
		\end{tabular}
	\end{small}	
	\caption{\label{tab:ex_diff_inflow} $L_{\infty}$-errors and orders of accuracy for Example \ref{ex:diffusive} (3) with non-homogeneous inflow Dirichlet boundary conditions at $T=0.5$. }
\end{table}

\begin{table}[htbp]
	\centering
\vspace{2mm}
	\begin{small}
		\begin{tabular}{|c|c|cc|cc|cc|}
			\hline
			\multirow{2}{*}{CFL} &  \multirow{2}{*}{$N$} & \multicolumn{2}{c|}{$k=1$.} & \multicolumn{2}{c|}{$k=2$.} & \multicolumn{2}{c|}{$k=3$.}\\
			\cline{3-8}
			& &  error &   order  &  error &  order  &  error  & order  \\\hline
			\multirow{5}{*}{0.5} 
            % & 10 & 2.95e-04 &   --  & 2.06e-03 &   -- & 4.48e-01 &   --&\\
            % & 20 &  1.61e-01 & 1.196 & 8.79e-03 & -- & 5.60e-02 & -- \\
            & 40 &  7.50e-02 & -- & 2.21e-03 & -- & 7.00e-03 & --\\
            & 80 &  3.62e-02 & 1.051 & 5.58e-04 & 1.987 &8.76e-04 & 2.999 \\
            & 160 & 1.78e-02 & 1.026  & 1.41e-04 & 1.989 & 1.10e-04 & 2.998\\
            & 320 & 8.81e-03 & 1.013  & 3.54e-05 & 1.991 & 1.37e-05 & 2.997\\ 
            & 640 &  4.39e-03 & 1.006 & 8.94e-06 & 1.984 & 1.72e-06 & 2.999 \\
            \hline   
			\multirow{5}{*}{1} 
            % & 10 & 1.33e-03 &   -- & 8.00e-03 &   --    & & \\
            % & 20 & 3.68e-01 &   -- & 3.78e-02 & -- & 4.48e-01 & -- \\
            & 40 & 1.61e-01 & -- & 9.36e-03 & -- & 5.59e-02 & --\\
            & 80 & 7.50e-02 & 1.100 & 2.35e-03 & 1.994  & 6.99e-03 & 3.000 \\
            & 160 & 3.62e-02 & 1.051 & 5.92e-04 & 1.989 & 8.75e-04 & 2.999 \\
            & 320 & 1.78e-02 & 1.026 & 1.49e-04 & 1.990 & 1.10e-04 & 2.998 \\
            & 640 & 8.81e-03 & 1.013 & 3.73e-05 & 1.997 & 1.36e-05 & 3.006 \\
            \hline    
			\multirow{5}{*}{2}  
            % & 10 & 3.60e-03 &   -- & 2.73e-02 &   --    & 2.95e+01 &   -- \\
            % & 20 & 9.50e-01 &   -- & 1.58e-01 & --  & 3.65e+00 & -- \\
            & 40 & 3.68e-01 & -- &  3.81e-02 & -- & 4.47e-01 &   --\\
            & 80 & 1.61e-01 & 1.194 &  9.43e-03 & 2.015  & 5.59e-02 & 3.000\\
            & 160 & 7.50e-02 & 1.100 &  2.37e-03 & 1.994  &6.99e-03 & 3.000 \\
            & 320 & 3.62e-02 & 1.051 &  5.96e-04 & 1.990  & 8.75e-04 & 2.999\\
            & 640 & 1.78e-02 & 1.026 &  1.50e-04 & 1.991  & 1.10e-04 & 2.998\\
            % & 1280 &         &       &           &        & 1.37e-05 & 2.998 \\
            \hline   
		\end{tabular}
	\end{small}	
	\caption{\label{tab:ex_diff_Neu} $L_{\infty}$-errors and orders of accuracy for Example \ref{ex:diffusive} (4) with Neumann boundary conditions at $T=0.5$. }
\end{table}

\begin{exa}\label{ex:1d-wave}
    We test the accuracy of the scheme for the one-dimensional wave problem:
\begin{align} 
\left\{\begin{array}{ll}
u_{tt} = q \,u_{xx}, & a < x < b,\\
u(x,0)= f(x), \\
u_t(x,0) = g(x).
\end{array}
\right.
\end{align}
Here, $q \ge 0$ is a given constant. We test this problem with different boundary conditions and initial conditions.

(1) Dirichlet boundary condition: Let $a = \frac{1}{2}$, $b=\frac{3}{2}$, $f(x) = x$,  $g(x) = -\pi \sin(\pi x)$, $q = 1$ and the boundary condition $$u(\frac{1}{2},t) = \frac{1}{2} + \cos((t+\frac{1}{2})\pi),\quad u(\frac{3}{2},t) = \frac{3}{2} + \cos((t+\frac{3}{2})\pi).$$ Then the exact solution is $$u^e = x + \frac{1}{2}(\cos (\pi(x+t)) - \cos (\pi(x-t))).$$

(2) Periodic boundary condition: Let $a = 0$, $b= 2$, $f(x) = \cos (\pi x)$,  $g(x) = -\pi \sin(\pi x)$, $q = 1$ and the problem holds periodic boundary condition, then the exact solution is $$u^e = \cos (\pi(x+t)).$$

In Table \ref{tab:1d-wave_dir} and Table \ref{tab:1d-wave_per}, we present the convergence study, including the 
$L_\infty$-errors and the corresponding orders of accuracy for the Dirichlet and periodic boundary conditions, respectively. The numerical results showed that $k$-th order accuracy is achieved for the $k$-th partial sum approximation to the operator, even with a large CFL. 
\end{exa}

\begin{table}[htbp]
	\centering
\vspace{2mm}
	\begin{small}
		\begin{tabular}{|c|c|cc|cc|cc|}
			\hline
			\multirow{2}{*}{CFL} &  \multirow{2}{*}{$N$} & \multicolumn{2}{c|}{$k=1$.} & \multicolumn{2}{c|}{$k=2$.} & \multicolumn{2}{c|}{$k=3$.}\\
			\cline{3-8}
			& &  error &   order  &  error &  order  &  error  & order  \\\hline
			\multirow{5}{*}{0.5} 
            & 20 &     1.59e-01 &   --         &  3.16e-03 &   --    &  3.27e-04 &   --  \\
            & 40 &     7.89e-02 & 1.007         &  5.90e-04 & 2.423    &   3.04e-05 & 3.423 \\
            & 80 &     3.94e-02 & 1.004        &   1.12e-04 & 2.394    &   2.20e-06 & 3.790 \\
            & 160 &    1.97e-02 & 1.002         &  2.52e-05 & 2.152     &  1.72e-07 & 3.681  \\
            & 320 &    9.82e-03 & 1.001         &   6.31e-06 & 2.000    &  1.71e-08 & 3.324  \\ 
            & 640 &    4.91e-03 & 1.000         &   1.58e-06 & 2.000    &  1.80e-09 & 3.248  \\
            % & 1280 & 1.41e-03 & 1.000  &   4.17e-06 & 1.940  &       \\
            \hline   
			\multirow{5}{*}{1} 
            & 20 &  3.20e-01 &   -- &     1.32e-02 &   --      &2.14e-03 &   --       \\
            & 40 &  1.59e-01 & 1.012 &     3.18e-03 & 2.052       &3.27e-04 & 2.712    \\
            & 80 &  7.89e-02 & 1.007 &     5.90e-04 & 2.430       &3.05e-05 & 3.422        \\
            & 160 & 3.94e-02 & 1.004  &    1.12e-04 & 2.395       &2.20e-06 & 3.791        \\
            & 320 & 1.97e-02 & 1.002  &    2.52e-05 & 2.152       &1.73e-07 & 3.667        \\
            & 640 & 9.82e-03 & 1.001  &    6.31e-06 & 2.000       &1.65e-08 & 3.392        \\
            % & 1280 & &          &        \\
            \hline    
			\multirow{5}{*}{2}  
            & 20 &  6.87e-01 &   --  & 3.96e-02 &   --   &  8.32e-03 &   --\\
            & 40 &  3.20e-01 & 1.103  &  1.32e-02 & 1.580    & 2.14e-03 & 1.957\\
            & 80 &  1.59e-01 & 1.012  &  3.18e-03 & 2.057   & 3.27e-04 & 2.712 \\
            & 160 &  7.89e-02 & 1.007  & 5.90e-04 & 2.430   &  3.05e-05 & 3.422\\
            & 320 &  3.94e-02 & 1.004  & 1.12e-04 & 2.395   &  2.21e-06 & 3.789\\
            & 640 &  1.97e-02 & 1.002  & 2.52e-05 & 2.152   &  2.04e-07 & 3.436\\
            % & 1280 &0  &  6.06e-05 & 1.849    &   \\
            \hline   
		\end{tabular}
	\end{small}	
	\caption{\label{tab:1d-wave_dir} $L_{\infty}$-errors and orders of accuracy for Example \ref{ex:1d-wave} with Dirichlet boundary conditions at $T = 1$. }
\end{table}

\begin{table}[htbp]
	\centering
\vspace{2mm}
	\begin{small}
		\begin{tabular}{|c|c|cc|cc|cc|}
			\hline
			\multirow{2}{*}{CFL} &  \multirow{2}{*}{$N$} & \multicolumn{2}{c|}{$k=1$.} & \multicolumn{2}{c|}{$k=2$.} & \multicolumn{2}{c|}{$k=3$.}\\
			\cline{3-8}
			& &  error &   order  &  error &  order  &  error  & order  \\\hline
			\multirow{5}{*}{0.5} 
            & 20 &    4.83e-01 &   --&  1.75e-01 &   -- &    9.08e-02 &   -- \\
            & 40 &     2.36e-01 & 1.035 &  6.03e-02 & 1.537  &2.08e-02 & 2.128      \\
            & 80 &     1.16e-01 & 1.018 &   1.83e-02 & 1.718  & 3.75e-03 & 2.469     \\
            & 160 &    5.78e-02 & 1.010 &  5.11e-03 & 1.841   & 5.76e-04 & 2.703     \\
            & 320 &    2.88e-02 & 1.005 &  1.36e-03 & 1.915   &  8.04e-05 & 2.843    \\ 
            & 640 &    1.44e-02 & 1.000  &    3.50e-04   &  1.958 &     1.06e-05 & 2.919  \\
            \hline   
			\multirow{5}{*}{1} 
            & 20 & 9.85e-01 &   -- & 4.32e-01 &   --   &  2.98e-01 &   --\\
            & 40 & 4.83e-01 & 1.028 &       1.75e-01 & 1.306   &  9.08e-02 & 1.715 \\
            & 80 & 2.36e-01 & 1.034 &       6.03e-02 & 1.537   &  2.08e-02 & 2.128     \\
            & 160 & 1.16e-01 & 1.019 &      1.83e-02 & 1.718   &  3.75e-03 & 2.469     \\
            & 320 & 5.78e-02 & 1.010 &      5.11e-03 & 1.841   &   5.76e-04 & 2.703    \\
            & 640 &   2.88e-02 & 1.005  &   1.36e-03 &  1.910 &   8.04e-05 & 2.843    \\
            \hline    
			\multirow{5}{*}{2}  
            & 20 &1.92e+00 &   -- &   9.24e-01 &   --&    7.47e-01 &   -- \\
            & 40 & 9.91e-01 & 0.952 &  4.37e-01 & 1.078    &  3.01e-01 & 1.312   \\
            & 80 & 4.83e-01 & 1.037 &  1.75e-01 & 1.318   &   9.09e-02 & 1.727   \\
            & 160 &2.36e-01 & 1.033  & 6.03e-02 & 1.541   &   2.08e-02 & 2.129   \\
            & 320 & 1.16e-01 & 1.019 & 1.83e-02 & 1.718   &   3.75e-03 & 2.469   \\
            & 640 &   5.78e-02 & 1.010   &   5.11e-03 & 1.841    &    5.76e-04 & 2.703   \\
            \hline   
		\end{tabular}
	\end{small}	
	\caption{\label{tab:1d-wave_per} $L_{\infty}$-errors and orders of accuracy for Example \ref{ex:1d-wave} with periodic boundary conditions at $T = 1$. }
\end{table}

\begin{exa}\label{ex:1dAdC}
We test the accuracy of the scheme for the one-dimensional linear convection-diffusion problem
\begin{align}
\left\{\begin{array}{ll}
u_{t}+c\,u_{x}=q\,u_{xx}, & -\frac{1}{2}\leq x\leq \frac{1}{2},\\
u(x,0)=\sin(\pi x)e^{0.5 x},
\end{array}
\right.
\end{align}
with the Dirichilet boundary condition $u(-\frac{1}{2},t) = -e^{-\frac{1}{4} t - \pi^2 t -\frac{1}{4}}$ and $u(\frac{1}{2},t) =e^{-\frac{1}{4} t - \pi^2 t +\frac{1}{4}} $.
Here, $c$ and $q\geq0$ are given constants. Let $c = q = 1$. This problem has the exact solution
$u^{e}(x,t)=e^{-\frac{1}{4} t - \pi^2 t +\frac{1}{2} x}\sin(\pi x).$

In Table \ref{tab:1d_adv}, we summarize the convergence study at final time $T = 0.5$, and the $L_\infty$-errors and the associated orders of accuracy are provided. Even though a large CFL is used, orders of accuracy for schemes with $k = 1$,2, and 3, are shown to be $k^{th}$-order. 
\end{exa}

\begin{table}[htbp]
	\centering
\vspace{2mm}
	\begin{small}
		\begin{tabular}{|c|c|cc|cc|cc|}
			\hline
			\multirow{2}{*}{CFL} &  \multirow{2}{*}{$N$} & \multicolumn{2}{c|}{$k=1$.} & \multicolumn{2}{c|}{$k=2$.} & \multicolumn{2}{c|}{$k=3$.}\\
			\cline{3-8}
			& &  error &   order  &  error &  order  &  error  & order  \\\hline
			\multirow{5}{*}{0.5} 
            & 20 &      5.00e-02 &   --    &  6.70e-03 &   -- &      2.40e-03 &   --   \\
            & 40 &      2.47e-02 & 1.015     & 1.85e-03 & 1.857   &   4.28e-04 & 2.491       \\
            & 80 &      1.23e-02 & 1.008    &  4.81e-04 & 1.943   &   6.46e-05 & 2.726       \\
            & 160 &     6.13e-03 & 1.004     & 1.21e-04 & 1.996    &  8.85e-06 & 2.868        \\
            & 320 &     3.06e-03 & 1.002     & 2.97e-05 & 2.022    &  1.15e-06 & 2.944        \\ 
            & 640 &     1.53e-03 & 1.001     & 7.28e-06 & 2.030    &  1.46e-07 & 2.975        \\
            \hline   
			\multirow{5}{*}{1} 
            & 20 &1.02e-01 &   --   &      2.26e-02 &   --     &  1.09e-02 &   -- \\
            & 40 & 5.00e-02 & 1.029  &     6.71e-03 & 1.755      & 2.41e-03 & 2.182   \\
            & 80 & 2.47e-02 & 1.015  &     1.85e-03 & 1.858      & 4.28e-04 & 2.493       \\
            & 160 & 1.23e-02 & 1.008  &    4.81e-04 & 1.943      & 6.47e-05 & 2.727      \\
            & 320 & 6.13e-03 & 1.004  &    1.21e-04 & 1.996      & 8.86e-06 & 2.868      \\
            & 640 & 3.06e-03 & 1.002  &    2.97e-05 & 2.022      & 1.15e-06 & 2.946      \\
            \hline    
			\multirow{5}{*}{2}  
            & 20 & 2.12e-01 &   -- &   6.98e-02 &   -- &       4.20e-02 &   --    \\
            & 40 & 1.02e-01 & 1.057 &  2.26e-02 & 1.625   &    1.09e-02 & 1.941       \\
            & 80 & 5.00e-02 & 1.029 &  6.71e-03 & 1.755  &     2.41e-03 & 2.182       \\
            & 160 & 2.47e-02 & 1.015 & 1.85e-03 & 1.858  &     4.28e-04 & 2.493       \\
            & 320 & 1.23e-02 & 1.008 & 4.81e-04 & 1.943  &     6.47e-05 & 2.727       \\
            & 640 & 6.13e-03 & 1.004 & 1.21e-04 & 1.996  &     8.86e-06 & 2.868       \\
            \hline   
		\end{tabular}
	\end{small}	
	\caption{\label{tab:1d_adv} $L_{\infty}$-errors and orders of accuracy for Example \ref{ex:1dAdC} at $T = 0.5$. }
\end{table}

% \textbf{Example 4:} We test the accuracy of the scheme for the one-dimensional Burgers' equation in the Hamilton-Jacobian form:
% \begin{align}
% \left\{\begin{array}{ll}
% \phi_{t}+\frac{1}{2}(\phi_x+1)^2=0, & -1 \leq x\leq 1,\\
% \phi(x,0)=-\cos(\pi x),
% \end{array}
% \right.
% \end{align}
% %  and in the conservation form and also test the shock-speed capture ability. 
% % \begin{align}
% % \left\{\begin{array}{ll}
% % u_{t}+(\frac{1}{2}(u+1)^2)_x=0, & -1 \leq x\leq 1,\\
% % u(x,0)=\pi \sin(\pi x),
% % \end{array}
% % \right.
% % \end{align}

% (1) Dirichlet boundary condition

% % (2) Neumann boundary condition: 

% (2) Periodic boundary condition:

\paragraph{2D examples} Next we present some two-dimensional experiments here and demonstrate that our kernel-based approximations to the differential operator can deal with 2D Hamilton-Jacobian equations and wave equations which are involved with the first derivative operator and second derivative operator separately. 

\begin{exa}\label{ex:adv}
    We test the accuracy of the scheme for the two-dimensional convection equation
    \begin{align}
\left\{\begin{array}{ll}
u_{t} + (u_{x} + u_{y}) = 0, & -2 \leq x,y \leq 2,\\
u(x,y,0)= -\cos (\pi(x+y)/2),
\end{array}
\right.
\end{align}
in the Hamilton-Jacobian form with Dirichlet boundary conditions   $u(-2,y,t)=-\cos(\pi((-2+y)/2-t))$
and $u(x,-2,t)=-\cos(\pi((x-2)/2-t))$. The exact solution is $u(x,y,t)=-\cos(\pi((x+y)/2-t))$.
In Table \ref{tab:adv}, we test the problem in uniform meshes and present the $L_\infty$ errors and the orders of accuracy, which show the desired results.
\end{exa}

\begin{table}[htbp]
	\centering
\vspace{2mm}
	\begin{small}
		\begin{tabular}{|c|c|cc|cc|cc|}
			\hline
			\multirow{2}{*}{CFL} &  \multirow{2}{*}{$N$} & \multicolumn{2}{c|}{$k=1$.} & \multicolumn{2}{c|}{$k=2$.} & \multicolumn{2}{c|}{$k=3$.}\\
			\cline{3-8}
			& &  error &   order  &  error &  order  &  error  & order  \\\hline
			\multirow{5}{*}{0.5} 
            &$ 20 \times  20$ &  1.83e-02 &   --   &  1.76e-01 &   --   & 3.14e-02 &   --    \\
            &$ 40 \times  40$ &  4.50e-03 & 2.025  &  4.56e-02 & 1.946  & 3.00e-03 & 3.390   \\
            &$ 80 \times  80$ &  1.12e-03 & 2.008  &  1.16e-02 & 1.981  & 3.18e-04 & 3.235   \\
            &$160 \times 160$ &  2.79e-04 & 2.003  &  2.91e-03 & 1.991  & 4.48e-05 & 2.827   \\
            &$320 \times 320$ &  6.98e-05 & 2.001  &  7.29e-04 & 1.995  & 5.48e-06 & 3.033   \\ 
            % & 640 &     &           &          \\
            \hline   
			\multirow{5}{*}{1} 
            &$ 20 \times  20$ &  7.29e-02 &   --   &  5.89e-01 &   --   &  1.28e-01 &   --     \\
            &$ 40 \times  40$ &  1.80e-02 & 2.018  &  1.75e-01 & 1.749  &  1.92e-02 & 2.737    \\
            &$ 80 \times  80$ &  4.47e-03 & 2.008  &  4.56e-02 & 1.942  &  2.60e-03 & 2.880    \\
            &$160 \times 160$ &  1.12e-03 & 2.002  &  1.16e-02 & 1.981  &  3.37e-04 & 2.950    \\
            &$320 \times 320$ &  2.79e-04 & 2.000  &  2.91e-03 & 1.991  &  4.28e-05 & 2.978    \\ 
            % & 640 &     &           &          \\
            \hline    
			\multirow{5}{*}{2}  
            &$ 20 \times  20$ &  3.06e-01 &   --   &  1.15e+00 &   --   &   3.04e-01 &   --      \\
            &$ 40 \times  40$ &  7.25e-02 & 2.078  &  5.89e-01 & 0.971  &   5.08e-02 & 2.582     \\
            &$ 80 \times  80$ &  1.79e-02 & 2.016  &  1.75e-01 & 1.747  &   8.96e-03 & 2.504     \\
            &$160 \times 160$ &  4.47e-03 & 2.003  &  4.56e-02 & 1.943  &   1.27e-03 & 2.819     \\
            &$320 \times 320$ &  1.12e-03 & 2.001  &  1.16e-02 & 1.981  &   1.68e-04 & 2.917     \\ 
            % & 640 &     &           &          \\
            \hline   
		\end{tabular}
	\end{small}	
	\caption{\label{tab:adv} $L_{\infty}$-errors and orders of accuracy for Example \ref{ex:adv} at $T=2$. }
\end{table}

\begin{exa} \label{ex:squre_wave}
 Non-uniform mesh two-dimensional linear diffusive equation with Dirichlet boundary condition: 
\begin{align}
\left\{\begin{array}{ll}
u_{t} = q (\,u_{xx} + \,u_{yy}), & 0 < x < 1,\\
u(x,0)= f(x),
\end{array}
\right.
\end{align}
with the Dirichlet boundary condition:$$u(0,y,t) = e^{y+t}, u(1,y,t) = e^{1+y+t},u(x,0,t) = e^{x+t}, u(x,1,t) = e^{1+x+t},$$
Here, $q\geq0$ is a given constant. Let $q = 1/2$, $f(x) = \sin(\pi x)\sin(\pi y) + e^{x+y}$, then the exact solution is: 
$$u^{e}(x,y,t)= \sin(\pi x)\sin(\pi y)e^{-\pi^2t} + e^{x+y+t}.$$
\end{exa} 

%\SG{
We use a non-uniform mesh here to do the simulation. In this mesh, he first and last cells along each line are half the length of the middle cells with all the middle cells are uniform in size; see left graph in Figure \ref{fig:square_numerical}. The success of this example can help us deal with simulations on meshes with more complex geometry.  In Table \ref{tab:squre_wave}, it is observed that the proposed scheme achieves the appropriate convergence orders for $k = 2$ and $k = 3$ as it does for the one-dimensional non-uniform mesh case in Example \ref{ex:1d-wave}. And for $k = 1$, first order accuracy is achieved as the scheme expected in Sec. \ref{sec:newbdyterm} and due to the non-uniform mesh, superconvergence is not achieved. 
To enhance clarity, in Figure \ref{fig:square_numerical}, we illustrate the mesh utilized in our simulations and present the numerical solutions using a grid resolution of $N = 320 \times 320$ with $k = 3$. %}

\begin{table}[htbp]
	\centering
\vspace{2mm}
	\begin{small}
		\begin{tabular}{|c|c|cc|cc|cc|}
			\hline
			\multirow{2}{*}{CFL} &  \multirow{2}{*}{$N$} & \multicolumn{2}{c|}{$k=1$.} & \multicolumn{2}{c|}{$k=2$.} & \multicolumn{2}{c|}{$k=3$.}\\
			\cline{3-8}
			& &  error &   order  &  error &  order  &  error  & order  \\\hline
			\multirow{5}{*}{0.5} 
            &$20 \times 20$ &  5.97e-02 &   --  &     1.56e-03 &   --   &    4.65e-04 &   --     \\
            &$40 \times 40$ &   2.99e-02 & 0.997   &   4.12e-04 & 1.920       &  8.45e-05 & 2.460        \\
            &$80 \times 80$ &  1.50e-02 & 0.998   &    1.09e-04 & 1.919       &   1.30e-05 & 2.695       \\
            & $160 \times 160$ &  7.49e-03 & 0.999   &   2.83e-05 & 1.945        &  1.82e-06 & 2.838        \\
            & $320 \times 320$ &  3.74e-03 & 1.000   &   7.24e-06 & 1.966        &  2.48e-07 & 2.879        \\ 
            % & 640 &     &           &          \\
            \hline   
			\multirow{5}{*}{1} 
            & $20 \times 20$ &  1.19e-01 &   --  &   6.41e-03 &   --    &  2.09e-03 &   -- \\
            & $40 \times 40$ &  5.97e-02 & 0.994   &   1.56e-03 & 2.035     &  4.71e-04 & 2.152  \\
            & $80 \times 80$ &  2.99e-02 & 0.997   &   4.12e-04 & 1.924     &   8.51e-05 & 2.469     \\
            & $160 \times 160$  &  1.50e-02 & 0.998   &  1.09e-04 & 1.920     &   1.31e-05 & 2.698    \\
            & $320 \times 320$ &   7.49e-03 & 0.999  &  2.83e-05 & 1.945     &   1.81e-06 & 2.855    \\
            % & 6 0 &     &       &       \\
            \hline    
			\multirow{5}{*}{2}  
            &$20 \times 20$ &    2.36e-01 &   --   &   3.19e-02 &   --   &      7.02e-03 &   --     \\
            &$40 \times 40$ &    1.19e-01 & 0.988    &   6.43e-03 & 2.311     &   2.10e-03 & 1.740        \\
            &$80 \times 80$ &    5.97e-02 & 0.994    &   1.56e-03 & 2.039    &    4.71e-04 & 2.156        \\
            & $160 \times 160$ &   2.99e-02 & 0.997     &  4.12e-04 & 1.924    &    8.51e-05 & 2.470        \\
            & $320 \times 320$ &   1.50e-02 & 0.998     &  1.09e-04 & 1.921    &    1.32e-05 & 2.693        \\
            % & $640 \times 640$ &   &      &   &   &     1.44e-06 & 3.144       \\
            \hline   
		\end{tabular}
	\end{small}	
	\caption{\label{tab:squre_wave} $L_{\infty}$-errors and orders of accuracy for Example \ref{ex:squre_wave} at $T=0.5$. }
\end{table}

\begin{figure}[htbp]
    \centering
    \includegraphics[width=0.35\textwidth]{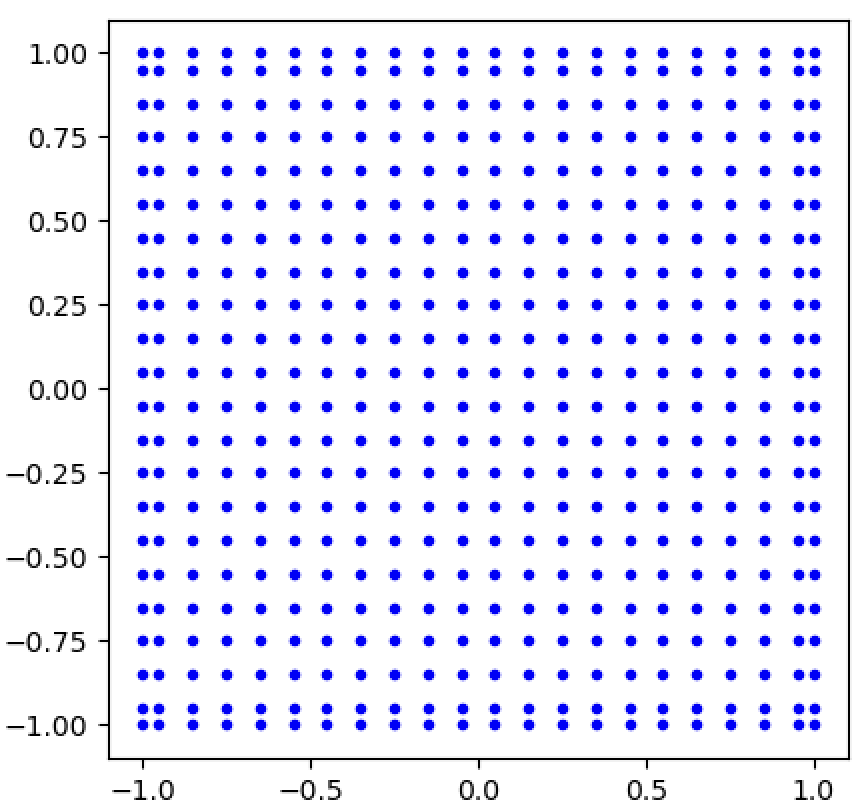}
    \includegraphics[width=0.5\textwidth]{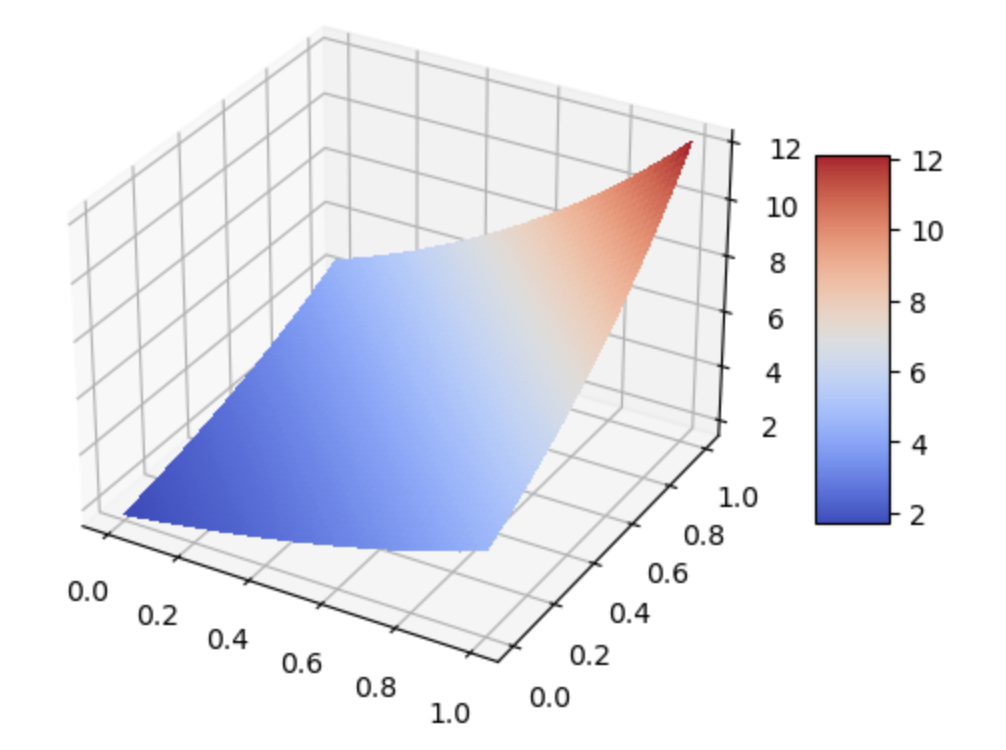}
    \caption{The square nonuniform mesh used and numerical solution of Example \ref{ex:squre_wave} with third order scheme.}
    \label{fig:square_numerical}
\end{figure}

\begin{exa}\label{ex:drumhead}
We test the accuracy of the proposed scheme for the two-dimensional drumhead wave equation with Dirichlet boundary condition. The exact solution is given by separation of variables and for more details, see \cite[Chapter 4.3]{asmar2016partial}.
\begin{align}
\left\{\begin{array}{ll}
u_{tt} = \Delta u, & (x,y) \in \Omega := \{(x,y) \in \mathbb{R}^2: x^2 +y^2 \le 1\},\\
u(x,y,0)= f(x,y,0), \quad u_t(x,y,0) = g(x,y,0), \\
u(\partial \Omega, t) = 0
\end{array}
\right.
\end{align}

(1) For $f(x,y,0) = J_0(k_1 \sqrt{x^2+y^2}), \quad g(x,y,0) = 0,$ where $k_1$ is the first positive zero of the Bessel function of the first kind of order 0, $J_0(x)$.

Our first test is to show that our scheme can solve the problem with the expected accuracy/order. In Table \ref{tab:drumhead}, using a of CFL = 0.5, 1 and 2, we provide the $L_{\infty}$-errors and orders of accuracy for schemes with $k = 1,$ 2 and 3, which are shown to be $k-$th order. 

(2) For $f(x,y,0) = J_0(k_2 \sqrt{x^2+y^2}), \quad g(x,y,0) = 0,$ where $k_2$ is the second positive zero of Bessel function $J_0(x)$.

The numerical solutions using a grid resolution of $N = 200 \times 200$ with $k = 3$ is shown for several times levles in Figure \ref{fig:drumhead_numerical}. As long as the spatial resolution is given appropriately, the scheme can accurately capture the wave shape and we also remark here  that a large CFL can be chosen to solve the problem, due to the  unconditional stability of the method. 

We also present the numerical solutions for both initial conditions at the final time $T = 1$ with a grid of $N = 100 \times 100$ with $k = 3$ in Figure \ref{fig:drumhead2}. 
\end{exa}

\begin{table}[htbp]
	\centering
\vspace{2mm}
	\begin{small}
		\begin{tabular}{|c|c|cc|cc|cc|}
			\hline
			\multirow{2}{*}{CFL} &  \multirow{2}{*}{$N$} & \multicolumn{2}{c|}{$k=1$.} & \multicolumn{2}{c|}{$k=2$.} & \multicolumn{2}{c|}{$k=3$.}\\
			\cline{3-8}
			& &  error &   order  &  error &  order  &  error  & order  \\\hline
			\multirow{5}{*}{0.5} 
            & $20 \times 20$ & 4.53e-02 &   -- &  2.28e-02 &   --&   6.55e-03 &   --   \\
            & $40 \times 40$ & 2.02e-02 & 1.168 &  6.38e-03 & 1.835&   1.14e-03 & 2.519    \\
            & $80 \times 80$ & 9.36e-03 & 1.107 &  1.69e-03 & 1.919&   1.95e-04 & 2.548    \\
            & $160 \times 160$ &4.48e-03 & 1.062 & 4.33e-04 & 1.963 &   3.20e-05 & 2.612    \\
            & $320 \times 320$ & 2.19e-03 & 1.034&  1.09e-04  & 1.984 &      5.02e-06 & 2.669 \\ 
            % & 640 \times 640 &     &      &       \\
            \hline   
			\multirow{5}{*}{1} 
            & $20 \times 20$ &   1.08e-01 &   --    &   7.43e-02 &   --   &   3.26e-02 & --   \\
            &  $40 \times 40$ &   4.55e-02 & 1.240    &   2.27e-02 & 1.707   &  6.28e-03 & 2.374    \\
            & $80 \times 80$ &   2.02e-02 & 1.173    &    6.35e-03 & 1.841  &   1.11e-03 & 2.501   \\
            &  $160 \times 160$ &   9.36e-03 & 1.109   &  1.67e-03 & 1.922    & 1.91e-04 & 2.538     \\
            &  $320 \times 320$ &  4.48e-03 & 1.063    &  4.29e-04 & 1.965    & 3.14e-05 & 2.604      \\
            % & 640 \times 640  &      &      &       \\
            \hline    
			\multirow{5}{*}{2}  
            & $20 \times 20$ &    2.61e-01 &   --  & 2.11e-01 &   --&1.13e-01 &   --  \\
            &  $40 \times 40$ &   1.08e-01 & 1.278   &   7.35e-02 & 1.523    &   3.13e-02 & 1.860   \\
            & $80 \times 80$ &    4.55e-02 & 1.241  &    2.25e-02 & 1.709   &    6.07e-03 & 2.366  \\
            &  $160 \times 160$ &   2.02e-02 & 1.173  &  6.26e-03 & 1.844  &     1.08e-03 & 2.492 \\
            &  $320 \times 320$ &   9.36e-03 & 1.109  &  1.65e-03 & 1.926    &   1.87e-04 & 2.529   \\
            % &  640 \times 640  &    &.     &      \\
            \hline   
		\end{tabular}
	\end{small}	
	\caption{\label{tab:drumhead} $L_{\infty}$-errors and orders of accuracy for Example \ref{ex:drumhead} (1) at $T = 1$. }
\end{table}

\begin{figure}[htbp]
    \centering
    \includegraphics[width=0.9\textwidth]{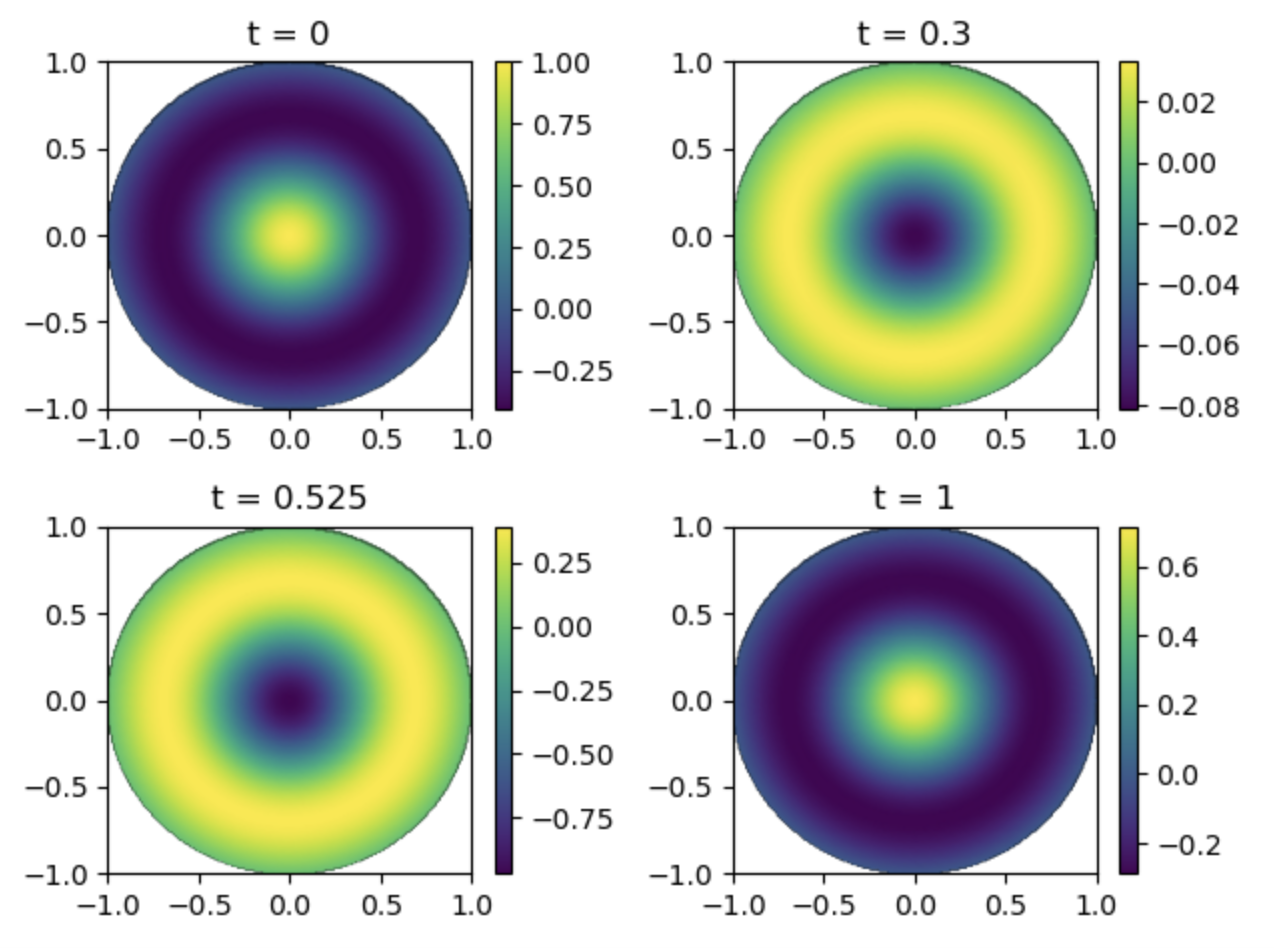}
    \caption{Numerical solution of Example \ref{ex:drumhead} (2) with different time level with third order scheme.}
    \label{fig:drumhead_numerical}
\end{figure}

\begin{figure}[htbp]
    \centering
    \includegraphics[width=0.45\textwidth]{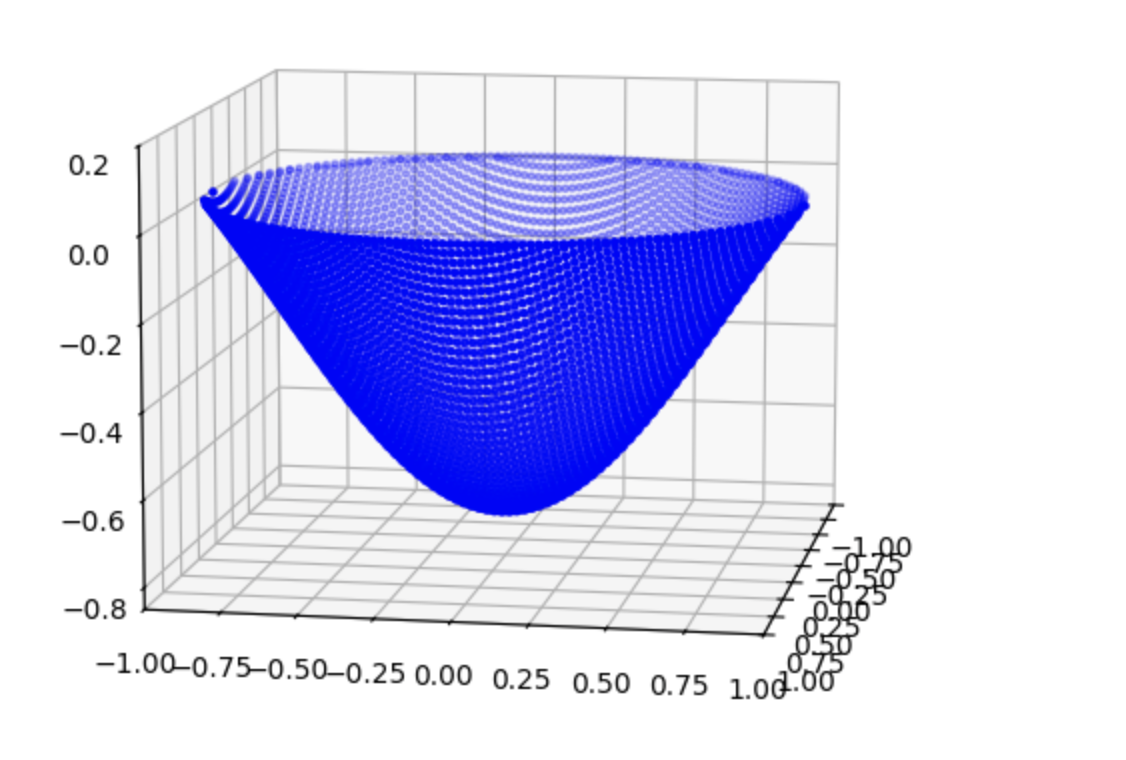}
    \includegraphics[width=0.45\textwidth]{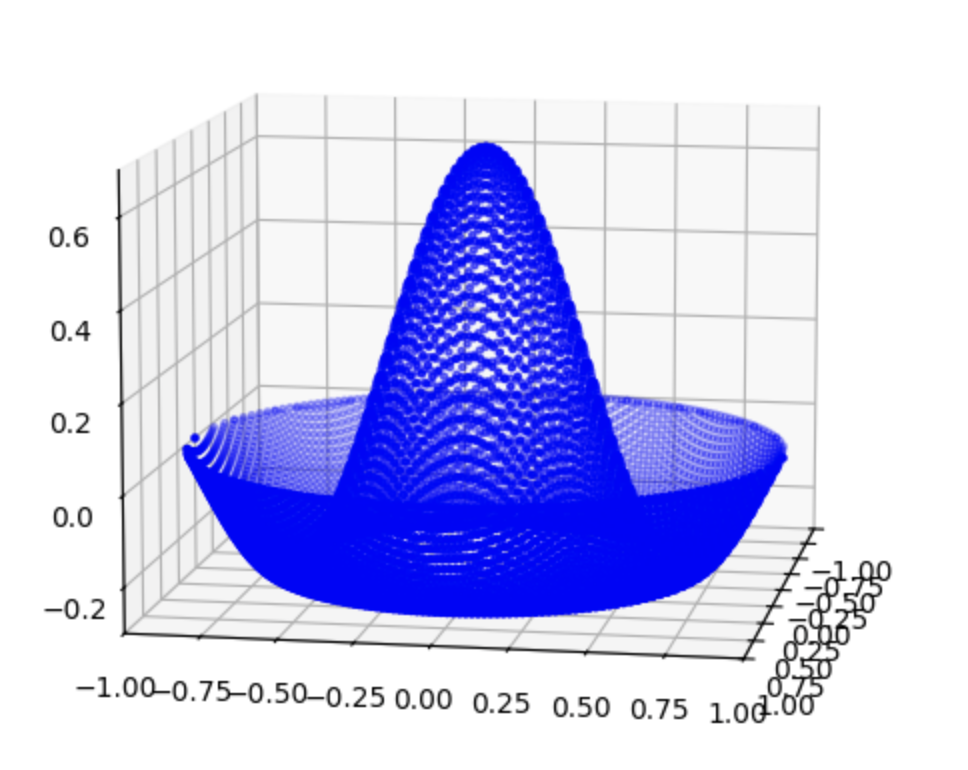}
    \caption{Numerical solution of Example \ref{ex:drumhead} (1) with the first eigen-mode (left) and Example \ref{ex:drumhead} (2) with the second eigen-mode (right) at $T = 1$.}
    \label{fig:drumhead2}
\end{figure}

\rev{
\begin{exa}\label{ex:vortex}
    We close this section by presenting a more challenging nonlinear Hamilton-Jacobian example with periodic boundary condition, which is introduced in \cite{bell1989second}. This is also a standard problem to estimate the ability of numerical schemes to resolve thin filaments. 
    % The deformation of a circle under a single vortex is a standard problem to estimate the ability of numerical schemes to resolve thin filaments. 
    \begin{align}
         & u_t + H(u_x,u_y) = 0, -0.5 \le x \le 1.5, -0.25 \le y \le 1.75,  \\
         & u(x,y,0) = \sqrt{(x-0.5)^2+(y-0.75)^2} - 0.15, \nonumber
    \end{align}
    where 
    \[
    H(u,v) = -sin^2(\pi x) sin(2 \pi y)u + sin^2(\pi y) sin(2 \pi x)v. 
    \]
\end{exa}}
\rev{In the following simulations, we apply the proposed third-order solver and present the results by plotting the contours where $u(x,y,t) = 0$ at the selected final time. Firstly, we present the results in Figure~\ref{fig:vortexT1} using a resolution of $N_x \times N_y = 256 \times 256$ at the final time $T = 1$, with CFL numbers 0.5, 1, and 2. Secondly, we perform long-time simulations up to $T = 3$, as shown in Figure~\ref{fig:vortexCFL_T}, using a finer grid with resolution $N_x \times N_y = 1024 \times 1024$ for different CFL values. It is observed that the filament structure of the model is well captured by the scheme. Although the scheme is not perfectly accurate for extremely large CFL numbers, it remains stable in all cases. Moreover, refining the mesh leads to improved results.}
\begin{figure}[htbp]
  \centering

  % Row 1
  \begin{subfigure}[b]{0.32\textwidth}
    \includegraphics[width=\linewidth]{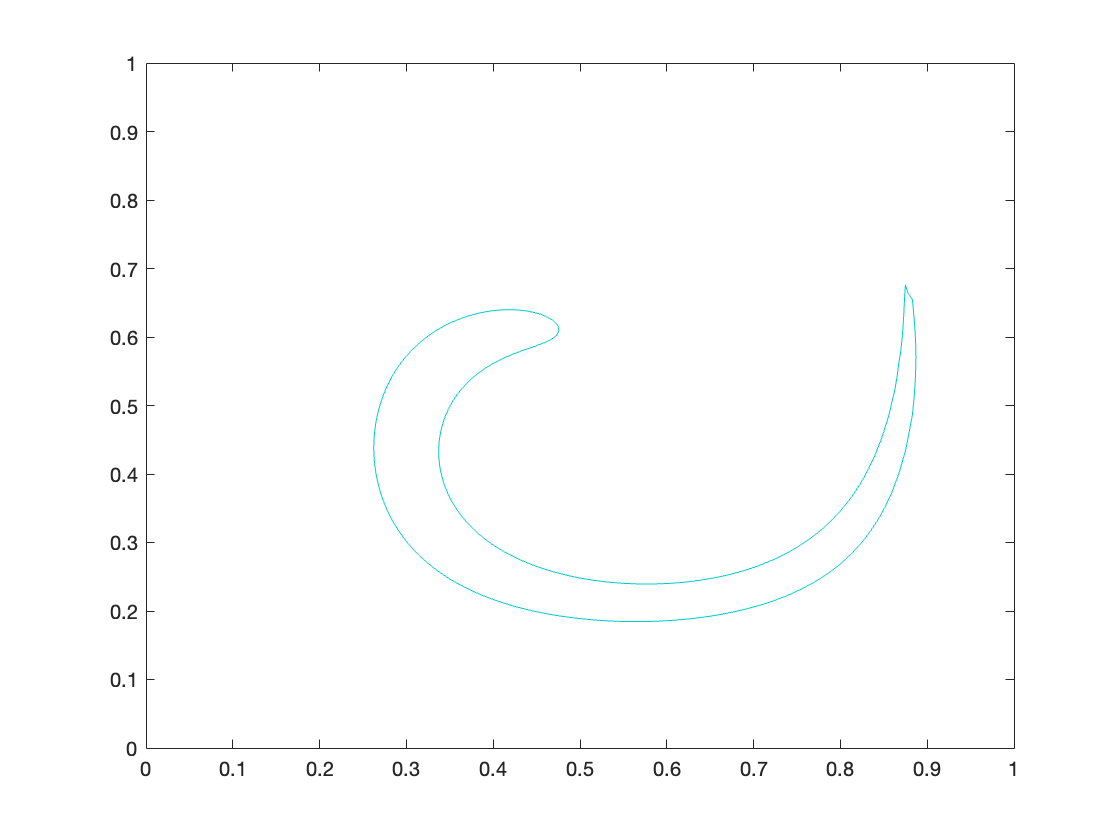}
    \caption{CFL $= 0.5$.} % 
  \end{subfigure}
  \begin{subfigure}[b]{0.32\textwidth}
    \includegraphics[width=\linewidth]{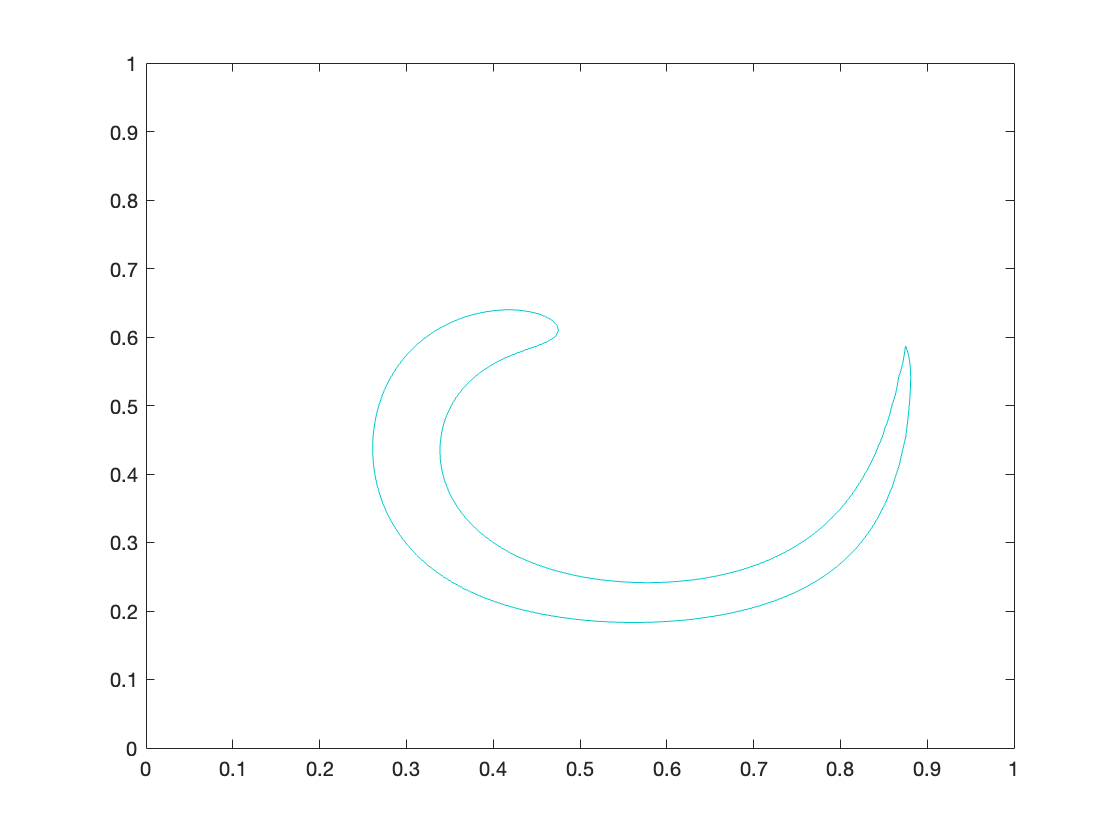}
    \caption{CFL $= 1$.}
  \end{subfigure}
  \begin{subfigure}[b]{0.32\textwidth}
    \includegraphics[width=\linewidth]{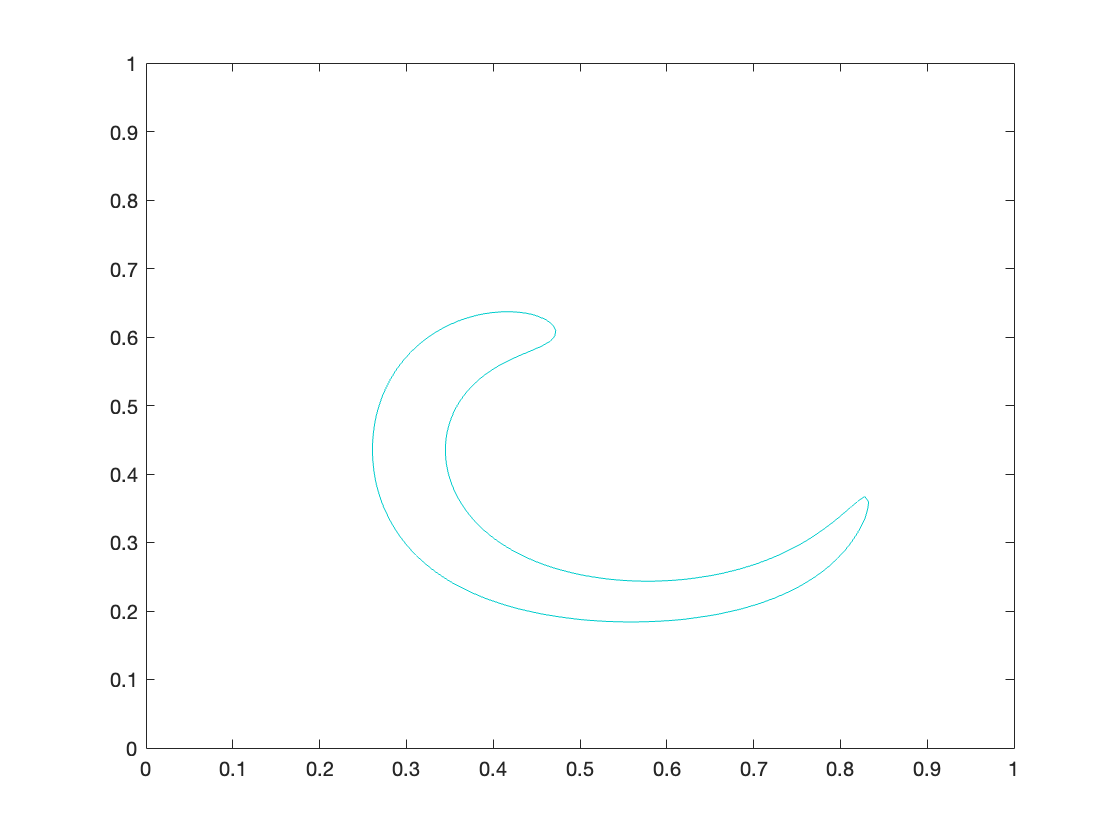}
    \caption{CFL $= 2$.}
  \end{subfigure}
  \caption{The simulations of Example \ref{ex:vortex} with different CFL for the mesh $N_x \times N_y = 256 \times 256$ at time $T = 1$.} 
  \label{fig:vortexT1}
\end{figure}
\begin{figure}[htbp]
  \centering

  % Row 1
  \begin{subfigure}[b]{0.32\textwidth}
    \includegraphics[width=\linewidth]{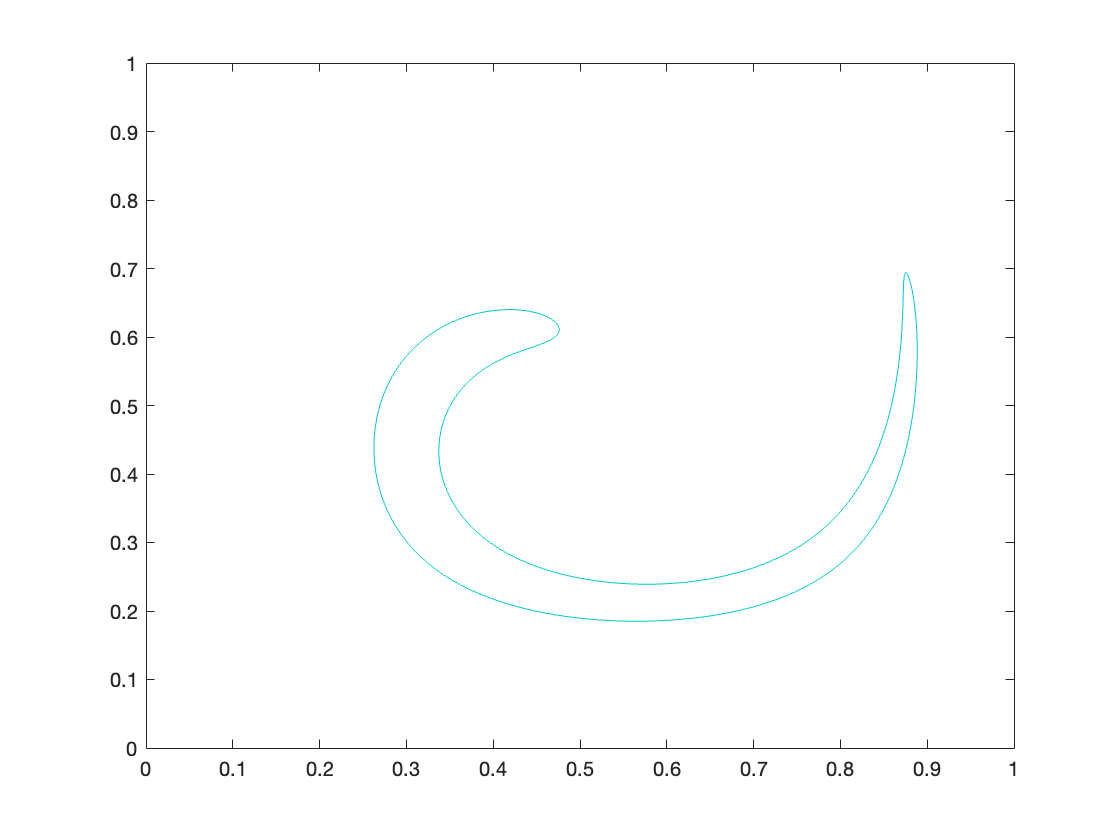}
    \caption{CFL $= 0.5$, $T = 1$}
  \end{subfigure}
  \begin{subfigure}[b]{0.32\textwidth}
    \includegraphics[width=\linewidth]{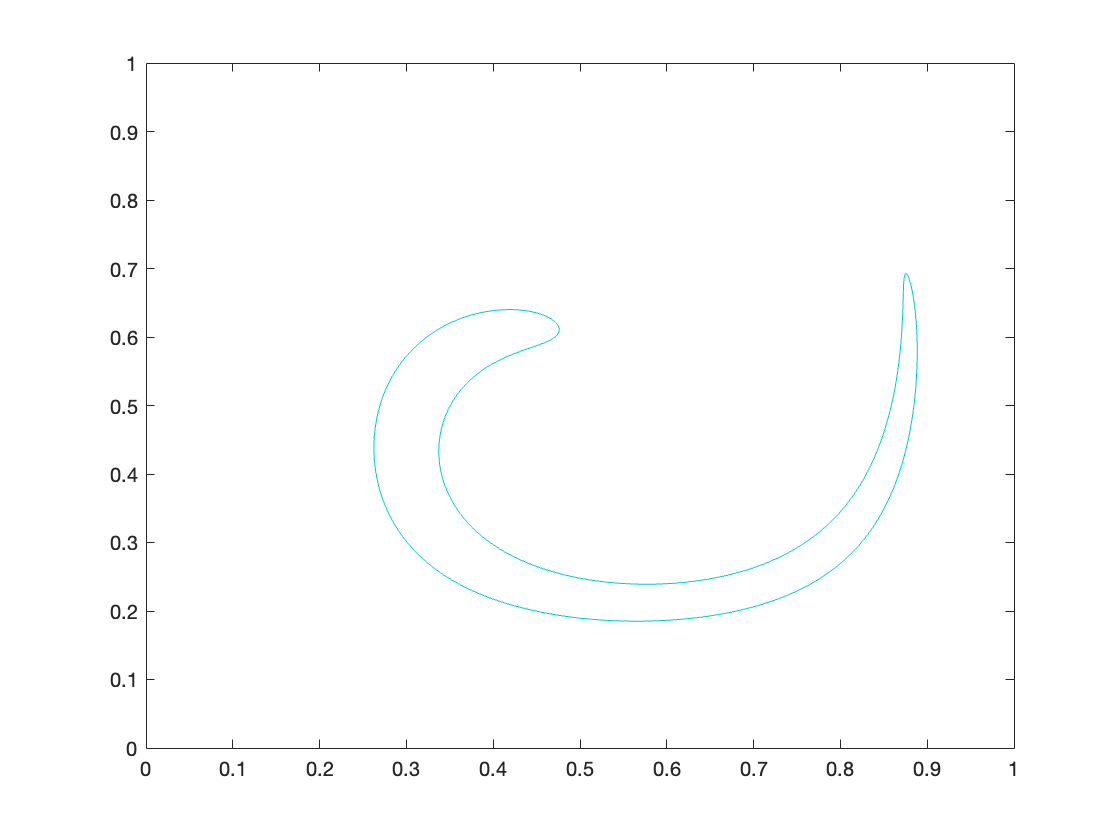}
    \caption{CFL $= 1$, $T = 1$}
  \end{subfigure}
  \begin{subfigure}[b]{0.32\textwidth}
    \includegraphics[width=\linewidth]{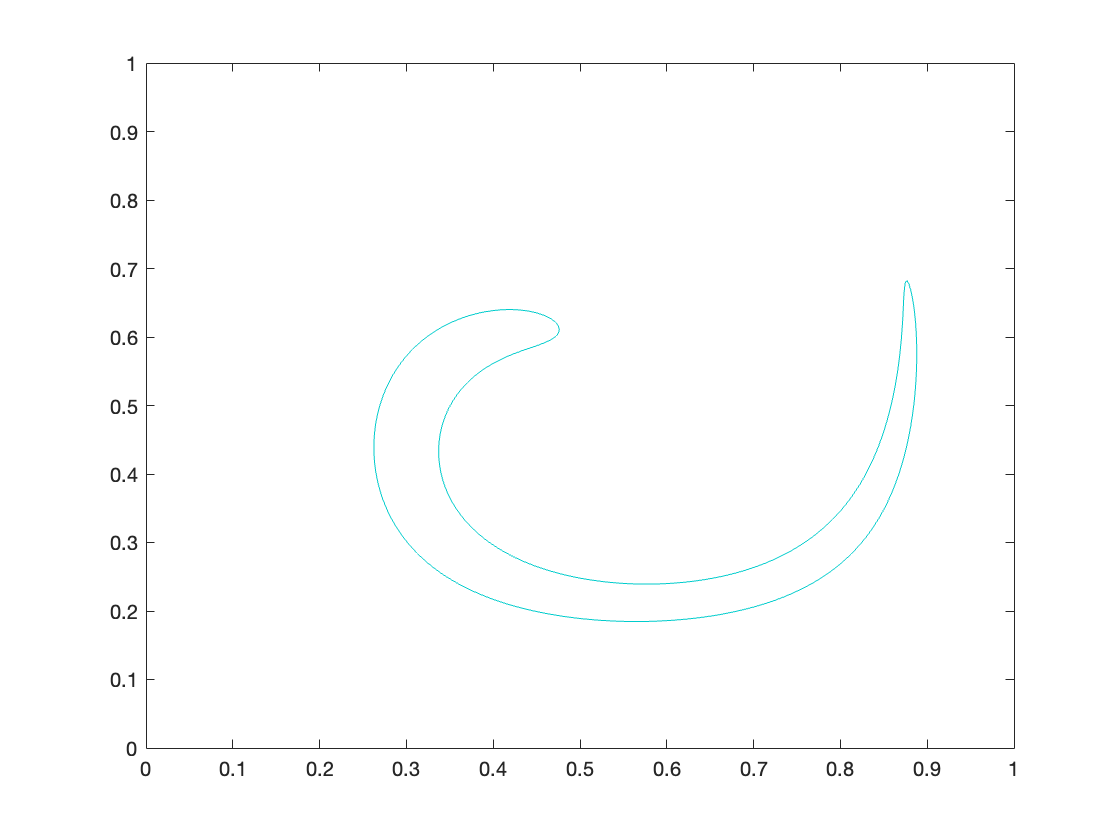}
    \caption{CFL $= 2$, $T = 1$}
  \end{subfigure}

  \par\vspace{1em}

  % Row 2
  \begin{subfigure}[b]{0.32\textwidth}
    \includegraphics[width=\linewidth]{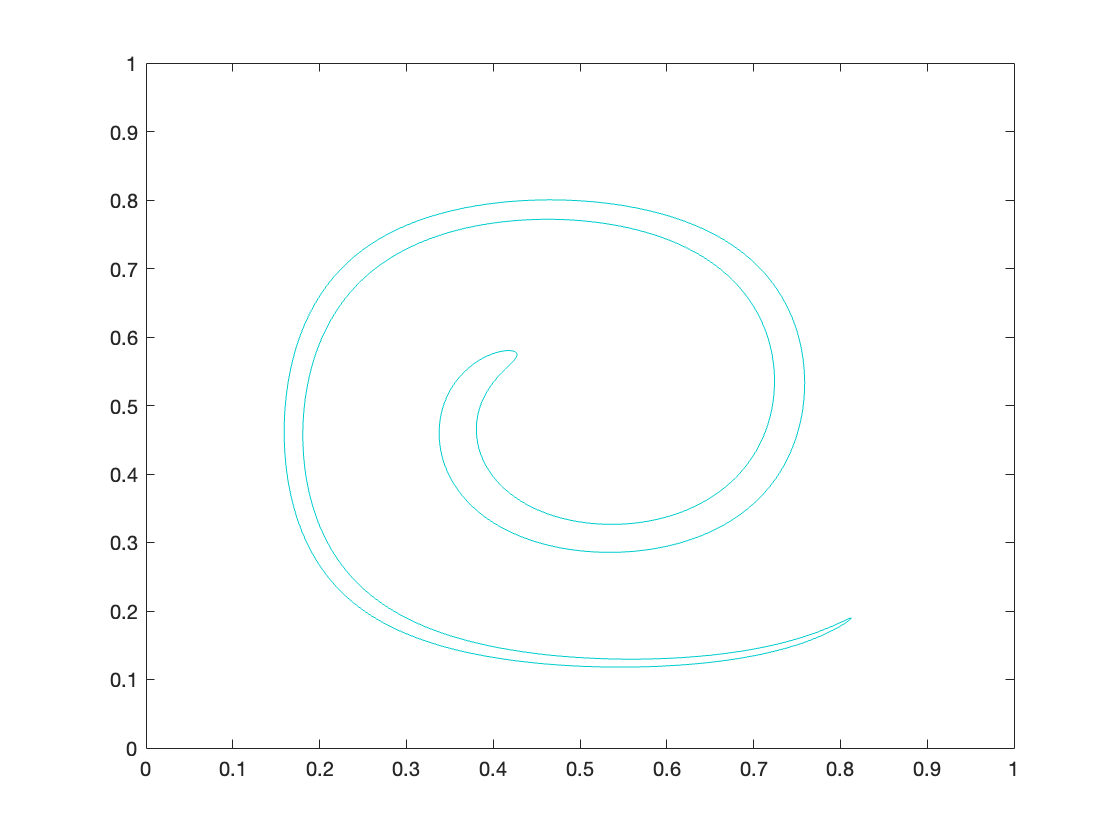}
    \caption{CFL $= 0.5$, $T = 2$}
  \end{subfigure}
  \begin{subfigure}[b]{0.32\textwidth}
    \includegraphics[width=\linewidth]{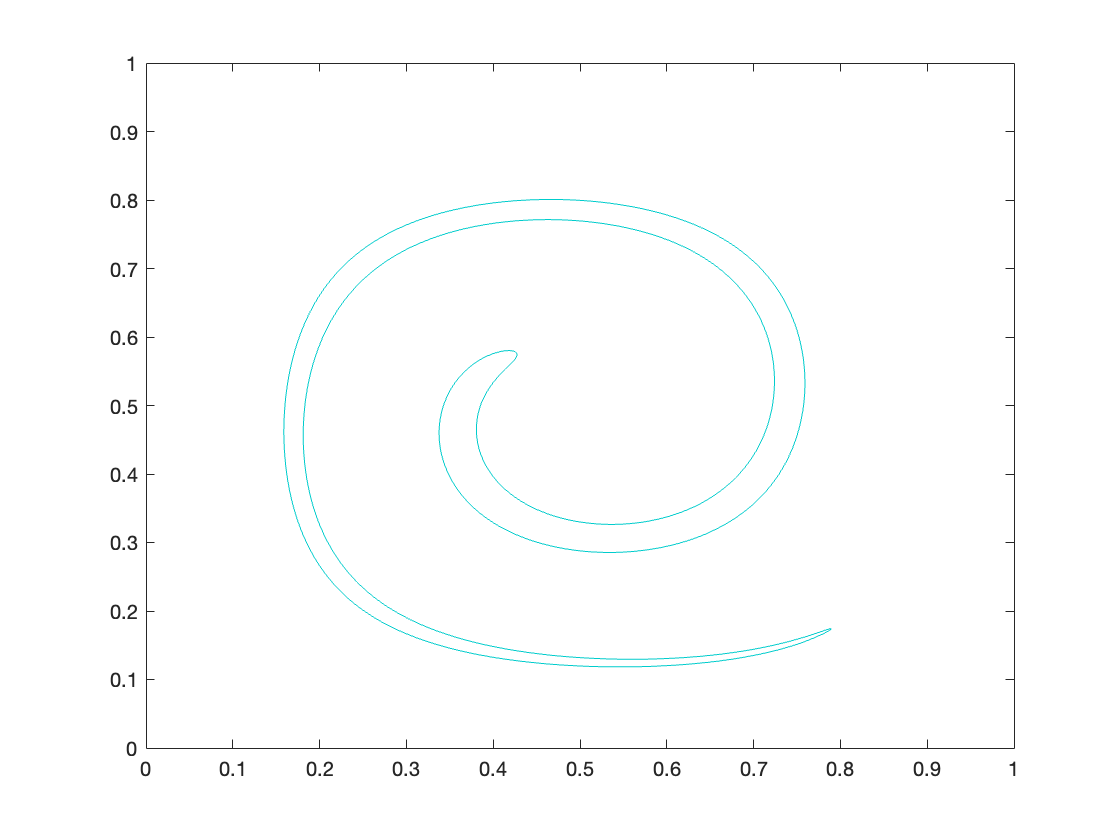}
    \caption{CFL $= 1$, $T = 2$}
  \end{subfigure}
  \begin{subfigure}[b]{0.32\textwidth}
    \includegraphics[width=\linewidth]{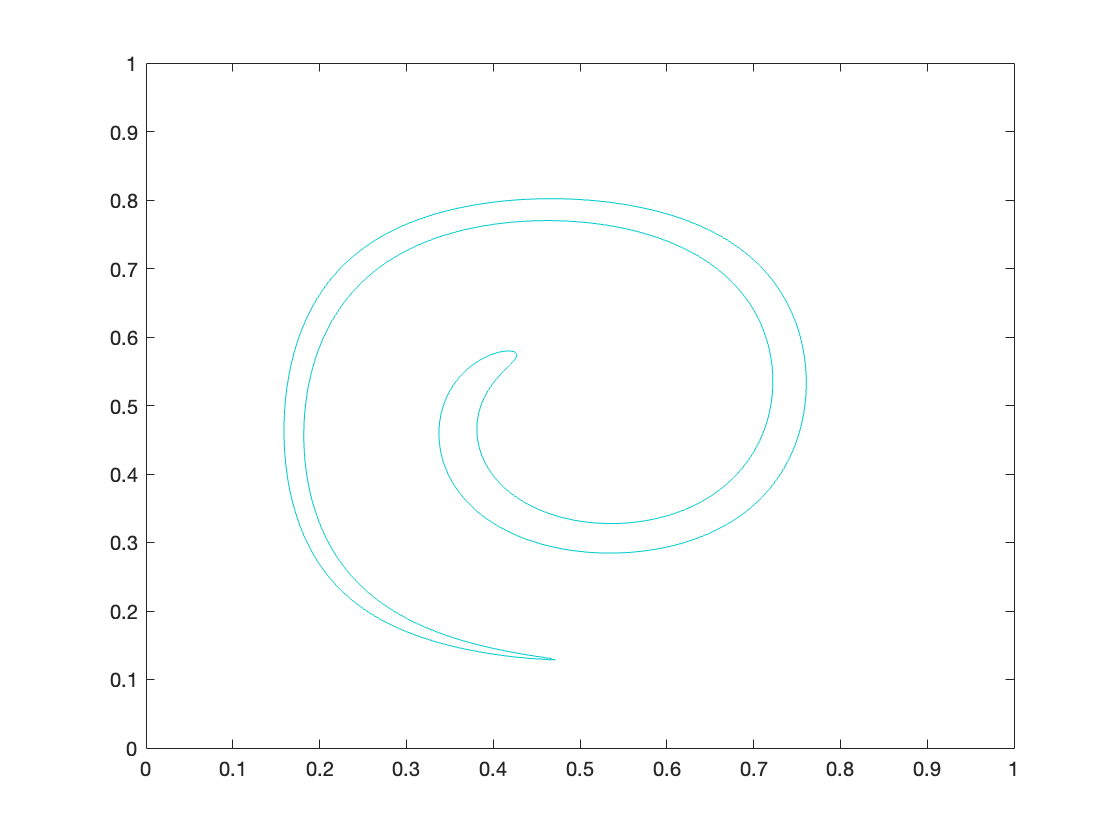}
    \caption{CFL $= 2$, $T = 2$}
  \end{subfigure}

  \par\vspace{1em}

  % Row 3
  \begin{subfigure}[b]{0.32\textwidth}
    \includegraphics[width=\linewidth]{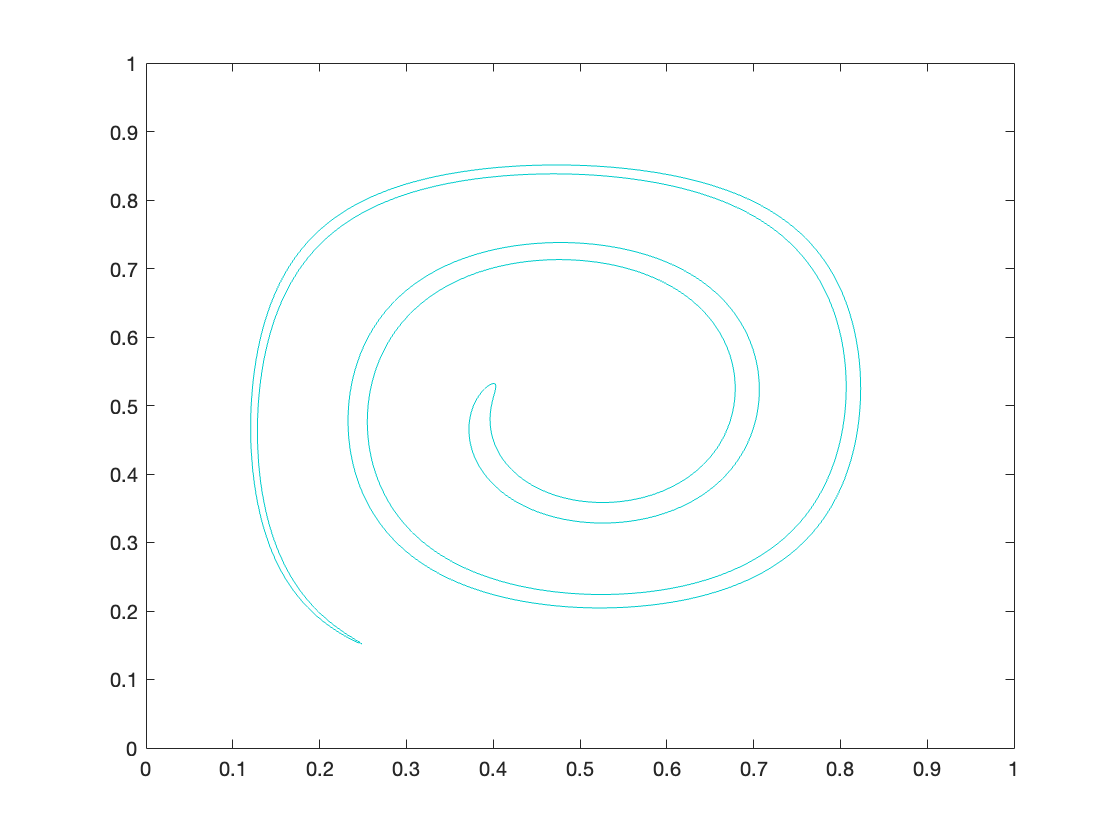}
    \caption{CFL $= 0.5$, $T = 3$}
  \end{subfigure}
  \begin{subfigure}[b]{0.32\textwidth}
    \includegraphics[width=\linewidth]{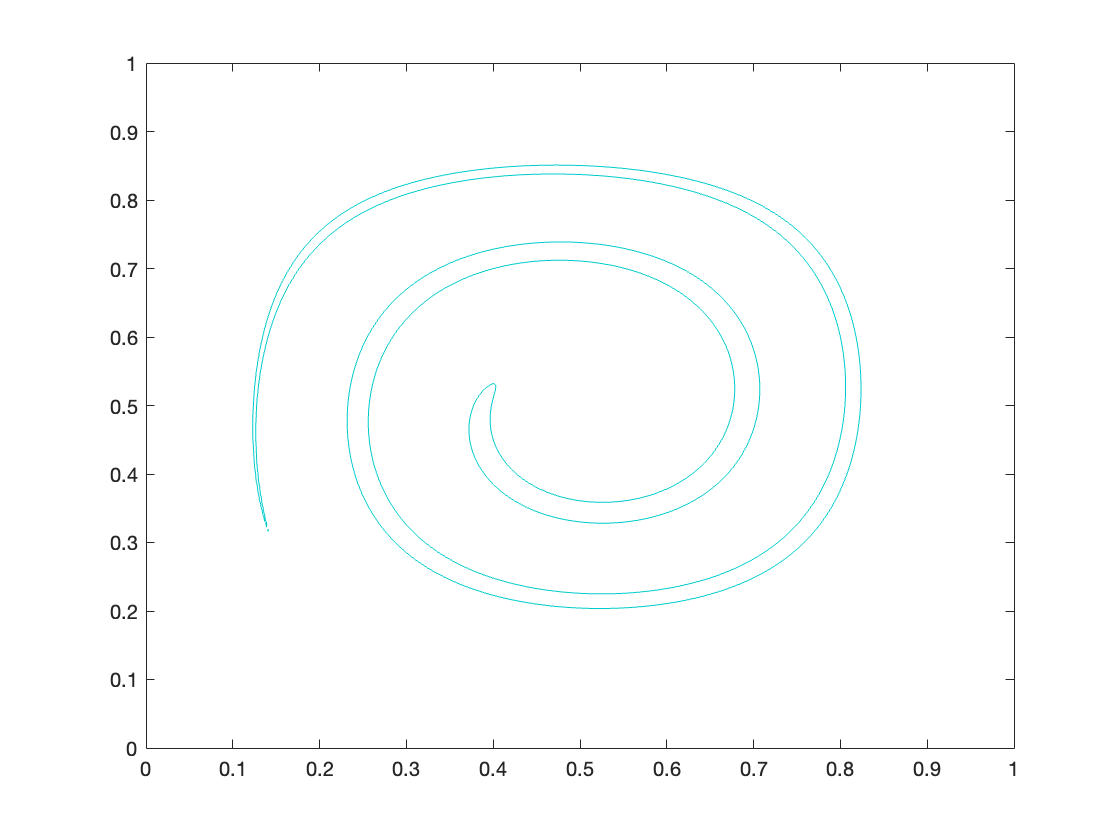}
    \caption{CFL $= 1$, $T = 3$}
  \end{subfigure}
  \begin{subfigure}[b]{0.32\textwidth}
    \includegraphics[width=\linewidth]{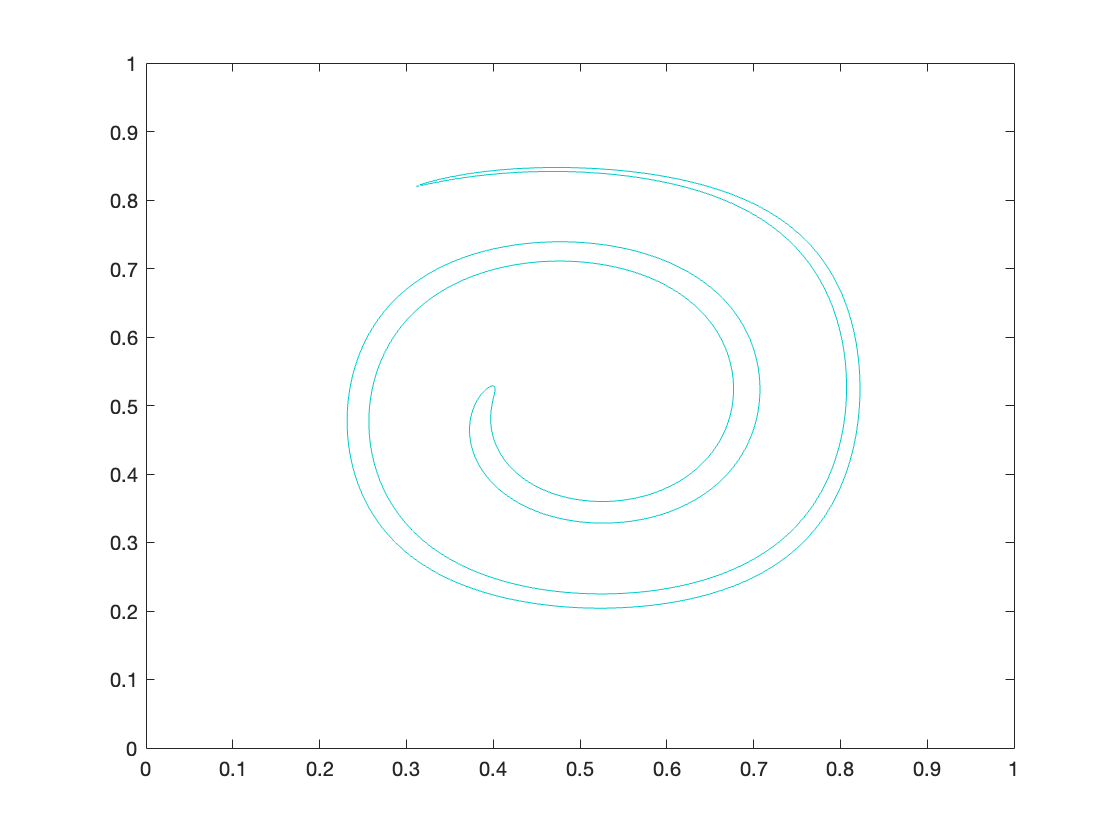}
    \caption{CFL $= 2$, $T = 3$}
  \end{subfigure}

  \caption{Simulations of Example~\ref{ex:vortex} with different CFL numbers on a $1024 \times 1024$ mesh at times $T = 1$, 2, and 3.}
  \label{fig:vortexCFL_T}
\end{figure}

\section{Conclusions}
In this paper, we proposed a novel numerical scheme to solve a variety of vital PDEs involved with first and second spatial derivatives including Hamilton-Jacobian equations, convection-diffusion equations and wave equations. Proposed method, as a generalization of previous high order kernel-based methods on periodic domains, retains their inherent advantages while extending their applicability to various types of boundary conditions. The mathematical proofs and experimental results confirm the theory, demonstrating a systematic approach to achieving high-order accuracy across various boundary conditions. Future work will include a detailed stability analysis and extend the methodology to other systems of equations, such as conservation laws, including the shallow water equations, on more complex geometries.

\bibliographystyle{abbrv}
\bibliography{ref}

@article{bell1989second,
  title={A second-order projection method for the incompressible Navier-Stokes equations},
  author={Bell, John B and Colella, Phillip and Glaz, Harland M},
  journal={Journal of computational physics},
  volume={85},
  number={2},
  pages={257--283},
  year={1989},
  publisher={Elsevier}
}

@article{shu1988eno, 
 title={Efficient implementation of essentially non-oscillatory shock capturing schemes}, 
author ={Shu, C.W. and Osher, S.},
journal = {J. Comput. Phys.},
volume={77},
year={1988},
pages = {439--471}
}

@article{shu1989eno, 
 title={Efficient implementation of essentially non-oscillatory shock capturing schemes II}, 
author ={Shu, C.W. and Osher, S.},
journal = {J. Comput. Phys.},
volume={83},
year={1989},
pages = {32--78}
}

@article{jiang1996efficient,
  title={Efficient implementation of weighted ENO schemes},
  author={Jiang, Guang-Shan and Shu, Chi-Wang},
  journal={Journal of computational physics},
  volume={126},
  number={1},
  pages={202--228},
  year={1996},
  publisher={Elsevier}
}

@article{causley2014higher,
  title={Higher order A-stable schemes for the wave equation using a successive convolution approach},
  author={Causley, Matthew F and Christlieb, Andrew J},
  journal={SIAM Journal on Numerical Analysis},
  volume={52},
  number={1},
  pages={220--235},
  year={2014},
  publisher={SIAM}
}

@article{causley2017method,
  title={Method of lines transpose: an efficient unconditionally stable solver for wave propagation},
  author={Causley, Matthew and Christlieb, Andrew and Wolf, Eric},
  journal={Journal of Scientific Computing},
  volume={70},
  number={2},
  pages={896--921},
  year={2017},
  publisher={Springer}
}

@article{causley2016method,
  title={Method of Lines Transpose: High Order L-Stable $\mathcal{O}$(N) Schemes for Parabolic Equations Using Successive Convolution},
  author={Causley, Matthew F and Cho, Hana and Christlieb, Andrew J and Seal, David C},
  journal={SIAM Journal on Numerical Analysis},
  volume={54},
  number={3},
  pages={1635--1652},
  year={2016},
  publisher={SIAM}
}

@article{christlieb2021parallel,
  title={Parallel algorithms for successive convolution},
  author={Christlieb, Andrew J and Guthrey, Pierson T and Sands, William A and Thavappiragasm, Mathialakan},
  journal={Journal of Scientific Computing},
  volume={86},
  number={1},
  pages={1},
  year={2021},
  publisher={Springer}
}

@article{thavappiragasam2020fast,
  title={A fast local embedded boundary method suitable for high power electromagnetic sources},
  author={Thavappiragasam, Mathialakan and Christlieb, Andrew and Luginsland, John and Guthrey, Pierson},
  journal={AIP Advances},
  volume={10},
  number={11},
  year={2020},
  publisher={AIP Publishing}
}

@article{HKYY,
	title={Sixth-order weighted essentially nonoscillatory schemes based on exponential polynomials},
	author={Ha, Youngsoo and Kim, Chang Ho and Yang, Hyoseon and Yoon, Jungho},
	journal={SIAM Journal on Scientific Computing},
	volume={38},
	pages={1987--2017},
	year={2016},
	publisher={SIAM}
}

@article{christlieb2019kernel,
	title={{A kernel based high order “explicit” unconditionally stable scheme for time dependent Hamilton--Jacobi equations}},
	author={Christlieb, Andrew and Guo, Wei and Jiang, Yan},
	journal={Journal of Computational Physics},
	volume={379},
	pages={214--236},
	year={2019},
	publisher={Elsevier}
}

@article{jia2008krylov,
  title={Krylov deferred correction accelerated method of lines transpose for parabolic problems},
  author={Jia, Jun and Huang, Jingfang},
  journal={Journal of Computational Physics},
  volume={227},
  number={3},
  pages={1739--1753},
  year={2008},
  publisher={Elsevier}
}

@article{gottlieb2001strong,
	title={Strong stability-preserving high-order time discretization methods},
	author={Gottlieb, Sigal and Shu, Chi-Wang and Tadmor, Eitan},
	journal={SIAM review},
	volume={43},
	number={1},
	pages={89--112},
	year={2001},
	publisher={SIAM}
}

@article{salazar2000theoretical,
	title={Theoretical analysis of the Exponential Transversal Method of Lines for the diffusion equation},
	author={Salazar, AJ and Raydan, M and Campo, A},
	journal={Numerical Methods for Partial Differential Equations},
	volume={16},
	number={1},
	pages={30--41},
	year={2000}
}

@article{schemann1998adaptive,
	title={An adaptive Rothe method for the wave equation},
	author={Schemann, Martin and Bornemann, Folkmar A},
	journal={Computing and Visualization in Science},
	volume={1},
	number={3},
	pages={137--144},
	year={1998},
	publisher={Springer}
}

@article{causley2014method,
	title={Method of lines transpose: An implicit solution to the wave equation},
	author={Causley, Matthew and Christlieb, Andrew and Ong, Benjamin and Van Groningen, Lee},
	journal={Mathematics of Computation},
	volume={83},
	number={290},
	pages={2763--2786},
	year={2014}
}

@article{causley2013method,
	title={Method of lines transpose: A fast implicit wave propagator},
	author={Causley, Matthew F and Christlieb, Andrew J and Guclu, Yaman and Wolf, Eric},
	journal={arXiv preprint arXiv:1306.6902},
	year={2013}
}

@article{christlieb2016weno,
	title={{A WENO-based Method of Lines Transpose approach for Vlasov simulations}},
	author={Christlieb, Andrew and Guo, Wei and Jiang, Yan},
	journal={Journal of Computational Physics},
	volume={327},
	pages={337--367},
	year={2016},
	publisher={Elsevier}
}

@article{christlieb2020,
title = {Kernel Based High Order “Explicit” Unconditionally Stable Scheme for Nonlinear Degenerate Advection-Diffusion Equations},
journal = {Journal of Scientific Computing },
volume = {82},
pages = {52},
year = {2020},
author = {Andrew Christlieb and Wei Guo and Yan Jiang and Hyoseon Yang}
}

@article{jiang2000weighted,
	title={Weighted {ENO} schemes for {H}amilton--{J}acobi equations},
	author={Jiang, Guang-Shan and Peng, Danping},
	journal={SIAM Journal on Scientific computing},
	volume={21},
	number={6},
	pages={2126--2143},
	year={2000},
	publisher={SIAM}
}

@article{christlieb2020kernel,
  title={A Kernel-Based explicit unconditionally stable scheme for Hamilton-Jacobi equations on nonuniform meshes},
  author={Christlieb, Andrew and Sands, William and Yang, Hyoseon},
  journal={Journal of Computational Physics},
  volume={415},
  pages={109543},
  year={2020},
  publisher={Elsevier}
}

@book{asmar2016partial,
  title={Partial differential equations with Fourier series and boundary value problems},
  author={Asmar, Nakhl{\'e} H},
  year={2016},
  publisher={Courier Dover Publications}
}

@article{marti2024efficient,
  title={Efficient WENO schemes for nonuniform grids},
  author={Mart{\'\i}, M Carmen and Mulet, Pep and Y{\'a}{\~n}ez, Dionisio F and Zor{\'\i}o, David},
  journal={Journal of Scientific Computing},
  volume={100},
  number={1},
  pages={6},
  year={2024},
  publisher={Springer}
}

\section*{Statements and Declarations}
\subsection*{Funding:} The research of the authors was supported by AFOSR grants FA9550-17-1-0394 and NSF grant DMS-1912183. The first two authors were also supported by AFOSR grants FA9550-24-1-0254, ONR grant N00014-24-1-2242,  and DOE grant DE-SC0023164. The third author was also supported by the National Research Foundation of Korea(NRF) grant funded by the Korea government(MSIT) NRF-2022R1F1A1066389.
 
\subsection*{Competing Interests:} The authors have no relevant financial or non-financial interests to disclose.
\subsection*{Data availability:} All data generated or analyzed during this study are included in this article.

\end{document}